\newcounter{intro}
\newtheorem{intro-conjecture}[intro]{Conjecture}
\newtheorem{intro-corollary}[intro]{Corollary}
\newtheorem{intro-theorem}[intro]{Theorem}
\newtheorem{intro-proposition}[intro]{Proposition}
\theoremstyle{plain}
\newtheorem{theorem}{Theorem}[section]
\newtheorem{lemma}[theorem]{Lemma}
\newtheorem{lemma-def}[theorem]{Lemma-Definition}
\newtheorem{proposition}[theorem]{Proposition}
\newtheorem{corollary}[theorem]{Corollary}
\theoremstyle{definition}
\newtheorem{definition}[theorem]{Definition}
\theoremstyle{remark}
\newtheorem{remark}[theorem]{Remark}
\newtheorem*{remark*}{Remark}
\newtheorem{example}[theorem]{Example}
\newtheorem*{example*}{Example}
\newtheorem{question}[theorem]{Question}
\newtheorem{problem}[theorem]{Problem}
\newcommand{\Rom}[1]{\uppercase\expandafter{\romannumeral#1}}
\let\oldu\u
\def\u{\mathfrak u}
\newcommand{\E}{\mathcal E}
\newcommand{\F}{\mathcal F}
\newcommand{\G}{\mathcal G}
\renewcommand{\H}{\mathcal H}
\newcommand{\I}{\mathcal I}
\newcommand{\K}{\mathcal K}
\newcommand{\N}{\mathcal N}
\renewcommand{\O}{\mathcal O}
\newcommand{\R}{\mathcal R}
\newcommand{\X}{\mathcal X}
\newcommand{\NK}{\mathcal N \mathcal K} 
\newcommand{\bR}{{\bf R}}
\renewcommand{\AA}{\mathbb A}
\newcommand{\CC}{\mathbb C}
\newcommand{\HH}{\mathbb H}
\newcommand{\LL}{\mathbb L}
\newcommand{\NN}{\mathbb N}
\newcommand{\PP}{\mathbb P}
\newcommand{\QQ}{\mathbb Q}
\DeclareMathOperator{\coker}{coker}
\newcommand{\xto}{\xrightarrow} 
\newcommand{\blank}{{-}} 
\newcommand*{\sHom}{\mathscr{H}\kern -.5pt om}
\newcommand{\DB}{\underline{\Omega}} 
\DeclareMathOperator{\Gr}{Gr}
\DeclareMathOperator{\DR}{DR}
\DeclareMathOperator{\Cone}{Cone}
\DeclareMathOperator{\tors}{tors}
\newcommand{\dR}{\widehat{\rm dR}}
\DeclareMathOperator{\sing}{sing}
\DeclareMathOperator{\supp}{supp}
\DeclareMathOperator{\Spec}{Spec}
\DeclareMathOperator{\Pic}{Pic}
\DeclareMathOperator{\codim}{codim}
\DeclareMathOperator{\depth}{depth}
\DeclareMathOperator{\gr}{gr}
\DeclareMathOperator{\an}{an}
\DeclareMathOperator{\Fil}{Fil}
\newcommand{\HC}{{\bf HC}}
\newcommand{\Mustata}{Musta\c{t}\oldu{a}}
\DeclarePairedDelimiter\ceil{\lceil}{\rceil}
\DeclarePairedDelimiter\floor{\lfloor}{\rfloor}
\DeclarePairedDelimiter\abs{\lvert}{\rvert}
\let\oldabs\abs
\def\abs{\@ifstar{\oldabs}{\oldabs*}}
\theoremstyle{definition} 
\newcommand{\thistheoremname}{}
\newtheorem*{genericthm}{\thistheoremname}
\title{On Higher Du Bois Singularities and $K$-Regularity}
\author{Wanchun Shen}
\address{Department of Mathematics, Harvard University, 1 Oxford Street, Cambridge, MA 02138, USA}
\email{wshen@math.harvard.edu}
\date{}
\begin{document}

\begin{abstract}
    We study the relationship between higher Du Bois singularities and $K$-regularity, a notion that measures the $\AA^1$-invariance of the algebraic $K$-groups. Building on this relationship, we establish a strengthened form of Vorst’s conjecture for local complete intersections in characteristic zero. Our work also provides tools to construct new examples that illustrate various phenomena in the study of $K$-regularity. The main inputs for our results are vanishing theorems for the Du Bois complexes.
\end{abstract}

\subjclass[2020]{14B05, 19E08}

\keywords{higher Du Bois singularities, $K$-regularity, Vorst's conjecture, Du Bois complex, minimal exponent}

\maketitle
\tableofcontents

Throughout, we assume $F$ is a field of characteristic zero. By a variety, we mean a reduced, separated scheme of finite type over $F$.

\section{Introduction}
Understanding the $\AA^1$-invariance of $K$-groups is a central problem in algebraic $K$-theory. A natural way to quantify the failure of $\AA^1$-invariance is by studying the cokernels of the maps
\[\varphi_{m,r}: K_m(X)\to K_m(X\times \AA^r)\]
for all integers $m$ and $r\ge 0$. A scheme $X$ is said to be \textit{$K_m$-regular} if the maps $\varphi_{m,r}$ are isomorphisms for all $r\ge 0$. It is a theorem of Quillen that regular schemes are $K_m$-regular for all $m$. In general, the extent to which $K_m(X)$ fails to be $\AA^1$-invariant provides a measure of the singularities of $X$, and this is captured in the notion of $K$-regularity. 

This paper is devoted to applications of Hodge-theoretic methods to the study of $K$-regularity. As an illustration of the kind of results obtained by these methods, we highlight a few consequences, the first one being a numerical characterization of $K_m$-regularity in terms of the \textit{minimal exponent} $\tilde{\alpha}_X$ introduced by Saito \cite{Saito-min-exponent}. See e.g. \cite{Popa-lecture-notes}*{Section 3.7} for a nice introduction to this numerical invariant that refines the log canonical threshold of $X$:

\begin{intro-theorem}\label{intro-thm:char-of-K-regularity}
    Let $X$ be a complex hypersurface of dimension $d$ with isolated singularities. Then
    \[X \text{ is $K_m$-regular} \iff \tilde{\alpha}_X \ge \ceil{\frac{m+d}{2}}.\] 
\end{intro-theorem}

The following example illustrates the use of Theorem \ref{intro-thm:char-of-K-regularity} to detect $K$-regularity:

\begin{example*}
    Consider the hypersurface $X = (x^2+y^3+z^7=0)\subset \AA^{3}_{\CC}$. By \cite{Saito-homogeneous-min-exp}*{4.1.5}, the minimal exponent of $X$ is given by $\tilde{\alpha}_X = \frac{1}{2}+\frac{1}{3}+ \frac{1}{7}=\frac{41}{42}$. Thus, Theorem \ref{intro-thm:char-of-K-regularity} says that
    \begin{align*}
        \text{$X$ is $K_m$-regular} 
        &\iff \frac{41}{42}\ge \ceil{\frac{m+2}{2}} \\
        &\iff m\le -2
    \end{align*}
    Thus $X$ is $K_{-2}$-regular, but not $K_m$-regular for $m\ge -1$. This recovers \cite{Weibel-surface}*{Example 6.1}.
\end{example*}

Moreover, we have the following regularity criterion:

\begin{intro-theorem}\label{intro-thm:regularity}
Let $X$ be an affine local complete intersection. If $X$ is $K_2$-regular, then it is regular.
\end{intro-theorem}

Interest in regularity criteria of this kind goes back to the work of Vorst \cite{Vorst}, who conjectured that for an affine, finite type $F$-scheme of dimension $d$, regularity and $K_{d+1}$-regularity are equivalent. This conjecture was confirmed in \cite{K-Vorst} (as usual, we assume the base field has characteristic zero; over perfect fields of characteristic $p$, Vorst's conjecture was confirmed in \cite{Vorst-char-p}). In \cite{K-DB}, two extensions of Vorst's conjecture were established. First, it was shown that $K_2$-regular affine varieties are normal. Second, it was shown that for local complete intersections of minimal embedding codimension $r$ (see Definition \ref{def:minimal-embedding-codim}), regularity is implied by $K_m$-regularity for $m> \frac{d-r}{2}+1$. Our theorem may be viewed as a further strengthening of Vorst’s conjecture along these lines.

\vspace \medskipamount

The proof of Theorem~\ref{intro-thm:char-of-K-regularity} and Theorem \ref{intro-thm:regularity}, along with other applications discussed later in the paper, combines fundamental results of Corti\~nas, Haesemeyer, Schlichting, Walker and Weibel \cites{K-Weibel,K-Vorst,K-Bass}—now standard in the study of $K$-regularity—with a new ingredient: the theory of \textit{higher Du Bois singularities}.

Recently, developments in Hodge theory have led to considerable interest in the study of higher analogs of the classical notion of Du Bois singularities for complex algebraic varieties. Following \cites{JKSY,MOPW,MP-lci}, a local complete intersection $X$ is said to have \textit{$m$-Du Bois singularities} if the natural maps
\[\Omega_{X/\CC}^p \to \DB_{X/\CC}^p,\]
from the sheaf of algebraic differential forms to the Du Bois complexes (which are natural objects in the bounded derived category $D^b_{\rm coh}(X)$ of coherent sheaves on $X$, arising from Deligne's mixed Hodge theory) are isomorphisms for all $p\le m$.

For varieties that are not local complete intersections, there is currently no known example satisfying $\Omega_{X/\CC}^1\simeq \DB_{X/\CC}^1$. For this reason, one works with a less restrictive notion for general complex algebraic varieties. Following \cite{SVV}, we say $X$ has \textit{pre-$m$-Du Bois singularities} if 
\[\H^i\DB_{X/\CC}^p = 0\text{ for all $i>0$ and $p\le m$},\]
where we write $\H^i \F$ for the $i$-th cohomology sheaf of an object $\F\in D^b_{\rm coh}(X)$. Several other conditions are imposed in \textit{loc. cit.} for the definition of $m$-Du Bois singularities in general. See Section \ref{sec:DB-definition} for the natural extensions of these notions over more general base fields. 

\vspace \medskipamount
The following theorem relates higher Du Bois singularities to $K$-regularity. A related result can be found in \cite{K-DB}, where it is shown that $K_m$-regularity implies the maps $\Omega_X^p\to \DB_X^p$ are isomorphisms for $p\le m-1$.

\begin{intro-theorem}\label{intro-thm:relation-lci}
    Let $X$ be a variety of dimension $d$, and denote $s=\dim X_{\sing}$. Then
    \begin{enumerate}
        \item If $X$ has Du Bois singularities, then it is $K_{-d+1}$-regular;
        \item If $X$ is pre-$m$-Du Bois, then it is $K_{-t}$-regular for $t\ge \max\{s+1, d-2m-2+s\}$; if $X$ is in addition affine, then the bound can be improved to $t\ge \max\{1, d-2m-2\}$.
        \item If $X$ is affine, and either of the following holds:
        \begin{itemize}
            \item $X$ is a local complete intersection;
            \item $\depth \H^0\DB_X^p\ge d-p$ for all $p\ge 0$.
        \end{itemize}
        If $X$ is $K_{-d+2m+1+s}$-regular, then it is pre-$m$-Du Bois.
    \end{enumerate}
\end{intro-theorem}

This theorem, combined with results from the theory of higher Du Bois singularities, most notably the characterization via the minimal exponent \cites{MOPW,JKSY} and the codimension bound on the singular locus (Theorem \ref{thm:MP-codim-bound}), serves as the main technical input in the proofs of Theorems \ref{intro-thm:char-of-K-regularity} and \ref{intro-thm:regularity}. In addition, parts (1) and (2) produce new classes of $K_m$-regular varieties, both in the local complete intersection setting and beyond; see Example \ref{ex: non-lci-K-regular}.

\vskip \medskipamount
We conclude with two further applications of Hodge-theoretic methods to $K$-regularity. First, following \cites{K-Bass,K-Bass-negative}, we use results on Du Bois invariants (Proposition \ref{prop:DB-invariants}) to obtain new examples in which Bass' question about whether $\AA^1$-invariance of the algebraic $K$-groups implies $\AA^2$-invariance has either a positive or a negative answer; see Section \ref{sec:DB-table}.

Second, we use the degeneration of the Hodge-to-de Rham spectral sequence to characterize $K$-regularity for projective varieties:

\begin{intro-theorem}\label{intro-thm:CHWW-projective}
    Let $X$ be a complex projective variety. Then $X$ is $K_m$-regular if and only if the natural maps
    \[\HH^i(X,\LL_{X/\CC}^p)\xto{\sim} \HH^i(X, \DB_{X/\CC}^p)\]
    are isomorphisms for all $i-p\ge -m+1$. Here, we denote by $\HH^i(X, -)$ the hypercohomology functor on $X$, and by $\LL^p_{X/\CC}$ the $p$-th derived wedge product of the cotangent complex; see Section \ref{subsec:derived-dR}.
\end{intro-theorem}

This leads to examples showing that local $K_m$-regularity need not imply $K_m$-regularity; see Example \ref{ex:locally-K-regular-does-not-imply-K-regular}.

\vskip \medskipamount

\noindent\textbf{Acknowledgement.} I would like to express my sincere gratitude to Mihnea Popa for his constant support and encouragement during the preparation of this paper, and for raising a question in the AIM workshop on higher Du Bois and higher rational singularities, which led to Section \ref{sec:DB-table}. I am extremely grateful to Elden Elmanto, who introduced me to the work of \cite{K-Weibel}, and to Dori Bejleri and Anh Duc Vo for the discussions that shaped the material presented in Section \ref{sec:K-reg-for-proj-var}. I am also thankful to Anh Duc Vo and Xinyu Fang for pointing out mistakes in an earlier draft of this paper. I am indebted to Bradley Dirks, from whom I learned about Theorem \ref{thm:codim-bound}; and to Ming Hao Quek, who helped me work out the examples in Proposition \ref{prop:DB-invariants}. I thank Toni Annala, Haoyang Liu, Sung Gi Park and Noah Riggenbach for helpful conversations, and Chuck Weibel for comments on an earlier version of this paper. I am also grateful to an anonymous referee for a careful reading of the paper and for many helpful comments.

\section{Preliminaries}
\subsection{The cdh-differentials and the Du Bois complexes}
Let $X$ be a Noetherian scheme over a field $F$. The \textit{cdh differentials} of $X$ are defined as the sheafification of the K\"ahler differentials $\Omega_{X/F}^p$ in the cdh-topology introduced by Suslin and Voevodsky \cite{SV}. More precisely, 
\[\Omega_{cdh, X/F}^p := \bR a_*a^*\Omega_{X/F}^p,\]
where $a: X_{cdh}\to X_{zar}$ is the natural change-of-topology morphism from the cdh site to the Zariski site.

On the other hand, for a complex algebraic variety (i.e. when $F=\CC$), Du Bois \cite{DB} introduced the notion of \textit{Du Bois complexes} (or \textit{filtered de Rham complexes}) following ideas of Deligne. The same construction applies to define the Du Bois complexes of any quasi-excellent Noetherian scheme $X$ over a field $F$ of characteristic zero. Indeed, by Hironaka's theorems, the assumptions on $X$ guarantee the existence of a hyperresolution $\epsilon_{\bullet}: X_{\bullet}\to X$. We then define the \textit{p-th Du Bois complex} of $X$ over $F$ as
\[\DB_{X/F}^p:= \bR \epsilon_{\bullet *} \Omega^p_{X_{\bullet}/F}.\]

We refer the readers to \cite{GNPP}*{Chapter V} and \cite{PS}*{Chapter 7.3} for the construction of hyperresolutions, and for the definition of derived pushforward from a hyperresolution. Being natural objects arising from Deligne's mixed Hodge theory, the Du Bois complexes are mostly studied for complex algebraic varieties. In this case, the Du Bois complexes $\DB_{X/\CC}^p$ in our notations are usually denoted simply as $\DB_X^p$ in the literature.

The relationship between the Du Bois complexes and the cdh-differentials is observed in \cite{K-Bass-negative}*{Lemma 2.1}:

\begin{lemma}\label{lemma:DB-cdh}
    Let $X$ be a finite type $F$-scheme, and let $k\subset F$ be a subfield. Then for $p\ge 0$, there is an isomorphism
    \[\DB_{X/k}^p\simeq \Omega_{cdh, X/k}^p.\]
    \begin{proof}
        When $k=F$, this is exactly \cite{K-Bass-negative}*{Lemma 2.1}. For $k\subset F$, the result follows by the same proof, using \cite{K-Vorst}*{Corollary 2.5} and \cite{SV}*{Lemma 12.1}.
    \end{proof}
\end{lemma}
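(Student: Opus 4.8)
The plan is to exhibit both $\DB_{X/k}^p$ and $\Omega_{cdh,X/k}^p$ as objects reconstructed by cdh-descent from their restrictions to smooth schemes, on which both reduce to ordinary Kähler differentials, and to leverage the absolute case $k=F$ of \cite{K-Bass-negative}*{Lemma 2.1}. For the absolute case itself I would argue as follows. First, the natural map $\Omega_{X/F}^p\to \DB_{X/F}^p$ is an isomorphism on smooth $F$-schemes, since a smooth (hence regular) scheme is its own hyperresolution and so has Du Bois complex $\Omega_{X/F}^p$ concentrated in degree zero; because resolution of singularities makes smooth schemes a generating family for the cdh-topology, this map becomes an isomorphism after applying $a^*$. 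Second, $\DB_{X/F}^p$ satisfies descent for abstract blowup squares---this is the standard distinguished triangle coming from the Guill\'en--Navarro Aznar hyperresolution formalism---and together with its evident Nisnevich descent this gives cdh-descent, i.e. the unit $\DB_{X/F}^p\to \bR a_* a^* \DB_{X/F}^p$ is an isomorphism. Combining these with the definition of the cdh-differentials yields
\[\Omega_{cdh,X/F}^p = \bR a_* a^* \Omega_{X/F}^p \simeq \bR a_* a^* \DB_{X/F}^p \simeq \DB_{X/F}^p.\]

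To descend to a subfield $k\subseteq F$, I would reduce to the absolute case by means of the transitivity decomposition of the differentials. Since $F/k$ is separable in characteristic zero, on any smooth $F$-scheme one has a locally split short exact sequence relating $\Omega_{X/k}^1$, $\Omega_{X/F}^1$ and $\O_X\otimes_F \Omega_{F/k}$, whose exterior powers give a functorial decomposition
\[\Omega_{X/k}^p \simeq \bigoplus_{i+j=p} \Omega_{X/F}^i \otimes_F \wedge^j \Omega_{F/k}.\]
Applied to a hyperresolution by smooth $F$-schemes and pushed forward---using that $\wedge^j\Omega_{F/k}$ is a filtered colimit of finite free $F$-modules, so that derived pushforward commutes with $\blank\otimes_F \wedge^j\Omega_{F/k}$---this produces $\DB_{X/k}^p \simeq \bigoplus_{i+j=p}\DB_{X/F}^i\otimes_F \wedge^j\Omega_{F/k}$. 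The same decomposition should hold for $\Omega_{cdh,X/k}^p$: although Kähler differentials over $k$ do not split in this way on singular $X$, the functor $\bR a_* a^*$ records only cdh-local data and hence is computed on smooth schemes, where the splitting is valid. Feeding the absolute comparison $\DB_{X/F}^i\simeq \Omega_{cdh,X/F}^i$ into these two decompositions term by term gives the claim for general $k$.

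The two cited inputs enter exactly at this reduction, and they pinpoint where I expect the real difficulty to lie. I anticipate that \cite{SV}*{Lemma 12.1} provides that smooth schemes form a generating family for the cdh-topology, so that the comparison map and the transitivity decomposition need only be verified there, while \cite{K-Vorst}*{Corollary 2.5} controls the $k$-relative cdh-differentials, ensuring that the decomposition survives cdh-sheafification compatibly with the comparison maps. The hard part will be precisely this compatibility: one must check that $a^*$ and $\bR a_*$ commute with the tensor operations $\blank\otimes_F\wedge^j\Omega_{F/k}$---delicate because $\Omega_{F/k}$ is infinite-dimensional when $F/k$ is transcendental and must be handled as a filtered colimit of finite free modules---and that the failure of $\Omega_{X/k}^p$ to split over the singular locus of $X$ genuinely disappears after cdh-sheafification. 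Once these compatibilities are in place, the descent argument and the decomposition go through verbatim, and the absolute case closes the proof.
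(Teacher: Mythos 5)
Your treatment of the absolute case $k=F$ is fine: it is the standard cdh-descent argument (agreement with $\Omega^p$ on smooth schemes, descent for abstract blowup squares from the hyperresolution formalism, and resolution of singularities to generate the cdh topology), and this is exactly the content of the cited \cite{K-Bass-negative}*{Lemma 2.1}. The gap is in your reduction of the relative case to the absolute one. It rests on the claimed functorial splitting
\[\Omega_{X/k}^p \simeq \bigoplus_{i+j=p} \Omega_{X/F}^i \otimes_F \wedge^j \Omega_{F/k}\]
for $X$ smooth over $F$, and this is false in general. The transitivity sequence
\[0\to \O_X\otimes_F \Omega_{F/k}\to \Omega^1_{X/k}\to \Omega^1_{X/F}\to 0\]
is locally split (because $\Omega^1_{X/F}$ is locally free), but it is not canonically or globally split: its class in $\Ext^1_X(\Omega^1_{X/F}, \O_X\otimes_F \Omega_{F/k})$ is the Kodaira--Spencer class of $X/F$ relative to $k$, which is nonzero, for instance, for a non-isotrivial family of curves viewed as a smooth variety over $F=k(t)$. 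So one only has a \emph{filtration} with those graded pieces, never a natural direct sum --- which is precisely why the paper's base-change lemma (Lemma \ref{lemma: DB-change-of-field}) records spectral sequences rather than decompositions. Your suggestion that $\bR a_* a^*$ ``is computed on smooth schemes, where the splitting is valid'' does not repair this: the obstruction is a global $\Ext^1$ class already on smooth schemes, not a local defect that sheafification can erase, and cdh-sheafification of a filtered object yields a filtered object, not a split one. As stated, the term-by-term comparison feeding the absolute case into the two decompositions therefore does not exist.

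The irony is that the correct argument is simpler than your reduction, and it is what the paper means by ``the same proof'': no reduction to $k=F$ is needed. The descent argument runs verbatim with $k$-relative differentials. The complex $\DB^p_{X/k}=\bR\epsilon_{\bullet*}\Omega^p_{X_\bullet/k}$ still satisfies descent for abstract blowup squares (this is a property of the hyperresolution formalism, independent of the base of the differentials); smooth schemes still generate the cdh topology, which is the role of \cite{SV}*{Lemma 12.1}; and on smooth schemes the $k$-relative K\"ahler differentials still agree with their cdh sheafification, which is the role of \cite{K-Vorst}*{Corollary 2.5}. One then concludes directly that
\[\Omega^p_{cdh,X/k}=\bR a_* a^*\Omega^p_{X/k}\simeq \bR a_* a^*\DB^p_{X/k}\simeq \DB^p_{X/k},\]
with no decomposition of $\Omega^p_{X/k}$ ever being invoked. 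If you do want a reduction-style argument, you must replace your direct sums by the natural filtrations and compare the resulting spectral sequences, but for this lemma that detour is unnecessary.
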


For an $F$-scheme $X$, we denote by $\DB_{X/F}^{\le p}$ the (derived) cdh-sheafification of the following complex of quasi-coherent sheaves:
\[\O_X\to \Omega_{X/F}^1 \to \cdots \to \Omega_{X/F}^p\]  
supported in cohomological degrees $0,1,\cdots, p$. Explicitly, $\Omega_{cdh, X/F}^{\le p} := \bR a_*a^*\Omega_{X/F}^{\le p}$, where $a: X_{cdh}\to X_{zar}$ is the natural change-of-topology morphism from the cdh site to the Zariski site.

When $X$ is of finite type over $F$, this represents an object in the derived category of differential complexes on $X$. When $F=\CC$, it follows from Lemma \ref{lemma:DB-cdh} that for each $p\ge 0$,
\[\DB_{X/\CC}^{\le p} = \DB_{X/\CC}^{\bullet}/F^{p+1}\]
agrees with the quotient of the filtered de Rham complex by the Hodge filtration. It is immediate from the definition that we have the following exact triangle for each $p\ge 0$:
\begin{equation}\label{eqn:le-p}
    \DB_{X/F}^p[-p]\to \DB_{X/F}^{\le p} \to \DB_{X/F}^{\le p-1}\xto{+1}.
\end{equation}

In the proof of \cite{K-Weibel}*{Theorem 6.1} and \cite{K-Vorst}*{Proposition 2.6}, the following useful properties of the objects $\DB_{X/F}^p$ and $\DB_{X/F}^{\le p}$ are established:
\begin{proposition}\label{prop:DB-complex-support}
    Let $X$ be a separated, finite type $F$-scheme, and let $k \subset F$ be a subfield. Then for each $p\ge 0$,
    \begin{enumerate}
        \item $\dim \supp \H^q \DB_{X/k}^p \le \dim X - q - 1$ if $q>0$; in particular, $\H^i\DB_{X/k}^p=0$ for all $p$ and all $i\ge d$;
        \item $\HH^q(X,\DB_{X/k}^p)=0$ for all $q>\dim X$;
        \item $\HH^q(X,\DB_{X/k}^{\le p})=0$ for all $q>\dim X+p$.
    \end{enumerate}
    \begin{proof}
        (1) is stated in the proof of \cite{K-Weibel}*{Theorem 6.1} when $p=0$ (in which case $\DB_{X/k}^0=\DB_{X/F}^0$). The proof for $p\ge 1$ is identical when $k=F$, and the case for general $k\subset F$ follows from Lemma \ref{lemma: DB-change-of-field}(1). (2) follows from (1) and the spectral sequence
        \[E_1^{ij}=H^i(X, \H^j\DB_{X/F}^p)\implies \HH^{i+j}(X,\DB_{X/F}^p).\]
        Finally, (3) follows from (2) by induction using the exact triangle (\ref{eqn:le-p}).
    \end{proof}
\end{proposition}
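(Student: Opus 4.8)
The overall plan is to establish (1) by induction on $\dim X$ using $\operatorname{cdh}$-descent, and then to read off (2) and (3) from (1) by formal spectral-sequence and long-exact-sequence arguments. I would first reduce to the case $k=F$: enlarging the base field only extends scalars on the relevant coherent cohomology sheaves and changes neither their supports nor their cohomological dimensions, so the estimates for $\DB_{X/k}^p$ follow from those for $\DB_{X/F}^p$. For (1) I would use the identification $\DB_{X/F}^p\simeq\Omega_{cdh,X/F}^p$ of Lemma~\ref{lemma:DB-cdh} together with descent for abstract blowup squares. Fixing a resolution $\pi\colon\tilde X\to X$ with $\tilde X$ smooth and isomorphic over $X\setminus X_{\sing}$, and setting $Z=X_{\sing}$, $E=\pi^{-1}(Z)_{\mathrm{red}}$, descent yields the exact triangle
\[\Omega_{cdh,X}^p\to \bR\pi_*\Omega_{\tilde X}^p\oplus\Omega_{cdh,Z}^p\to \bR(\pi|_E)_*\Omega_{cdh,E}^p\xto{+1},\]
with $\dim Z,\dim E\le \dim X-1$. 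The base case $X$ smooth is immediate, since then $\Omega_{cdh,X}^p=\Omega_X^p$ sits in degree $0$.

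Writing $n=\dim X$, I would then run the long exact sequence of cohomology sheaves and bound each term. The higher direct image $R^q\pi_*\Omega_{\tilde X}^p$ is supported on the locus where the fibers of $\pi$ have dimension $\ge q$; since the exceptional locus is nowhere dense, hence of dimension $\le n-1$, the fiber-dimension inequality forces this locus to have dimension $\le n-q-1$. The terms coming from $Z$ and $E$ satisfy the analogous bound, by the inductive hypothesis together with the estimate $\dim\supp R^if_*\mathcal H\le\dim\supp\mathcal H-i$ for a proper map $f$. All of these meet the desired bound $\dim X-q-1$. The one genuinely delicate point—and the main obstacle—is the connecting homomorphism $\H^{q-1}\bR(\pi|_E)_*\Omega_{cdh,E}^p\to\H^q\Omega_{cdh,X}^p$, whose source only obeys the weaker bound $n-q$; the sharpest instance is $q=1$, where one must bound the image of $\H^0\bR(\pi|_E)_*\Omega_{cdh,E}^p$ in $\H^1\Omega_{cdh,X}^p$. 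Controlling this image amounts to showing that the cokernel of $\H^{q-1}(\bR\pi_*\Omega_{\tilde X}^p\oplus\Omega_{cdh,Z}^p)\to\H^{q-1}\bR(\pi|_E)_*\Omega_{cdh,E}^p$ drops by one more dimension than the naive estimate, which is handled by a local analysis at the generic points of the top-dimensional components of $X_{\sing}$. This is precisely the argument of \cite{K-Weibel}*{Theorem~6.1} in the case $p=0$, and it applies verbatim for $p\ge1$. The in-particular assertion then follows, since $\dim X-q-1<0$ once $q\ge\dim X$.

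For (2) I would use the hypercohomology spectral sequence $E_2^{ij}=H^i(X,\H^j\DB_{X/F}^p)\Rightarrow\HH^{i+j}(X,\DB_{X/F}^p)$. By (1), for $j>0$ the sheaf $\H^j\DB_{X/F}^p$ is supported in dimension $\le\dim X-j-1$, so Grothendieck vanishing gives $E_2^{ij}=0$ unless $i+j\le\dim X-1$; for $j=0$ the coherent sheaf $\H^0\DB_{X/F}^p$ has support of dimension $\le\dim X$, so $E_2^{i0}=0$ for $i>\dim X$. Hence every surviving term lies in total degree $\le\dim X$, giving $\HH^q(X,\DB_{X/F}^p)=0$ for $q>\dim X$.

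For (3) I would induct on $p$ via the triangle (\ref{eqn:le-p}), the case $p=0$ being (2) since $\DB_{X/F}^{\le0}=\DB_{X/F}^0$. The associated long exact hypercohomology sequence sandwiches $\HH^q(X,\DB_{X/F}^{\le p})$ between $\HH^{q-p}(X,\DB_{X/F}^p)$ and $\HH^q(X,\DB_{X/F}^{\le p-1})$; the first vanishes for $q-p>\dim X$ by (2) and the second for $q>\dim X+p-1$ by induction, yielding $\HH^q(X,\DB_{X/F}^{\le p})=0$ for $q>\dim X+p$. With the support estimate (1) in hand, (2) and (3) are thus essentially formal, and the real work is concentrated in the low-degree bookkeeping of the inductive step for (1).
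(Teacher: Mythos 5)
Your proposal is correct and follows the paper's proof in essentially the same way: part (1) rests, exactly as in the paper, on the inductive cdh-descent argument in the proof of \cite{K-Weibel}*{Theorem 6.1} --- you reconstruct its skeleton and correctly isolate the delicate connecting-map estimate, but, like the paper, you defer precisely that step to the citation together with the claim that it applies verbatim for $p\ge 1$ --- while parts (2) and (3) are the paper's arguments exactly (hypercohomology spectral sequence plus Grothendieck vanishing, then induction on $p$ via the triangle (\ref{eqn:le-p})). One imprecision worth fixing: the reduction to $k=F$ is not an ``extension of scalars,'' since for a subfield $k\subset F$ the complex $\DB_{X/k}^p$ is genuinely larger than $\DB_{X/F}^p$; the correct mechanism is the spectral sequence of Lemma \ref{lemma: DB-change-of-field}(1), whose $E_1$-terms $\Omega^i_{F/k}\otimes_F \H^q\DB_{X/F}^{p-i}$ show that $\supp \H^q\DB_{X/k}^p$ lies in the union of the supports of the $\H^q\DB_{X/F}^{p-i}$ over all $i\ge 0$ --- harmless here only because the bound $\dim X-q-1$ is uniform in the exponent $p$.
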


\subsection{Derived de Rham complex and the cotangent complex}\label{subsec:derived-dR} 
Following Bhatt \cite{Bhatt-derived}*{Construction 4.1}, for an $F$-algebra $A$, we define the \textit{(Hodge-completed) derived de Rham complex of $A$ over $F$} as
\[\dR_{A/F}:=\lim_{p\in \NN^{op}} L\Omega_{A/F}^{\le p},\]
where $L\Omega_{A/F}^{\le p}:=\mathrm{Tot}(\Omega_{P/F}^{\le p}\otimes_P A)$ is the single complex associated to the double complex $\Omega_{P}^{\le p}\otimes_P A$, where $P\to A$ is a polynomial $F$-algebra resolution of $A$. It can be shown that the objects $L\Omega_{A/F}^{\le p}$ and $\dR_{A/F}$ are independent of the choice of the polynomial resolution $P\to A$ (up to homotopy), and that the construction globalizes to give definitions of $L\Omega_{X/F}^{\le p}$ and $\dR_{X/F}$ for any $F$-scheme $X$.

By construction, there is a natural \textit{derived filtration} $\Fil_H$ on $\widehat{\rm dR}_{X/F}$, with respect to which the graded pieces are given by
\[\gr^p_H \dR_{X/F} = \LL^p_{X/F}[-p]\]
where $\LL^p_{X/F}$ is the $p$-th derived wedge product of the cotangent complex, introduced by Illusie \cite{Illusie-cotangent-complex}. 

As explained in \cite{Bhatt-derived}*{section 5}, for a finite type $F$-scheme $X$, there is a map
\[\varphi:\dR_{X/F}\to \DB_{X/F}^{\bullet}\] 
preserving the derived filtration on $\dR_{X/F}$ and the Hodge filtration on $\DB_{X/F}^{\bullet}$. Taking associated graded pieces with respect to the two filtrations, we obtain a natural map
\[\LL_{X/F}^p\to \DB_{X/F}^p\] 
for each $p$. It is a natural question when these maps are isomorphisms. For local complete intersections over $F=\CC$, this question is studied in the theory of higher Du Bois singularities. When $X$ is not a local complete intersection, this map is rarely an isomorphism unless $p=0$; see Remark \ref{rmk:no-non-lci-example}.

\subsection{Background in higher Du Bois singularities}\label{sec:DB-definition}
In the literature, higher Du Bois singularities are defined and studied for complex algebraic varieties. For the purpose of relating to $K$-regularity, we follow the notation in the previous subsections, and work with these singularities over more general base fields.

Let $X$ be a local complete intersection over $F$, and $k\subset F$ a subfield. Following \cites{MOPW, JKSY} (with the obvious replacement of $\CC$ by $F$), we say $X$ has \textit{$m$-Du Bois singularities over $k$} if the natural maps $\Omega_{X/k}^p\to \DB_{X/k}^p$ are isomorphisms for all $p\le m$, where $\Omega_{X/k}^p$ is the (underived) $p$-th wedge product of the K\"ahler differential $\Omega_{X/k}^1$ relative to $k$. One important property of $m$-Du Bois singularities is the following codimension bound on the singular locus, obtained by combining \cite{MP-LC}*{Theorem F} and \cite{MP-LC}*{Corollary 9.26}.
\begin{theorem}[\Mustata-Popa]\label{thm:MP-codim-bound}
    Let $X$ be a local complete intersection. If $X$ has $m$-Du Bois singularities, then
    \[\codim X_{\sing}\ge 2m+1.\]
\end{theorem}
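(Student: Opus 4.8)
The plan is to reduce the statement to the case of an isolated singularity and then exploit the symmetry of the Hodge spectrum. Set $c=\codim X_{\sing}$ and $s=\dim X_{\sing}$, and fix a maximal-dimensional component $Z$ of $X_{\sing}$, so $\dim Z=s$ and $c=\dim X-s$. The starting point is the characterization of higher Du Bois singularities for local complete intersections via the minimal exponent: a codimension $r$ local complete intersection has $m$-Du Bois singularities if and only if $\tilde\alpha_X\ge m+r$ (see \cites{Saito-min-exponent,JKSY,MOPW,CDM}). Since the minimal exponent is the invariant that controls the condition, it suffices to locate, at a general point of $Z$, a singularity of dimension exactly $c$ with the same minimal exponent, and to bound its dimension.

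First I would pass to a general point $x\in Z$. By generic smoothness of $Z$ together with the standard equisingularity of the analytic type of $X$ along $Z$ at a general point, the germ of $X$ at $x$ is, formally (equivalently analytically) locally, a product $X_0\times\AA^s$, where $X_0$ is a codimension $r$ local complete intersection in a smooth germ with an \emph{isolated} singularity and $\dim X_0=c$. The minimal exponent is unchanged by the smooth factor $\AA^s$ and by formal/analytic localization, so $\tilde\alpha_{X_0}=\tilde\alpha_X\ge m+r$; in particular $X_0$ is again $m$-Du Bois. The problem is thus reduced to the following assertion: an isolated local complete intersection singularity $X_0$ of codimension $r$ and dimension $c$ that is $m$-Du Bois satisfies $c\ge 2m+1$.

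For the isolated case I would use the symmetry of the Hodge spectrum. In the hypersurface case ($r=1$, $X_0=(f=0)\subset\CC^{c+1}$) the reduced spectrum of $f$ is symmetric about $\tfrac{c+1}{2}$ by Steenbrink's theorem \cite{Steenbrink-isolated}, so its smallest element, which is precisely $\tilde\alpha_{X_0}$, satisfies $\tilde\alpha_{X_0}\le\tfrac{c+1}{2}$; combined with $\tilde\alpha_{X_0}\ge m+1$ this gives $c\ge 2m+1$. For higher codimension one has the analogous self-duality of the Hodge-theoretic data underlying the spectrum (namely the Hodge filtration on the local cohomology $\H^r_X\O_Y$), which forces $\tilde\alpha_{X_0}$ to lie below the center of symmetry, yielding a bound of the form $\tilde\alpha_{X_0}\le\tfrac{c+r}{2}$; together with $\tilde\alpha_{X_0}\ge m+r$ this even gives $c\ge 2m+r\ge 2m+1$.

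The main obstacle is twofold. One must justify the local product structure along a general point of the singular locus and the invariance of the minimal exponent under it; this is where generic smoothness and the behavior of the $V$-filtration and $b$-function under adding smooth variables enter, and it is what allows the non-isolated problem to be detected by a transverse isolated singularity. The genuinely Hodge-theoretic heart is the spectral symmetry in the local complete intersection case: for hypersurfaces it is classical, but in higher codimension it relies on the duality properties of the Hodge filtration on $\H^r_X\O_Y$ developed in \cite{MP-LC}. This is exactly the content packaged into \cite{MP-LC}*{Theorem F} and \cite{MP-LC}*{Corollary 9.26}, so an efficient write-up would cite these directly rather than reprove the duality and the minimal-exponent characterization from scratch.
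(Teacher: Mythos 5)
The paper does not actually prove this statement: it is quoted as a combination of \cite{MP-LC}*{Theorem F} and \cite{MP-LC}*{Corollary 9.26}, which is exactly what your closing paragraph proposes to do. The argument you sketch before that, however, has a genuine gap at the reduction step. You pass to a general point $x$ of a top-dimensional component $Z$ of $X_{\sing}$ and assert, as ``standard equisingularity,'' that the germ of $X$ at $x$ is formally/analytically a product $X_0\times\AA^s$ with $X_0$ an isolated singularity. This is false in general: what holds generically along $Z$ is only topological (Whitney--Thom--Mather) triviality, not analytic triviality. The classical counterexample is $f=xy(x+y)(x+ty)$ in $\CC^3$: the singular locus is the $t$-axis, and the transverse slice at $t=t_0$ consists of four concurrent lines whose cross-ratio varies with $t_0$, so the germ is not a product at any point of the axis. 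The statement you actually need --- that the minimal exponent is unchanged upon restriction to a generic slice transverse to $Z$ at a general point of $Z$ --- is true, but it is a theorem about non-characteristic restriction of the $V$-filtration/$b$-function (Saito for hypersurfaces, \cite{CDMO} and \cite{CDM} in the local complete intersection setting), not a consequence of a local product structure. Once that substitution is made, your reduction is sound, and it is in fact the mechanism behind \cite{CDM}*{Corollary 1.3} and the results of \cite{MP-LC}; but the input is substantially deeper than the one you name.

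There is a second overclaim in the higher-codimension case: the bound $\tilde{\alpha}_{X_0}\le\tfrac{c+r}{2}$, and hence $c\ge 2m+r$, cannot hold for an arbitrary codimension-$r$ embedding. By \cite{CDMO}, the quantity $\tilde{\alpha}_X-r$ is independent of the embedding, so $\tilde{\alpha}$ increases by exactly $1$ under the trivial re-embedding $X\subset Y\subset Y\times\AA^1$, while your proposed center of symmetry $\tfrac{c+r}{2}$ increases only by $\tfrac12$. Concretely, $X=V(x_1^2+\cdots+x_{2m+2}^2)\subset\AA^{2m+2}$ is $m$-Du Bois with $\codim X_{\sing}=2m+1$; re-embedded as the codimension-two local complete intersection $V(f,x_{2m+3})\subset\AA^{2m+3}$ it is still $m$-Du Bois, yet $\codim X_{\sing}=2m+1<2m+2$. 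The conclusion $\codim X_{\sing}\ge 2m+r$ is precisely the paper's Theorem \ref{thm:codim-bound} (Dirks), and it requires $r$ to be the \emph{minimal} embedding codimension in the sense of Definition \ref{def:minimal-embedding-codim}. The embedding-independent bound for a singular local complete intersection is the weaker $\tilde{\alpha}_{X_0}\le\tfrac{c+2r-1}{2}$, which combined with $\tilde{\alpha}_{X_0}\ge m+r$ still yields $c\ge 2m+1$ --- exactly the statement at hand. Your hypersurface case ($r=1$) is fine as written, granting the spectrum symmetry, which is classical Steenbrink rather than \cite{Steenbrink-isolated}.
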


As a consequence, for local complete intersections one can replace $\Omega_{X/k}^p$ in the definition of $m$-Du Bois singularities by the more natural object $\LL_{X/k}^p$, the $p$-th derived wedge product of the cotangent complex of $X$ over $k$:

\begin{lemma}\label{lemma:lci-DB-def}
    Let $X$ be a local complete intersection over $F$, and $k\subset F$ a subfield. Then the following are equivalent:
    \begin{enumerate}
        \item $X$ has $m$-Du Bois singularities over $k$;
        \item $X$ has $m$-Du Bois singularities over $F$;
        \item $X$ has $m$-Du Bois singularities over $F'$, for any field extension $F\subset F'$;
        \item The natural maps $\LL_{X/k}^p\to \DB_{X/k}^p$ are isomorphisms for all $p\le m$;
        \item The natural maps $\LL_{X/F}^p\to \DB_{X/F}^p$ are isomorphisms for all $p\le m$;
        \item The natural maps $\LL_{X\times_F F'/F'}^p\to \DB_{X\times_F F'/F'}^p$ are isomorphisms for all $p\le m$, and any field extension $F\subset F'$.
    \end{enumerate}
    \begin{proof}
        The equivalences $(1)\iff (2)$ and $(4)\iff (5)$ follow from Lemma \ref{lemma: DB-change-of-field}(1), and the equivalences $(2)\iff (3)$ and $(5)\iff (6)$ follows from Lemma \ref{lemma: DB-change-of-field}(2). 
        
        We will show $(2)\iff (5)$: using Lemma \ref{lemma: DB-change-of-field}, we reduce to the case $k=\CC$\footnote{More precisely, given a variety $X/k$ with $m$-Du Bois singularities over $k$, we can write $X\simeq X(k_0)\times_{k_0} k$ for some extension field $k_0$ of $\QQ$ of finite transcendence degree. Then, the field $k_0$ embeds into $\CC$. By the equivalences established above, $X$ has $m$-Du Bois singularities if and only if the complex algebraic variety $X(k_0)\times_{k_0} \CC$ has $m$-Du Bois singularities. If we can show this is equivalent to $\LL^p_{X(k_0)\times_{k_0} \CC/\CC}\to \DB^p_{X(k_0)\times_{k_0} \CC/\CC}$ are isomorphisms for $p\le m$, applying the equivalence $(5)\iff (6)$ then gives the result.}. For ease of notation, we then write $\LL_{X/\CC}^p$ and $\DB_{X/\CC}^p$ simply as $\LL_{X}^p$ and $\DB_{X}^p$.

    If the natural maps $\LL_X^p\to \DB_X^p$ are isomorphisms for all $p\le m$, taking cohomology in degree $\ge 0$ shows that $X$ has $m$-Du Bois singularities. 

    For the converse, suppose $X$ is a local complete intersection subvariety of a smooth variety $Y$, and that $X$ has $m$-Du Bois singularities. Denote by $\I=\I_{X/Y}$ the ideal sheaf defining $X$ in $Y$. Then, the cotangent complex $\LL^p_X$ can be represented by
    \[\I^p/\I^{p+1}\to \I^{p-1}/\I^p \otimes \Omega_Y^1|_X \to \cdots \to \I/\I^2 \otimes \Omega_Y^{p-1}|_X\to \Omega_Y^p|_X,\]
    supported in cohomological degrees $-p, -p+1, \cdots, -1, 0$.

    This is the Eagon-Northcott complex associated to (the dual of) the map $\I/\I^2\to \Omega_Y^1|_X$. As explained in \cite{MP-LC} before the proof of Theorem F\footnote{In loc. cit., this complex shows up as the Grothendieck dual of $\Gr^E_{-p} \DR(\H^r_X \O_Y)$, the graded piece of the Hodge module $\H^r_X \O_Y$ with respect to the Ext filtration.}, by the depth-sensitivity of the Eagon-Northcott complex, it is a resolution of $\Omega_X^p$ when $\codim X_{\sing} \ge p$. This is satisfied when $X$ has $m$-Du Bois singularities for $m\ge 1$ (see Theorem \ref{thm:MP-codim-bound}). Since the Lemma is clear when $m=0$ (in which case $\LL_X^0 = \O_X = \Omega_X^0$), this completes the proof.
\end{proof}
\end{lemma}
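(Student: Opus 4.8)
The plan is to organize the six conditions into two groups — the three ``$\Omega$-versions'' (1), (2), (3) and the three ``$\LL$-versions'' (4), (5), (6) — and to observe that within each group the equivalences are purely a matter of changing the base field. Indeed, I expect all of $(1)\!\iff\!(2)$, $(2)\!\iff\!(3)$, $(4)\!\iff\!(5)$ and $(5)\!\iff\!(6)$ to follow formally from the behaviour of $\Omega^p$, $\LL^p$ and $\DB^p$ under the extensions $k\subset F$ and $F\subset F'$, which I would package into a single change-of-field lemma (the referenced Lemma~\ref{lemma: DB-change-of-field}) and then invoke. The entire mathematical content then collapses to one bridge between the two groups, say $(2)\iff(5)$, relating the underived notion ($\Omega^p\to\DB^p$) to the derived notion ($\LL^p\to\DB^p$). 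Using the change-of-field statements once more, I would first reduce this bridge to the case $F=\CC$, where the Hodge-theoretic inputs live; this is exactly the reduction in the footnote (spread $X$ out over a subfield of finite transcendence degree over $\QQ$, then embed that subfield into $\CC$).

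For the direction $(5)\Rightarrow(2)$ I would argue by comparing amplitudes. The cotangent complex of a local complete intersection $X\subset Y$ is perfect of amplitude $[-1,0]$, so its derived wedge $\LL^p_X$ is represented by a complex concentrated in cohomological degrees $[-p,0]$; on the other hand $\DB^p_X$ is concentrated in nonnegative degrees by construction, and by Proposition~\ref{prop:DB-complex-support} its cohomology vanishes in degrees $\ge d$. An isomorphism $\LL^p_X\simeq\DB^p_X$ therefore forces both objects to be concentrated in degree $0$, where $\H^0\LL^p_X=\Omega^p_X$. Since the natural map $\LL^p_X\to\DB^p_X$ factors through the truncation $\LL^p_X\to\H^0\LL^p_X=\Omega^p_X$ followed by the Du Bois map (any morphism in the derived category from a complex in nonpositive degrees to one in nonnegative degrees factors uniquely through the $\H^0$ of its source), this identifies $\Omega^p_X\to\DB^p_X$ with an isomorphism, i.e.\ $X$ is $m$-Du Bois.

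For the converse $(2)\Rightarrow(5)$ the same factorization shows it suffices to prove that the truncation $\LL^p_X\to\Omega^p_X$ is an isomorphism, i.e.\ that $\LL^p_X$ is concentrated in degree $0$, for all $p\le m$; combined with the assumed isomorphism $\Omega^p_X\simeq\DB^p_X$ this then yields $\LL^p_X\simeq\DB^p_X$. To establish the concentration I would write $\LL^p_X$ explicitly as the Eagon--Northcott-type complex
\[\I^p/\I^{p+1}\to \I^{p-1}/\I^p\otimes\Omega^1_Y|_X\to\cdots\to\I/\I^2\otimes\Omega^{p-1}_Y|_X\to\Omega^p_Y|_X\]
attached to $\I/\I^2\to\Omega^1_Y|_X$, and invoke its depth-sensitivity: it is acyclic away from degree $0$ — hence a resolution of $\Omega^p_X$ — as soon as $\codim X_{\sing}\ge p$. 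The required codimension estimate comes from the fact that an $m$-Du Bois local complete intersection has $\codim X_{\sing}\ge 2m+1$ (Theorem~\ref{thm:MP-codim-bound}), which gives $\codim X_{\sing}\ge p$ for every $p\le m$ once $m\ge 1$; the case $m=0$ is immediate since $\LL^0_X=\O_X=\Omega^0_X$.

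The main obstacle is the converse direction, and specifically the input that an $m$-Du Bois local complete intersection has singular locus of codimension at least $2m+1$: without it the Eagon--Northcott complex need not be a resolution, $\LL^p_X$ can carry cohomology in negative degrees, and the derived and underived notions genuinely diverge. The only other delicate point is the reduction to $F=\CC$, which must be carried out compatibly with all of the base-change equivalences; I would handle it precisely as in the footnote, by spreading out and choosing an embedding into $\CC$.
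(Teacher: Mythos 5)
Your proposal is correct and follows essentially the same route as the paper: the change-of-field lemma handles the equivalences within each group, the footnote-style spreading-out reduces the bridge $(2)\iff(5)$ to $F=\CC$, the amplitude/truncation argument gives $(5)\Rightarrow(2)$, and the Eagon--Northcott depth-sensitivity together with the codimension bound $\codim X_{\sing}\ge 2m+1$ from Theorem~\ref{thm:MP-codim-bound} gives $(2)\Rightarrow(5)$. The only differences are cosmetic: you spell out the derived-category factorization through $\H^0\LL^p_X=\Omega^p_X$ that the paper leaves implicit in its phrase ``taking cohomology in degree $\ge 0$.''
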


In view of the equivalence $(1)\iff (2)$, we will sometimes simply say $X$ has \textit{$m$-Du Bois singularities} when the base field causes no confusion.    
    
When $X$ is not necessarily a local complete intersection, as explained in \cite{SVV}, the conditions $\Omega_{X/F}^1\xto{\sim} \DB_{X/F}^1$ and $\LL_{X/F}^1\xto{\sim} \DB_{X/F}^1$ are too restrictive due to the bad behaviors of the K\"ahler differentials. 

\begin{remark}\label{rmk:no-non-lci-example}
    For a variety $X$ over $F$, the sheaf $\H^0\DB_{X/F}^1$ (even $\H^0\DB_{X/F}^p$ for all $p\ge 1$) is
    \begin{itemize}
        \item torsion-free by \cite{h-differential}*{Remark 3.8};
        \item reflexive when $X$ has rational singularities, by \cite{KS-extending}*{Corollary 1.2} and the identification of $\H^0\DB_{X/F}^1$ with the $h$-differential in \cite{h-differential}. 
    \end{itemize} 
    On the other hand, when $X$ is not a local complete intersection, the K\"ahler differentials typically have torsion, and are not reflexive even when $X$ has mild singularities; see \cite{Kahler-diff-torsion}. For such varieties, one does not have $\Omega_{X/F}^1\simeq \H^0\DB_{X/F}^1$. 

    As a consequence, when $X$ is not a local complete intersection, the maps $\Omega_{X/F}^1\to \DB_{X/F}^1$ and $\LL_{X/F}^1\to \DB_{X/F}^1$ need not be isomorphisms even when $X$ has mild singularities. 
\end{remark}

In fact, currently there is no known example of a variety satisfying $\Omega_X^1\simeq \H^0\DB_X^1$ that is not a local complete intersection. Therefore, for arbitrary varieties, we work with the following definition. The equivalences follow from Lemma \ref{lemma: DB-change-of-field}.
\begin{definition}
    Let $X$ be a variety over $F$, and $k\subset F$ a subfield. We say that $X$ has \textit{pre-$m$-Du Bois singularities} if one of the following equivalent conditions hold:
    \begin{enumerate}
        \item $\H^i\DB_{X/F}^p=0$ for all $p\le m$ and $i>0$;
        \item $\H^i\DB_{X/k}^p=0$ for all $p\le m$ and $i>0$;
        \item $\H^i\DB_{X\times_F F'/F'}^p=0$ for all $p\le m$ and $i>0$, for some (any) field extension $F'$ of $F$.
    \end{enumerate}
\end{definition}

\begin{remark}
    Criteria for various classes of varieties (that are typically not local complete intersections) to have pre-$m$-Du Bois singularities have been worked out. See \cite{GNPP}*{V Theorem 4.6} for toric varieties, \cites{K-cones,PSh} for cones over projective varieties, and \cite{secant} for secant varieties. 
    
    For local complete intersections, one can characterize $m$-Du Bois singularities using the minimal exponent \cites{JKSY,MOPW,CDMO,CDM}, which is an invariant that refines the log canonical threshold.
    
    On the other hand, necessary and sufficient criteria for an arbitrary variety to have pre-$m$-Du Bois singularities are not known at this moment.
\end{remark}

Following \cite{SVV}, we say $X$ has \textit{$m$-Du Bois singularities} if it is seminormal, pre-$m$-Du Bois, the sheaves $\H^0\DB_{X/F}^p$ are reflexive for all $p\le m$, and that $\codim X_{\sing}\ge 2m+1$. As explained in \cite{SVV}*{Section 5}, this new definition agrees with the well-studied notion in the case of local complete intersections, and includes more examples in general. Varieties with $m$-Du Bois singularities in this sense provide examples that are $K_{-d+m+1}$-regular; see Example \ref{ex: non-lci-K-regular}.

\subsection{\texorpdfstring{Background in $K$-regularity}{Background in K-regularity}}
Let $X$ be a scheme. We say $X$ is \textit{$K_m$-regular} if the natural maps
\[K_m(X)\to K_m(X\times \AA^r)\]
between algebraic $K$-groups are isomorphisms for all $r\ge 0$. It is a fact that if $X$ is regular, then it is $K_m$-regular for all $m$. Also, $K_m$-regularity implies $K_{m-1}$-regularity for all $m$; see \cite{Vorst} and \cite{Dayton-Weibel}*{Corollary 4.4}. 

In a sequence of foundational papers \cites{K-Weibel, K-Bass, K-Vorst} in this area, it was discovered that $K_m$-regularity of $X$ is controlled by the K\"ahler differentials, the cdh-differentials and the Hochschild homology of $X$. Following \textit{loc. cit.}, we denote by
\[N K_m(X) = \coker(K_m(X)\to K_m(X\times \AA^1))\]
the $NK$-groups of Bass. For $r\ge 2$, the groups $N^r K(X)$ are inductively defined by
\[N^r K_m(X):= N(N^{r-1} K_m)(X) := \coker(N^{r-1} K_m(X)\to N^{r-1} K_m(X\times \AA^1))\]

By \cite{K-Bass}*{Theorem 0.1}, we have
\begin{align*}
   N^r K_m(X) = 0 &\iff NK_m(X) = NK_{m-1}(X) = \cdots = NK_{m-r+1}(X) = 0 \\
   &\iff K_m(X) \xto{\sim} K_m(X\times \AA^r) \text{ is an isomorphism}.
\end{align*}

This reduces the study of $K$-regularity to that of the $NK$-groups. Another theorem in \cite{K-Bass} expresses $NK$, or more precisely, $NK^{(i)}$, the $i$-th eigenspace of the $\lambda$-operation on $NK$, in terms of the K\"ahler differentials, the cdh differentials and the Hochschild homology of $X$. We rephrase the statement with the notation of Du Bois complexes, using Lemma \ref{lemma:DB-cdh}.

\begin{theorem}\cite{K-Bass}*{Theorem 0.4}\label{thm:CHWW-main}
Let $X=\Spec R$ be an affine scheme over $\QQ$. Then
    \[NK_n^{(i)}(X) \cong TK_n^{(i)}(X)\otimes_{\QQ} t\QQ[t]\]
where $t$ is the coordinate on $\AA^1$ in the definition of $NK$, and
\[TK_n^{(i)}(X) \simeq 
\begin{cases}
    \H^{i-n-1}\DB_{X/\QQ}^{i-1} \quad &n\le i-2 \\
    \coker(\Omega_{X/\QQ}^i\to \H^0\DB_{X/\QQ}^i) \quad &n = i-1 \\

    \ker(\Omega_{X/\QQ}^i\to \H^0\DB_{X/\QQ}^i) \quad &n=i \\
    HH^{(i-1)}_{n-1}(X) \quad &n\ge i+1
\end{cases}\]
\end{theorem}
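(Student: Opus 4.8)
The plan is to deduce this statement directly from \cite{K-Bass}*{Theorem 0.4}, which is phrased in terms of the cdh-differentials $\Omega_{cdh, X/\QQ}^p$ rather than the Du Bois complexes, and then to translate the right-hand side using Lemma \ref{lemma:DB-cdh}. Recall that \emph{loc.\ cit.}\ establishes the multiplicative structure $NK_n^{(i)}(X) \cong TK_n^{(i)}(X)\otimes_\QQ t\QQ[t]$ and identifies $TK_n^{(i)}(X)$ with $\H^{i-n-1}\Omega_{cdh, X/\QQ}^{i-1}$ for $n\le i-2$, with the cokernel and kernel of the canonical map $\Omega_{X/\QQ}^i \to \H^0\Omega_{cdh, X/\QQ}^i$ for $n=i-1$ and $n=i$ respectively, and with the Hochschild eigenspace $HH^{(i-1)}_{n-1}(X)$ for $n\ge i+1$. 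The multiplicative factorization $NK \cong TK\otimes t\QQ[t]$ and the Hochschild term involve no differentials, so they are carried over verbatim; the task is only to rewrite the three differential-theoretic terms.

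For that, I would invoke Lemma \ref{lemma:DB-cdh}, which gives an isomorphism $\DB_{X/\QQ}^p \simeq \Omega_{cdh, X/\QQ}^p$ in the derived category for every $p\ge 0$. Passing to cohomology sheaves yields $\H^{i-n-1}\Omega_{cdh, X/\QQ}^{i-1} \cong \H^{i-n-1}\DB_{X/\QQ}^{i-1}$, which settles the range $n\le i-2$, and in particular $\H^0\Omega_{cdh, X/\QQ}^i \cong \H^0\DB_{X/\QQ}^i$, which is the target appearing in the cases $n=i-1$ and $n=i$.

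The one point deserving care is the compatibility of the canonical maps: I would check that the map $\Omega_{X/\QQ}^i \to \H^0\Omega_{cdh, X/\QQ}^i$ of \cite{K-Bass}*{Theorem 0.4} is identified, under the isomorphism of Lemma \ref{lemma:DB-cdh}, with the natural map $\Omega_{X/\QQ}^i \to \H^0\DB_{X/\QQ}^i$. This holds because both arise from sheafification of the Kähler differentials: the cdh-differential $\Omega_{cdh, X/\QQ}^i = \bR a_* a^* \Omega_{X/\QQ}^i$ carries the unit-of-adjunction map from $\Omega_{X/\QQ}^i$, and the identification of Lemma \ref{lemma:DB-cdh} (built as in \cite{K-Bass-negative}*{Lemma 2.1}) is a map of objects under $\Omega_{X/\QQ}^i$, hence intertwines these two canonical arrows. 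Granting this naturality, the kernel and cokernel of $\Omega_{X/\QQ}^i \to \H^0\DB_{X/\QQ}^i$ coincide with those of $\Omega_{X/\QQ}^i \to \H^0\Omega_{cdh, X/\QQ}^i$, which finishes the cases $n=i-1$ and $n=i$ and completes the translation. The main (indeed only) obstacle is thus verifying this compatibility of canonical maps, which is a naturality check rather than any fresh computation.
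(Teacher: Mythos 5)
Your proposal is correct and matches the paper's treatment exactly: the paper states this theorem as a direct rephrasing of \cite{K-Bass}*{Theorem 0.4} (which is formulated with cdh-differentials), substituting $\DB_{X/\QQ}^p$ for $\Omega_{cdh,X/\QQ}^p$ via Lemma \ref{lemma:DB-cdh}. Your additional naturality check—that the isomorphism of Lemma \ref{lemma:DB-cdh} intertwines the two canonical maps out of $\Omega_{X/\QQ}^i$—is the right point to be careful about and is implicit in the paper's citation.
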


\begin{remark}\label{rmk:CHWW-main}
    Since $HH_n^{(i)}(X)\simeq \H_{n-i} \LL^i \simeq \H^{i-n} \LL^i$, this theorem can be rephrased in the following way: Let $C^i$ be the cone of the natural map $\LL^i_{X/\QQ} \to \DB^i_{X/\QQ}$. Then the
    $TK_n^{(i)}(X)$'s are determined by the cohomologies of $C^i$:
    \[TK^{(i+1)}_{i-j}(X) \simeq  \H^j C^i.\]  
    As a consequence, $X$ is $K_m$-regular if and only if
    \[\H^j C^i =0 \quad \text{ for all } j-i\ge -m.\]
 
    Thus, in view of Lemma \ref{lemma:lci-DB-def}, the notion of $K$-regularity is naturally related to higher singularities, at least for local complete intersections.
\end{remark}

To study the $K$-regularity for varieties that are not necessarily affine, it is convenient to work with a local version of Bass' $NK$-groups, denoted $\NK_m$, and defined as the sheafification of
\[U\mapsto NK_m(\Gamma(U))\]
on the \'etale site of $X$ in \cite{LocalNK}. As shown in \textit{loc. cit.}, we have
\[H^i_{\text{\'et}}(X, \NK_m) = H^i(X, \NK_m),\]
where $H^i$ denotes the cohomology taken in the Zariski topology. When $X$ is affine, 
\[H^i_{\text{\'et}}(X, \NK_m) = \begin{cases}
NK_m(X) \quad &\text{if $i=0$}, \\
0 \quad &\text{if $i>0$}.
\end{cases}\]
We have a local-to-global spectral sequence relating local and global $NK$-groups
\[E_2^{pq} = H^p(X, \NK_q(X))\implies NK_{q-p}(X).\]
In view of these results, Theorem \ref{thm:CHWW-main} can be stated in this local setting as follows:
\begin{theorem}[\cite{K-Bass}]\label{thm:CHWW-local}
    Let $X$ be any scheme over $\QQ$. Then 
    \[\NK_m(X) \simeq \bigoplus_{j-i=-m} \H^j C^i \otimes t\QQ[t],\]
    where $C^i = \Cone(\LL^i_{X/\QQ}\to \DB_{X/\QQ}^i)$. In particular, $X$ is \textit{locally $K_m$-regular}, i.e. $\NK_q(X)=0$ for all $q\le m$, if and only if 
    \[\H^j C^i = 0 \text{ for } j-i \ge -m.\]
\end{theorem}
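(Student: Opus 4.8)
The plan is to obtain this statement by sheafifying the affine result of Theorem~\ref{thm:CHWW-main}, in the form rephrased in Remark~\ref{rmk:CHWW-main}. Recall that for an affine scheme $\Spec R$ over $\QQ$, combining the $\lambda$-decomposition $NK_m=\bigoplus_i NK_m^{(i)}$ with the isomorphisms $NK_n^{(i)}\cong TK_n^{(i)}\otimes t\QQ[t]$ of Theorem~\ref{thm:CHWW-main} and the identity $TK^{(i+1)}_{i-j}\simeq \H^jC^i$ of Remark~\ref{rmk:CHWW-main} yields a natural isomorphism
\[
NK_m(R)\;\simeq\;\bigoplus_{j-i=-m}\H^j\!\bigl(C^i_R\bigr)\otimes_{\QQ} t\QQ[t],
\]
where $C^i_R=\Cone(\LL^i_{R/\QQ}\to\DB^i_{R/\QQ})$. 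Since the local groups $\NK_m(X)$ are by definition the sheafification of the presheaf $U\mapsto NK_m(\Gamma(U))$, the task is to identify the sheafification of the right-hand side with the global object $\bigoplus_{j-i=-m}\H^jC^i\otimes t\QQ[t]$, where now $C^i=\Cone(\LL^i_{X/\QQ}\to\DB^i_{X/\QQ})$.

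First I would record that the formation of $C^i$, and of its cohomology sheaves, is compatible with restriction to Zariski opens. Indeed, $\LL^i_{X/\QQ}$ has quasi-coherent cohomology by construction, and $\DB^i_{X/\QQ}\simeq\Omega^i_{cdh,X/\QQ}$ by Lemma~\ref{lemma:DB-cdh} has quasi-coherent cohomology as well; hence each $\H^jC^i$ is a quasi-coherent sheaf on $X$. Because localization is exact, passage to the cone and to its cohomology commutes with restriction to an affine open $U=\Spec\Gamma(U)$, so that
\[
\Gamma\bigl(U,\H^jC^i\bigr)\;\simeq\;\H^j\!\bigl(C^i_{\Gamma(U)}\bigr).
\]
Substituting this into the affine formula shows that, on the basis of affine opens, the natural map of presheaves $\bigl(U\mapsto NK_m(\Gamma(U))\bigr)\to\bigoplus_{j-i=-m}\H^jC^i\otimes_{\QQ}t\QQ[t]$ is an isomorphism; the naturality in $R$ of the isomorphisms in Theorem~\ref{thm:CHWW-main} guarantees that this is compatible with restriction maps, and in particular respects the $\lambda$-decomposition.

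To conclude I would sheafify. Since each $\H^jC^i$ is already a sheaf and $-\otimes_{\QQ}t\QQ[t]$ is exact (as $t\QQ[t]$ is a flat $\QQ$-module) and therefore commutes with sheafification, the object $\bigoplus_{j-i=-m}\H^jC^i\otimes t\QQ[t]$ is a sheaf; being quasi-coherent it satisfies \'etale descent, so it represents its own \'etale sheafification and agrees with the Zariski one recalled before the statement. As sheafification is determined by the values on a basis of affine opens, it follows that $\NK_m(X)\simeq\bigoplus_{j-i=-m}\H^jC^i\otimes t\QQ[t]$. For the final assertion, note that $t\QQ[t]\neq 0$, so $\H^jC^i\otimes t\QQ[t]=0$ if and only if $\H^jC^i=0$; thus $\NK_q(X)=0$ for all $q\le m$ precisely when $\H^jC^i=0$ for all $(i,j)$ with $j-i=-q$ and $q\le m$, that is, for all $j-i\ge -m$.

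The step I expect to be the main obstacle is the bookkeeping in the middle paragraph: checking that the identification $\Gamma(U,\H^jC^i)\simeq\H^j(C^i_{\Gamma(U)})$ holds compatibly with the $\lambda$-decomposition and with restriction maps, and that passing between the Zariski presheaf of quasi-coherent sections and the \'etale sheaf $\NK_m$ introduces no discrepancy. Everything else is formal once the affine Theorem~\ref{thm:CHWW-main} and the structural properties of $\NK_m$ recalled above are in hand.
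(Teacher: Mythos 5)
Your proposal is correct and takes essentially the same route the paper intends: the paper gives no independent proof of Theorem \ref{thm:CHWW-local}, presenting it as the affine Theorem \ref{thm:CHWW-main} (in the form of Remark \ref{rmk:CHWW-main}) combined with the properties of the \'etale sheaves $\NK_m$ from \cite{LocalNK} recalled just before the statement --- exactly the sheafification argument you carry out. The quasi-coherence, naturality, and Zariski-versus-\'etale bookkeeping you flag is the only content beyond those citations, and you handle it correctly.
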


\subsection{Base change results}
We saw that the theory of higher Du Bois singularities concerns the study of the Du Bois complex $\DB_{X/F}^p$ for a variety over $F$, whereas the $K$-regularity of $X$ is related to $\LL_{X/\QQ}^p$ and $\DB_{X/\QQ}^p$. Thus, to relate the two theories we will need base change results for the Du Bois complex and the cotangent complex over various fields, which we collect below.

\begin{lemma}\label{lemma: DB-change-of-field}
    Let $X$ be a variety over $F$.
    \begin{enumerate}
        \item (\cite{HH-base-change}*{4.3a}, \cite{K-Vorst}*{Lemma 4.1, 4.2})
        If $k\subset F$ is a subfield, then we have spectral sequences
        \[E_1^{ij}= \Omega^i_{F/k}\otimes_F \H^{i+j}\LL_{X/F}^{p-i}\implies \H^{i+j}\LL_{X/k}^p\]
        \[E_1^{ij}= \Omega^i_{F/k}\otimes_F \H^{i+j}\DB_{X/F}^{p-i}\implies \H^{i+j}\DB_{X/k}^p\]
        \[E_1^{ij}=\Omega^i_{F/k}\otimes \HH^j(X, L\Omega_{X/F}^{\le p-i})\implies \HH^{i+j}(X, L\Omega_{X/k}^{\le p})\]
        \[E_1^{ij}=\Omega^i_{F/k}\otimes \HH^j(X, \DB_{X/F}^{\le p-i})\implies \HH^{i+j}(X, \DB_{X/k}^{\le p})\]

        \item If $F \subset F'$ is a field extension, then we have isomorphisms
        \[\LL_{X\times_F F'/F'}^p \simeq \LL_{X/F}^p \otimes_F F'.\]
        \[\DB_{X\times_F F'/F'}^p \simeq \DB_{X/F}^p \otimes_F F'.\]
    \end{enumerate}
    \begin{proof}
        (1) The first spectral sequence follows from \cite{HH-base-change}*{4.3a}, using that $HH_n^{(i)}(X)\simeq \H_{n-i} \LL^i \simeq \H^{i-n} \LL^i$. The second one is exactly \cite{K-Vorst}*{Lemma 4.2}, by the comparison between cdh differentials and the Du Bois complexes (see Lemma \ref{lemma:DB-cdh}). The last two spectral sequences are proved along the same lines as in \cite{HH-base-change}*{4.3a} and \cite{K-Vorst}*{Lemma 4.2}, using instead the filtration $\G$ on $\Omega_{X/\QQ}^{\le p}$ given by
        \[\G^i/\G^{i+1} \simeq \Omega_{\CC/\QQ}^i \otimes_{\CC} \Omega_{X/\CC}^{\le p-i}[-i]\]
        when $X$ is smooth.
        
        (2)  
        For K\"ahler differentials, we have isomorphism
        \[\Omega_{X\times_k k'/k'}^p \simeq \Omega_{X/k}^p \otimes_k k'.\]
        This gives the two desired isomorphisms, taking left derived functor for the first, and cdh sheafification for the second.
    \end{proof}
\end{lemma}

\section{Vanishing theorems for the Du Bois complexes}\label{sec:vanishing-thm}
In this section, we collect vanishing theorems for the Du Bois complexes, with focus on the effect of the change of base fields on the statements. These will be applied later to study $K$-regularity.

The first general result is due to Steenbrink when $F=\CC$. The general case follows from Lemma \ref{lemma: DB-change-of-field}.
\begin{theorem}[\cite{Steenbrink-vanishing}]\label{thm:Steenbrink-vanishing}
    Let $X$ be a variety over $F$. Then
    \[\H^q \DB_{X/F}^p = 0\]
    for $p+q>\dim X$.
\end{theorem}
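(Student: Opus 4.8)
The plan is to treat Steenbrink's theorem over $\CC$ as a black box and to descend the vanishing to an arbitrary characteristic-zero base field $F$ by flat base change, using the isomorphism in Lemma \ref{lemma: DB-change-of-field}(2). Write $d = \dim X$ and fix $p, q$ with $p + q > d$; I must show $\H^q \DB_{X/F}^p = 0$. Since $X$ is of finite type over $F$, it is defined over a subfield $k_0 \subset F$ finitely generated over $\QQ$: there is a variety $X_0$ over $k_0$ with $X \simeq X_0 \times_{k_0} F$. As $k_0$ has finite transcendence degree over $\QQ$, it admits an embedding $k_0 \hookrightarrow \CC$; set $X_\CC := X_0 \times_{k_0} \CC$. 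Base change along a field extension preserves dimension for finite type schemes, so $\dim X_\CC = \dim X_0 = d$.

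Now I would run the descent. Steenbrink's theorem over $\CC$, applied to $X_\CC$, gives $\H^q \DB_{X_\CC/\CC}^p = 0$ since $p + q > \dim X_\CC$. By Lemma \ref{lemma: DB-change-of-field}(2) for the extension $k_0 \subset \CC$ we have $\DB_{X_\CC/\CC}^p \simeq \DB_{X_0/k_0}^p \otimes_{k_0} \CC$, and as $k_0 \to \CC$ is flat, tensoring commutes with taking cohomology sheaves, so $(\H^q \DB_{X_0/k_0}^p) \otimes_{k_0} \CC \simeq \H^q \DB_{X_\CC/\CC}^p = 0$; faithful flatness of the field extension $k_0 \subset \CC$ then forces $\H^q \DB_{X_0/k_0}^p = 0$. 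Applying Lemma \ref{lemma: DB-change-of-field}(2) once more, now to $k_0 \subset F$, gives $\DB_{X/F}^p \simeq \DB_{X_0/k_0}^p \otimes_{k_0} F$, whence $\H^q \DB_{X/F}^p \simeq (\H^q \DB_{X_0/k_0}^p) \otimes_{k_0} F = 0$, as required.

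The only substantive input is Steenbrink's theorem over $\CC$; the rest is formal. The point demanding the most care is the first reduction: one must pass to a subfield of finite transcendence degree over $\QQ$ so that an embedding into $\CC$ is available (an arbitrary characteristic-zero field need not embed into $\CC$), and one must know that this base change leaves the dimension unchanged. Granting that, the descent to $k_0$ and the ascent to $F$ are immediate consequences of the faithful flatness of field extensions together with the base change isomorphism of Lemma \ref{lemma: DB-change-of-field}(2).
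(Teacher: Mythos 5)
Your proposal is correct and takes essentially the same route as the paper: the paper cites Steenbrink's theorem for $F=\CC$ and states that the general case "follows from Lemma \ref{lemma: DB-change-of-field}," which is precisely the argument you spell out — spread $X$ out to a finitely generated subfield $k_0\subset F$, embed $k_0$ into $\CC$, apply Steenbrink there, and transport the vanishing through the flat base-change isomorphism $\DB_{X\times_F F'/F'}^p \simeq \DB_{X/F}^p \otimes_F F'$ of Lemma \ref{lemma: DB-change-of-field}(2). This is the same standard reduction the paper invokes elsewhere (see the footnote in the proof of Lemma \ref{lemma:lci-DB-def}), so there is nothing to add.
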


In view of Lemma \ref{lemma: DB-change-of-field}(1), Steenbrink's vanishing theorem need not hold for Du Bois complexes over a subfield $k\subset F$. Instead, we have the following weaker statement:
\begin{proposition}\label{prop:DB-vanishing-over-subfield}
    Let $X$ be a variety over $F$, and $k\subset F$ a subfield. If $X$ has pre-$m$-Du Bois singularities, then \[\H^q \DB_{X/k}^p = 0\] for all $q\ge \dim X-m-1$ and $q>0$.
\begin{proof}
    It suffices to prove the result when $p\ge m+1$. By \cite{PSV}*{Proposition C} and Steenbrink vanishing (and reduction to the case $F=\CC$ as usual), we have that $\H^q \DB_{X/F}^p=0$ for $q\ge \dim X -p\ge \dim X-m-1$. It then follows from the spectral sequence in Lemma \ref{lemma: DB-change-of-field}(1) that $\H^q\DB^p_{X/k}=0$ for $q\ge \dim X-m-1$.
\end{proof}
\end{proposition}

Complementary to Steenbrink's theorem, we have vanishing results for $\H^q\DB_{X/k}^p$ when $p+q$ is small:
\begin{theorem}[\cite{MP-lci}]\label{thm:MP-vanishing}
    Let $X$ be a local complete intersection over $F$, and $k\subset F$ a subfield. Denote by $X_{\sing}$ the singular locus of $X$. Then for all $p\ge 0$, we have
    \[\H^q \DB_{X/k}^p = 0\]
    for $1\le q< \codim X_{\sing}-p-1$. 
    \begin{proof}
        When $F=\CC$, this is \cite{MP-lci}*{Corollary 13.9}. The result for $F$ follows from Lemma \ref{lemma: DB-change-of-field}(2), and the statement for a subfield $k\subset F$ follows by analyzing the spectral sequence in Lemma \ref{lemma: DB-change-of-field}(1).
    \end{proof}
\end{theorem}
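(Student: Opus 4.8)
The plan is to establish the statement first over $\CC$, where it is a deep input from Hodge module theory, and then to propagate it to an arbitrary base field $F$ of characteristic zero and finally to a subfield $k\subset F$ using only flat descent together with the base-change spectral sequences of Lemma~\ref{lemma: DB-change-of-field}. I would begin by recording the complex case: for a local complete intersection $X$ over $\CC$ one has $\H^q\DB_{X/\CC}^p=0$ whenever $1\le q<\codim X_{\sing}-p-1$, which is \cite{MP-lci}*{Corollary 13.9}. This is the one genuinely nontrivial ingredient, resting on the analysis of the Ext and Hodge filtrations on the local cohomology $\H^r_X\O_Y$, and I would treat it as a black box.

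To pass from $\CC$ to a general $F$, I would run the standard spreading-out argument already used in the footnote to Lemma~\ref{lemma:lci-DB-def}. Write $X\simeq X_0\times_{F_0}F$ for a subfield $F_0\subset F$ finitely generated over $\QQ$, and fix an embedding $F_0\hookrightarrow\CC$. The lci property and the codimension of the singular locus are unchanged under these faithfully flat field extensions, so $X_0\times_{F_0}\CC$ is an lci over $\CC$ with $\codim X_{\sing}$ equal to that of $X$. By Lemma~\ref{lemma: DB-change-of-field}(2) we have $\DB^p_{X_0\times_{F_0}\CC/\CC}\simeq\DB^p_{X_0/F_0}\otimes_{F_0}\CC$; applying the complex case and faithful flatness of $\CC$ over $F_0$ yields $\H^q\DB^p_{X_0/F_0}=0$ in the asserted range, and tensoring up along $F_0\subset F$ gives $\H^q\DB^p_{X/F}=0$ for $1\le q<\codim X_{\sing}-p-1$.

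For the subfield case I would feed the vanishing over $F$ into the second spectral sequence of Lemma~\ref{lemma: DB-change-of-field}(1),
\[E_1^{ij}=\Omega^i_{F/k}\otimes_F\H^{i+j}\DB^{p-i}_{X/F}\implies\H^{i+j}\DB^p_{X/k}.\]
Fix a target degree $q$ with $1\le q<\codim X_{\sing}-p-1$. The terms contributing to $\H^q\DB^p_{X/k}$ lie on the antidiagonal $i+j=q$ with $0\le i\le p$, namely $\Omega^i_{F/k}\otimes_F\H^q\DB^{p-i}_{X/F}$. Since $q\ge 1$ and, for every $i\ge 0$,
\[q<\codim X_{\sing}-p-1\le\codim X_{\sing}-(p-i)-1,\]
the vanishing over $F$ forces every such $E_1^{ij}$ to vanish; hence $E_\infty^{ij}=0$ along this antidiagonal and $\H^q\DB^p_{X/k}=0$.

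The genuine obstacle is entirely contained in the complex case, which I am importing. Within the descent argument the only point requiring care is that the shift from $\DB^p$ to $\DB^{p-i}$ in the spectral sequence does not shrink the vanishing range: decreasing the exponent from $p$ to $p-i$ enlarges the Mustaţă--Popa bound $\codim X_{\sing}-(p-i)-1$, so the hypothesis $q<\codim X_{\sing}-p-1$ already places $q$ inside the vanishing window for each contributing term and no loss occurs. A secondary routine check is that forming the singular locus commutes with the field extensions used, which holds because regularity equals smoothness in characteristic zero and is stable under field extension.
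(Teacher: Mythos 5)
Your proof is correct and follows essentially the same route as the paper's: the complex case is imported from \cite{MP-lci}*{Corollary 13.9}, the passage to general $F$ uses Lemma \ref{lemma: DB-change-of-field}(2) (via the same spreading-out device the paper employs in the footnote to Lemma \ref{lemma:lci-DB-def}), and the subfield case is handled by the spectral sequence of Lemma \ref{lemma: DB-change-of-field}(1), with the key observation that lowering the exponent from $p$ to $p-i$ only enlarges the vanishing range. Your write-up simply makes explicit the details the paper compresses into two sentences.
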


Beyond local complete intersections, we have:
\begin{theorem}[\cites{PSV,Kovacs-inj}]\label{thm:PSV-vanishing}
    Let $X$ be a variety over $F$, and $k\subset F$ be a subfield. Then
    \begin{enumerate}
        \item $\H^q\DB_{X/F}^0 = \H^q\DB_{X/k}^0=0$ for $1\le q < \depth \O_X-\dim X_{\sing}-1$.
        \item If $X$ is pre-$(p-1)$-Du Bois with $\dim X_{\sing} = s$, then
        \[\H^q \DB_{X/F}^p = 0\] 
        for $1\le q< \depth \H^0\DB_{X/F}^p-s-1$. In particular, if $\depth \H^0\DB_{X/F}^p\ge \dim X-p$ for all $p\ge 0$, then
        \[\H^q\DB_{X/k}^p = 0\]
        for $1\le q < \dim X - p -s - 1$, and any subfield $k\subset F$.
        \end{enumerate}
    \begin{proof}
        Part (1) follows from \cite{PSV}*{Corollary B}, using Lemma \ref{lemma: DB-change-of-field}(2) and the fact $\DB_{X/F}^0=\DB_{X/k}^0$. Part (2) follows from \cite{PSV}*{Theorem A}, \cite{Kovacs-inj}*{Theorem 1.1} and Lemma \ref{lemma: DB-change-of-field}. 
    \end{proof}
\end{theorem}

\section{\texorpdfstring{Higher Du Bois singularities and $K$-regularity}{Higher Du Bois singularities and K-regularity}}

We start by establishing a general relationship between higher Du Bois singularities and $K$-regularity. Later, we will apply vanishing theorems for the Du Bois complexes to improve these results under additional hypothesis.
\begin{proposition}($\implies$ Theorem \ref{intro-thm:relation-lci}(1))\label{prop:relation-general}
    Let $X$ be a variety of dimension $d$. 
    \begin{enumerate}
        \item If $X$ has $m$-Du Bois singularities, then it is $K_{-d+m+1}$-regular;
        \item Suppose $X$ is either affine, or has isolated singularities. If $X$ is $K_m$-regular, then it is pre-$(m+1)$-Du Bois.
    \end{enumerate}
\begin{proof}
(1) Since $X$ has $m$-Du Bois singularities in the sense of \cite{SVV}, the following conditions are satisfied:
\begin{itemize}
            \item $X$ is seminormal;
            \item $X$ has pre-$m$-Du Bois singularities;
            \item $\codim X_{\sing}\ge m+2$ if $m\ge 1$.
        \end{itemize}

To show $X$ is $K_{-d+m+1}$-regular, by the local-to-global spectral sequence
\[E_2^{pq} = H^p(X, \N\K_q(X)) \implies NK_{q-p}(X),\]
it suffices to show 
\[H^p(X, \N\K_q(X))=0 \text{ for all $q-p \le -d+m+1$}.\]
By Theorem \ref{thm:CHWW-local}, this is equivalent to showing
\[H^p(X, \H^s C^t) =0 \text{ for $s-t=-q$ and $q-p\le -d+m+1$},\]
where $C^t$ is the cone of the natural map $\LL_{X/\QQ}^t\to \DB_{X/\QQ}^t$. 

We verify this through a case-by-case analysis:
\begin{itemize}
    \item If $s>0$ and $t\le m$, we have $\H^s C^t = 0$ because $X$ is pre-$m$-Du Bois;
    \item If $s>0$ and $t\ge m+1$, then $p\ge q+d-m-1 = t-s+d-m-1\ge d-s$. By Proposition \ref{prop:DB-complex-support},
    \[\dim \supp \H^s C^t = \dim \supp \H^s \DB_{X/\QQ}^t\le d-s-1,\]
    Thus, we have $H^p(\H^s C^t)=0$.
    \item If $s\le 0$, then $q\ge s+q = t\ge 0$, and $p\ge q+d-m-1\ge d-m-1$. By assumption, the sheaves $\H^s C^t$ are supported on $X_{\sing}$, which has dimension $\le d-m-2$ when $m\ge 1$, so we have $H^p(\H^s C^t)=0$. 
    
    \noindent For $m=0$, some further caseworks are needed:
    \begin{itemize}
        \item If $t=0$, then $\H^s C^t = 0$;
        \item If $t\ge 1$, then $p\ge q+d-1 = d-1-s+t\ge d$, so $H^p(\H^s C^t)=0$, because $\H^s C^t$ is supported on $X_{\sing}$, which has dimension $\le d-1$.
    \end{itemize}

\end{itemize}
This completes the proof that $X$ is $K_{-d+m+1}$-regular.

(2) Suppose $X$ is affine and $K_m$-regular. By Theorem \ref{thm:CHWW-main}, we have \[\H^s C^t = 0 \text{ for $s-t\ge -m$}.\]
In particular, taking $s>0$, this implies 
\[\H^s\DB_{X/\QQ}^p=0\]
for all $p\le m+1$ and $s>0$, i.e. that $X$ is pre-$(m+1)$-Du Bois.

Now, suppose $X$ has isolated singularities. Then, the spectral sequence
\[E_2^{pq} = H^p(X,\NK_q(X))\implies NK_{q-p}(X)\]
degenerates, giving that the local $NK$-groups $\NK_q(X)=0$ for $q\le m$. It then follows from the affine case that $X$ is pre-$(m+1)$-Du Bois.
\end{proof}
\end{proposition}

Proposition \ref{prop:relation-general}(1) allows us to construct many examples of $K_{-d+m+1}$-regular varieties:

\begin{example}\label{ex: non-lci-K-regular}
    We indicate some ways to construct varieties with $m$-Du Bois singularities:
    \begin{enumerate}
        \item A codimension $r$ local complete intersection $X$ has $m$-Du Bois singularities if and only if its minimal exponent $\tilde{\alpha}_X\ge m+r$; see \cites{Saito-min-exponent,JKSY, MOPW, CDM}. Playing around with the minimal exponents, one can construct varieties with $m$-Du Bois singularities;
        \item For toric varieties and cones over smooth projective varieties, one can find criteria for them to have $m$-Du Bois singularities in \cite{SVV}*{Section 6-7}.
        \item Criteria for secant varieties to have $m$-Du Bois singularities are worked out in \cite{secant}.
    \end{enumerate}

    By Proposition \ref{prop:relation-general}(1), these provide examples of $K_{-d+m+1}$-regular varieties.
\end{example}

Next, applying vanishing theorems in Section \ref{sec:vanishing-thm}, we obtain the following strengthening of Proposition \ref{prop:relation-general} in the local setting: 

\begin{proposition}[$\implies$ Theorem \ref{intro-thm:relation-lci}(2)]
    \label{prop:DB-implies-neg-regular-affine-case}
    Let $X$ be a $d$-dimensional variety over $F$ with pre-$m$-Du Bois singularities. Then
    \begin{enumerate}
        \item $X$ is locally $K_{-t}$-regular if $t\ge \max\{1, d-2m-2\}$;
        \item If $X$ is seminormal, then it is locally $K_0$-regular if $m\ge \frac{d-2}{2}$;
        \item If $X$ is seminormal, and the natural map $\Omega_{X/F}^1\to \H^0\DB_{X/F}^1$ is surjective, then it is locally $K_1$-regular if $m\ge \frac{d-1}{2}$;
        \item Let $s=\dim X_{\sing}$. Then $X$ is $K_{-t}$-regular if $t\ge \max\{s+1, d-2m-2+s\}$; if $X$ is in addition seminormal, then the bound can be improved to $t\ge \max\{s, d-2m-2+s\}$.
        \item If $X$ is either affine, or has isolated singularities, then we can remove the adjective ``locally'' in (1), (2) and (3).
    \end{enumerate}
\begin{proof}
    (1) By Theorem \ref{thm:CHWW-local}, we need to show
    \[\H^q C^p = 0 \text{ for } q-p\ge \max\{1, d-2m-2\},\]
    where $C^p = \Cone(\LL^p_{X/\QQ}\to \DB^p_{X/\QQ})$. Since $q\ge q-p\ge 1$, this is equivalent to 
    \[\H^q \DB_{X/\QQ}^p = 0 \text{ for } q-p\ge d-2m-2.\]
    Since $X$ has pre-$m$-Du Bois singularities, this holds when $p\le m$. When $p\ge m+1$, we have $q\ge d-2m-2+p\ge d-m-1$, in which case the vanishing follows from Proposition \ref{prop:DB-vanishing-over-subfield}.

    (2) By (1), it suffices to show $\H^0 C^0 \simeq \coker(\O_X\to \H^0 \DB_{X/\QQ}^0) =0$. This is true by \cite{Saito-seminormalization}*{Corollary 0.3}, since $X$ is assumed to be seminormal.

    (3) By (1) and (2), since $\H^{-1} C^0=0$, it suffices to show $\H^0 C^1 \simeq \coker(\Omega_{X/\QQ}^1\to \H^0 \DB_{X/\QQ}^1) =0$. This follows by assumption and Lemma \ref{lemma: DB-change-of-field}(1).

        (4) Consider the local-to-global spectral sequence
    \[
        E_2^{pq}=H^p(X,\NK_q)\implies NK_{q-p}(X).
    \]
    Since $\NK_q$ is supported on $X_{\sing}$ and $\dim X_{\sing}=s$, we have
    \[
        H^p(X,\NK_q)=0 \qquad \text{for } p>s.
    \]
    On the other hand, if $p-q\ge d-2m-2+s$, then for every $0\le p\le s$ we have
    \[
        q=(q-p)+p\le -(d-2m-2+s)+s = -d+2m+2.
    \]
    Note also that $q=p-t\le p-s-1<0$. Hence $\NK_q=0$ by part (1). It follows that $NK_{-t}(X)=0$ for $t\ge 1$ and $t\ge d-2m-2+s$. That is, $X$ is $K_{-t}$-regular. When $X$ is in addition seminormal, the assertion follows by invoking (2) in place of (1).
    
    (5) If $X$ is either affine, or has isolated singularities, the local-to-global spectral sequence 
\[E_2^{pq} = H^p(X,\NK_q(X))\implies NK_{q-p}(X)\]
degenerates, so local $K_{-d+2m+2}$-regularity implies $K_{-d+2m+2}$-regularity. 
\end{proof}
\end{proposition}

As an application of this Proposition, we give some examples that are $K_0$-regular:

\begin{example}
    We list some varieties with pre-$k$-Du Bois singularities for all $k$. These have ``mild" singularities from the perspective of Hodge theory. By Proposition \ref{prop:DB-implies-neg-regular-affine-case}, such an $X$ is locally
    \begin{enumerate}
        \item $K_n$-regular for all $n<0$;
        \item $K_0$-regular if $X$ is seminormal.
    \end{enumerate}
    
    Examples include: toric varieties \cite{GNPP}, finite or more generally geometric quotients (\cite{DB}), cones over varieties satisfying Bott vanishing \cite{K-cones}, \cite{SVV}*{Corollary 7.6}. Since all these varieties are normal, they are locally $K_0$-regular.
\end{example}

Next, we investigate the converse to the implication in Proposition \ref{prop:DB-implies-neg-regular-affine-case}:

\begin{proposition}[$\implies$ Theorem \ref{intro-thm:relation-lci}(3)]
    \label{prop:negative-regular-implies-DB}
    Let $X$ be a $d$-dimensional affine variety with $\dim X_{\sing} = s$. Suppose either of the following holds:
    \begin{enumerate}
        \item $X$ is a local complete intersection;
        \item $\depth \H^0\DB_X^p\ge d-p$ for all $p\ge 0$.
    \end{enumerate} 
    
    If $X$ is $K_{-d+2m+1+s}$-regular, then it is pre-$m$-Du Bois.
    \begin{proof}
        Since $X$ is $K_{-d+2m+1+s}$-regular, we know by Theorem \ref{thm:CHWW-main} that
        \[\H^i \DB_{X/\QQ}^j = 0 \text{ for $i\ge d-2m-1-s+j$ and $i>0$}.\]
        Applying Theorem \ref{thm:MP-vanishing} in case (1), and Theorem \ref{thm:PSV-vanishing} in case (2) (with $k=\QQ$ and $F$ the field of definition of $X$), we have
        \[\H^i\DB_{X/\QQ}^j = 0 \text{ for $1\le i\le d-s-2-j$}.\]
        For $j\le m$, the two inequalities of $i$ cover all indices $i\ge 1$. It follows that $X$ is pre-$m$-Du Bois.
        \end{proof}
    \end{proposition}    

Combining the two Propositions above, we obtain:
\begin{corollary}
    \label{cor:equiv-using-vanishing-thm}
    Let $X$ be a $d$-dimensional variety with isolated singularities. Suppose one of the following holds:
    \begin{enumerate}
        \item $X$ is a local complete intersection;
        \item $\depth \H^0\DB_X^p\ge d-p$ for all $p\ge 0$.
    \end{enumerate}
    Then
    \begin{align*}
        \text{$X$ has pre-$m$-Du Bois singularities} 
        &\iff \text{$X$ is $K_{-d+2m+1}$-regular}  \\
        &\iff \text{$X$ is $K_{-d+2m+2}$-regular}.    
    \end{align*}
    when $-d+2m+2<0$.
\begin{proof}
    By Proposition \ref{prop:DB-implies-neg-regular-affine-case} and Proposition \ref{prop:negative-regular-implies-DB} (applied with $s=0$), we have the implications
    \[\text{$K_{-d+2m+1}$-regular} \implies \text{pre-$m$-Du Bois} \implies \text{$K_{-d+2m+2}$-regular}.\]
    Since $K_{-d+2m+2}$-regular implies $K_{-d+2m+1}$-regular, all three conditions are in fact equivalent.
\end{proof}
\end{corollary}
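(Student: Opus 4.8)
The plan is to assemble Corollary \ref{cor:equiv-using-vanishing-thm} as a direct combination of the two preceding propositions, specialized to the isolated singularities case $s = 0$. The logical skeleton is a cyclic chain of implications: I would first invoke Proposition \ref{prop:negative-regular-implies-DB} to obtain that $K_{-d+2m+1}$-regularity implies pre-$m$-Du Bois (this is the proposition applied with $s = 0$, so that $-d + 2m + 1 + s = -d + 2m + 1$), then invoke Proposition \ref{prop:DB-implies-neg-regular-affine-case}(4) to obtain that pre-$m$-Du Bois implies $K_{-d+2m+2}$-regular (part (1) of that proposition gives the local statement for $m \le \frac{d-3}{2}$, and part (4) upgrades ``locally $K$-regular'' to ``$K$-regular'' precisely because $X$ has isolated singularities). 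Finally, I would close the cycle using the standard monotonicity fact, cited earlier from \cite{Vorst} and \cite{Dayton-Weibel}, that $K_{n}$-regularity implies $K_{n-1}$-regularity, so $K_{-d+2m+2}$-regular implies $K_{-d+2m+1}$-regular.

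Concretely, the three statements sit in the implication loop
\[
\text{$K_{-d+2m+1}$-regular} \implies \text{pre-$m$-Du Bois} \implies \text{$K_{-d+2m+2}$-regular} \implies \text{$K_{-d+2m+1}$-regular},
\]
where the first arrow is Proposition \ref{prop:negative-regular-implies-DB}, the second is Proposition \ref{prop:DB-implies-neg-regular-affine-case}, and the third is monotonicity of $K$-regularity. Since the loop visits all three conditions, they are pairwise equivalent, which is exactly the claim. I would be careful to note that the hypothesis $-d + 2m + 2 < 0$ is what guarantees we remain in the range of negative $K$-regularity where Theorem \ref{thm:CHWW-main} and the vanishing theorems apply cleanly; in particular it forces $m \le \frac{d-3}{2}$, which is the constraint required by Proposition \ref{prop:DB-implies-neg-regular-affine-case}(1), and it ensures the relevant indices stay in the negative range so that no low-degree correction terms (such as the $\coker$ and $\ker$ terms at $n = i-1, i$ in Theorem \ref{thm:CHWW-main}) interfere.

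I do not expect a genuine obstacle here, since all the real content has been front-loaded into Propositions \ref{prop:DB-implies-neg-regular-affine-case} and \ref{prop:negative-regular-implies-DB}; the corollary is essentially bookkeeping. The only point requiring mild care is verifying that the numerical range hypotheses of the two propositions are simultaneously satisfied under $-d + 2m + 2 < 0$: Proposition \ref{prop:DB-implies-neg-regular-affine-case}(1) needs $m \le \frac{d-3}{2}$, i.e. $2m + 2 \le d - 1$, i.e. $-d + 2m + 2 \le -1 < 0$, which is precisely the stated hypothesis; and Proposition \ref{prop:negative-regular-implies-DB} requires no upper bound on $m$ beyond what is needed for the indices to make sense, so applying it with $s = 0$ poses no difficulty. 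Thus the proof is simply a matter of citing the two propositions in the correct order and appealing to the monotonicity of $K$-regularity to collapse the chain into a set of equivalences.
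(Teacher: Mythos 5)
Your proposal is correct and is essentially identical to the paper's own proof: the same cyclic chain using Proposition \ref{prop:negative-regular-implies-DB} with $s=0$, Proposition \ref{prop:DB-implies-neg-regular-affine-case} (parts (1) and (4), with the isolated singularities hypothesis upgrading local to global $K$-regularity), and the monotonicity $K_{n}$-regular $\implies K_{n-1}$-regular to close the loop. Your verification that $-d+2m+2<0$ is equivalent to the constraint $m\le \frac{d-3}{2}$ needed in Proposition \ref{prop:DB-implies-neg-regular-affine-case}(1) is a nice touch of bookkeeping that the paper leaves implicit.
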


When $m=0$, this gives:

\begin{corollary}\label{cor:equiv-using-vanishing-thm-k=0}
    Let $X$ be an affine variety of dimension $d\ge 2$. Suppose $X$ is normal, Cohen-Macaulay with isolated singularities. Then the following are equivalent:
    \begin{enumerate}
        \item $X$ has Du Bois singularities;
        \item $X$ has pre-$0$-Du Bois singularities;
        \item $X$ is $K_{-d+1}$-regular;
        \item $X$ is $K_{-d+2}$-regular.
    \end{enumerate}
    \begin{proof}
    Since $X$ is Cohen-Macaulay, condition (2) in Corollary \ref{cor:equiv-using-vanishing-thm} is satisfied with $m=0$. We thus have $(2)\iff (3) \iff (4)$.

    Since $X$ is normal, hence seminormal, the equivalence $(1)\iff (2)$ follows from the well-known fact that $\H^0\DB_{X/\QQ}^0 \simeq \H^0\DB_{X/\CC}^0 \simeq \O_{X^{sn}}$, the structure sheaf of the semi-normalization of $X$.
    \end{proof}
\end{corollary}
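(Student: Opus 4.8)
The plan is to split the four-way equivalence into the \emph{$K$-theoretic} chain $(2)\iff(3)\iff(4)$, which I would extract from the machinery already packaged in Corollary~\ref{cor:equiv-using-vanishing-thm}, and the \emph{formal} equivalence $(1)\iff(2)$, which should follow by unwinding the definition of Du Bois singularities once normality is used. Almost all of the genuine content---the trace-method description of $NK$ in Theorem~\ref{thm:CHWW-main} together with the vanishing theorems of Section~\ref{sec:vanishing-thm}---has already been absorbed into Propositions~\ref{prop:DB-implies-neg-regular-affine-case} and \ref{prop:negative-regular-implies-DB}, so the task here is only to check that the present hypotheses feed correctly into those results.

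For $(2)\iff(3)\iff(4)$ I would apply Corollary~\ref{cor:equiv-using-vanishing-thm} with $m=0$ and $s=\dim X_{\sing}=0$ (isolated singularities). The single point to verify is hypothesis~(2) of that corollary, the depth bound $\depth\H^0\DB_X^p\ge d-p$. The case $p=0$ is the decisive one: normality gives $X=X^{\mathrm{sn}}$, hence $\H^0\DB_X^0\simeq\O_{X^{\mathrm{sn}}}=\O_X$ by \cite{Saito-seminormalization}*{Corollary 0.3}, and the Cohen--Macaulay hypothesis then gives $\depth\O_X=d$. In the $m=0$ argument only this $p=0$ instance is actually consumed, since the vanishing statement in Theorem~\ref{thm:PSV-vanishing}(2) for $\H^q\DB_X^p$ with $p\ge 1$ is predicated on pre-$(p-1)$-Du Bois, a hypothesis that is vacuous precisely when $p=0$. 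Granting this, Propositions~\ref{prop:DB-implies-neg-regular-affine-case} and \ref{prop:negative-regular-implies-DB} supply $K_{-d+1}\text{-regular}\Rightarrow\text{pre-}0\text{-Du Bois}\Rightarrow K_{-d+2}\text{-regular}$, and since $K_{-d+2}$-regularity implies $K_{-d+1}$-regularity the three conditions coincide.

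The equivalence $(1)\iff(2)$ I expect to be essentially formal. By definition $X$ has Du Bois singularities exactly when the canonical map $\O_X\to\DB_X^0$ is an isomorphism in $D^b_{\mathrm{coh}}(X)$. This map factors through $\O_X\to\H^0\DB_X^0\simeq\O_{X^{\mathrm{sn}}}$, which is an isomorphism because $X$ is normal; hence $\O_X\to\DB_X^0$ is a quasi-isomorphism if and only if $\H^i\DB_X^0=0$ for all $i>0$, i.e.\ if and only if $X$ is pre-$0$-Du Bois. Here I would use that $\DB_{X/\QQ}^0=\DB_{X/\CC}^0$, so the base field plays no role for $p=0$.

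The main subtlety I foresee is the boundary dimension $d=2$: there $-d+2m+2=0$ is not negative, so Corollary~\ref{cor:equiv-using-vanishing-thm} does not literally apply and Proposition~\ref{prop:DB-implies-neg-regular-affine-case}(1) (which requires $m\le\frac{d-3}{2}$) is unavailable. In that case I would close the loop by hand: Proposition~\ref{prop:negative-regular-implies-DB} still gives $K_{-1}\text{-regular}\Rightarrow\text{pre-}0\text{-Du Bois}$, while Proposition~\ref{prop:DB-implies-neg-regular-affine-case}(2) (using seminormality and $m=0\ge\frac{d-2}{2}$) together with part~(4) gives $\text{pre-}0\text{-Du Bois}\Rightarrow K_0=K_{-d+2}\text{-regular}$, and $K_0\Rightarrow K_{-1}$ completes the equivalence. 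Apart from this boundary bookkeeping, the only real verifications are the depth computation and the identification $\H^0\DB_X^0=\O_X$, both immediate from the normal Cohen--Macaulay hypothesis.
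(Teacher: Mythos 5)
Your proposal is correct and follows essentially the same route as the paper: the chain $(2)\iff(3)\iff(4)$ is obtained from Corollary \ref{cor:equiv-using-vanishing-thm} (i.e., from Propositions \ref{prop:DB-implies-neg-regular-affine-case} and \ref{prop:negative-regular-implies-DB} with $m=0$, $s=0$), with the Cohen--Macaulay hypothesis supplying the depth input, and $(1)\iff(2)$ follows from normality together with the identification $\H^0\DB_X^0\simeq\O_{X^{sn}}$. One point where you are more careful than the paper itself: the paper's proof simply cites Corollary \ref{cor:equiv-using-vanishing-thm}, whose statement carries the restriction $-d+2m+2<0$ and so, for $m=0$, literally covers only $d\ge 3$; your explicit patch for the boundary case $d=2$ --- Proposition \ref{prop:negative-regular-implies-DB} for $K_{-1}$-regular $\Rightarrow$ pre-$0$-Du Bois, and Proposition \ref{prop:DB-implies-neg-regular-affine-case}(2),(4) (seminormality plus affineness) for pre-$0$-Du Bois $\Rightarrow$ $K_0$-regular --- is exactly what is needed to justify the stated range $d\ge 2$. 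Likewise, your remark that only the $p=0$ instance of the depth hypothesis $\depth\H^0\DB_X^p\ge d-p$ is actually consumed when $m=0$ makes explicit the step the paper's phrase ``condition (2) is satisfied with $m=0$'' glosses over, since Cohen--Macaulayness alone does not give the condition for $p\ge 1$.
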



\begin{remark}
    The assumptions in Corollary \ref{cor:equiv-using-vanishing-thm-k=0} are satisfied for normal surfaces, so we have the equivalences
    \begin{enumerate}
        \item $X$ is $K_{-1}$-regular;
        \item $X$ is $K_0$-regular;
        \item $X$ has Du Bois singularities;
        \item $X$ has pre-$0$-Du Bois singularities;
        \item $X$ has pre-$1$-Du Bois singularities.
    \end{enumerate}
    The equivalence of (1)-(3) with (4) is remarked in \cite{PSV}*{Example 3.5}. See also Lemma \ref{lemma:DB-table-constraint}.

    In \cite{Weibel-surface}*{Theorem 5.9}, characterizations for $K_{-1}$-regularity that are more geometric in nature are obtained. For example, given a resolution of singularities $\tilde{X}\to X$ with exceptional divisor $E$, it is shown in \textit{loc. cit.} that $X$ is $K_{-1}$-regular if and only if    
    \[H^1(E, \I/\I^n)=0 \text{ for all }n\ge 2,\]
    where $\I$ is the ideal of $\O_{\tilde{X}}$ defining $E$.

    One can directly see that this condition characterizes Du Bois singularities for normal surfaces, as follows: since $X$ is seminormal, we have $\H^0\DB_X^0=\O_X$; moreover, by dimension reasons $\H^2\DB_X^0=0$. It follows that $X$ has Du Bois singularities if and only if $\H^1 \DB_X^0=0$.
    
    By Steenbrink's triangle for the Du Bois complexes, 
\[\DB_X^0 \to \R f_*\O_{\tilde{X}}(-E)\to \O_x \xto{+1},\]
where $x$ is the isolated singularity of $X$. We thus see that
\[\H^1\DB_X^0 = R^1 f_*\O_{\tilde{X}}(-E),\]
which by the Formal Function Theorem is
\[R^1f_*\O_{\tilde{X}}(-E) = \varprojlim_n H^1(E_n, \O_{\tilde{X}}(-E)|_{E_n}) = \varprojlim_n H^1(E, I/I^n).\]
Since $H^2(E,\blank)=0$, from the short exact sequence
\[0\to I^n/I^{n+1} \to I/I^{n+1} \to I/I^{n}\to 0\]
we see that for all $n\ge 1$,
\[H^1(E,I/I^n)=0 \implies H^1(E,I/I^{n+1})=0.\]
It follows that
\begin{align*}
X \text{ is Du Bois} 
&\iff R^1 f_*\O_{\tilde{X}}(-E) =\varprojlim_n H^1(E, I/I^n)= 0 \\
&\iff  H^1(E, I/I^n) = 0 \text{ for sufficiently large $n$} \\
&\iff H^1(E, I/I^n) = 0 \text{ for all $n$}.
\end{align*}
I am grateful to Sung Gi Park for telling me about this argument. 
\end{remark}

\begin{remark}
    For a curve $C$, the works of Traverso \cite{Traverso}, Vorst \cite{Vorst} and Weibel \cite{Weibel-surface} show that 
    \begin{itemize}
        \item $C$ is always $K_{-1}$-regular;
        \item $C$ is $K_0$-regular $\iff$ $C$ is $K_1$-regular $\iff$ $C$ is $\Pic$-regular $\iff$ $C$ is seminormal. This fits into our Hodge theory picture, as a curve is seminormal if and only if it has Du Bois singularities.
        \item If $C$ is $K_2$-regular, then it is smooth.
    \end{itemize}
\end{remark}

With a bit more work, one can extend Corollary \ref{cor:equiv-using-vanishing-thm} beyond the bound $m\le \frac{d-3}{2}$ for local complete intersections of dimension $\ge 2$. We will need the smoothness criterion given by Theorem \ref{thm:MP-codim-bound}, as well as the following result about the torsion and cotorsion of the K\"ahler differentials. See \cite{Kahler-diff-hypersurface}*{Theorem 1.11} for the case of hypersurfaces, and \cite{Kahler-diff-lci}*{Corollary 3.1} for local complete intersections.
\begin{theorem}[Graf, Miller-Vassiliadou]\label{thm:reflexive-Kahler-diff}
    Let $X$ be a local complete intersection over $F$. Then the K\"ahler differential $\Omega_{X/F}^p$ is 
    \begin{enumerate}
        \item Torsion-free if and only if $p\le \codim X_{\sing}-1$;
        \item Reflexive, if and only if $p\le \codim X_{\sing}-2$. If this is the case, we have $\Omega_{X/F}^p\simeq \H^0\DB_{X/F}^p$.
    \end{enumerate}
\end{theorem}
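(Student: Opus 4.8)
The plan is to treat the torsion-free and reflexivity criteria as direct citations, and to supply a self-contained argument only for the final identification. Parts (1) and (2), apart from the isomorphism $\Omega_{X/F}^p \simeq \H^0\DB_{X/F}^p$, are precisely the results of Graf (for hypersurfaces, \cite{Kahler-diff-hypersurface}*{Theorem 1.11}) and Miller--Vassiliadou (for local complete intersections, \cite{Kahler-diff-lci}*{Corollary 3.1}), so I would simply invoke these. All that remains is to prove the comparison $\Omega_{X/F}^p \simeq \H^0\DB_{X/F}^p$ under the hypothesis $p \le \codim X_{\sing} - 2$.

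For the setup, note that when $\Omega_{X/F}^p$ is reflexive we have $\codim X_{\sing} \ge p+2 \ge 2$; since a local complete intersection is Cohen--Macaulay (in particular $S_2$), Serre's criterion shows $X$ is normal. Write $U := X \setminus X_{\sing}$ with open immersion $j: U \ito X$, so that the complement has codimension $\ge 2$. On $U$ the Du Bois complex is the ordinary de Rham complex of a smooth variety, so $\H^0\DB_{X/F}^p|_U = \Omega_{U/F}^p = \Omega_{X/F}^p|_U$, and the natural map $\Omega_{X/F}^p \to \H^0\DB_{X/F}^p$ is an isomorphism there.

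The argument then runs as follows. Both $\Omega_{X/F}^p$ and $\H^0\DB_{X/F}^p$ are torsion-free: the former by part (1) (since $p \le \codim X_{\sing}-1$), the latter by \cite{h-differential}*{Remark 3.8}. For any torsion-free sheaf $\F$ on the integral normal scheme $X$, the canonical map $\F \to j_* j^* \F$ is injective, as its kernel consists of sections supported on the nowhere-dense set $X_{\sing}$. Applying this to both sheaves and using the computation on $U$, I obtain a commuting triangle of injections $\Omega_{X/F}^p \ito \H^0\DB_{X/F}^p \ito j_* \Omega_{U/F}^p = (\Omega_{X/F}^p)^{\vee\vee}$, whose composite is the canonical map to the reflexive hull. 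By part (2) this composite is an isomorphism, so the factorization of an isomorphism through two injections forces each of them to be an isomorphism; in particular $\Omega_{X/F}^p \simeq \H^0\DB_{X/F}^p$.

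I expect no serious obstacle: the entire difficulty of the theorem sits in the cited torsion and reflexivity computations of Graf and Miller--Vassiliadou, which I take as given. The only point requiring care is that one need not establish reflexivity of $\H^0\DB_{X/F}^p$ independently — torsion-freeness of the $h$-differential together with the codimension-$2$ extension property on the normal variety $X$ already suffices to pin it between $\Omega_{X/F}^p$ and its reflexive hull.
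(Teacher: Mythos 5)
Your proposal is correct, and on the parts that carry the real weight it coincides with the paper: the paper gives no proof of this theorem at all, stating it as a pure citation to Graf \cite{Kahler-diff-hypersurface}*{Theorem 1.11} and Miller--Vassiliadou \cite{Kahler-diff-lci}*{Corollary 3.1}. The genuine added content in your write-up is the proof of the identification $\Omega_{X/F}^p\simeq \H^0\DB_{X/F}^p$, which is not literally in those references (they compare $\Omega_{X/F}^p$ with the reflexive differentials $j_*\Omega_{U/F}^p$, not with $\H^0\DB_{X/F}^p$), and which the paper leaves implicit. Your sandwich argument fills this in soundly: $X$ is normal by Serre's criterion (lci gives $S_2$, and $\codim X_{\sing}\ge p+2\ge 2$ gives $R_1$); $\H^0\DB_{X/F}^p$ is torsion-free by \cite{h-differential}*{Remark 3.8}, a fact the paper itself invokes in Remark \ref{rmk:no-non-lci-example}; both sheaves therefore inject into $j_*\Omega_{U/F}^p=(\Omega_{X/F}^p)^{\vee\vee}$, since sections vanishing on $U$ are torsion; the triangle commutes because maps into $j_*(\blank)$ are determined, by adjunction, by their restrictions to $U$, where all three maps are the identity; and an isomorphism factoring as $g\circ f$ with $g$ injective forces $f$, hence $g$, to be an isomorphism (reflexivity of $\Omega_{X/F}^p$ from part (2) is exactly what makes the composite an isomorphism). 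Two small points worth making explicit: normality only gives that $X$ is a disjoint union of integral normal varieties, so the torsion argument should be run component-wise; and one should record that the Du Bois complex localizes, so that $\H^0\DB_{X/F}^p|_U\simeq\Omega_{U/F}^p$ because $U$ is smooth. Neither affects the correctness of the argument.
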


\begin{corollary}\label{cor:equiv-lci-isolated}
    Let $X$ be a local complete intersection of dimension $d$ with isolated singularities. Then
    \begin{align*}
        \text{$X$ has $m$-Du Bois singularities} &\iff \text{$X$ is $K_{-d+2m+1}$-regular} \\
        &\iff \text{$X$ is $K_{-d+2m+2}$-regular}.
    \end{align*}
    \begin{proof}
        First, we show that
        \[\text{$m$-Du Bois} \implies \text{$K_{-d+2m+2}$-regular}.\]
        We check this case by case:
        \begin{itemize}
            \item If $-d+2m+2<0$, this follows by Proposition \ref{prop:DB-implies-neg-regular-affine-case}(1);
            \item If $-d+2m+2=0$, this follows by Proposition \ref{prop:DB-implies-neg-regular-affine-case}(2), because local complete intersections of dimension $\ge 2$ with isolated singularities are normal;
            \item If $-d+2m+2=1$, this follows by Proposition \ref{prop:DB-implies-neg-regular-affine-case}(3), because the surjectivity of $\Omega_{X/F}^1\to \H^0\DB_{X/F}^1$ (where $F$ is the field of definition of $X$) holds by Theorem \ref{thm:reflexive-Kahler-diff}, as long as $d\ge 3$;
            \item If $-d+2m+2\ge 2$, then $d\le 2m$. Since $X$ is $m$-Du Bois, it is smooth by Theorem \ref{thm:MP-codim-bound}, hence $K_{-d+2m+2}$-regular.
        \end{itemize}
    Next, we show that 
    \[ \text{$K_{-d+2m+1}$-regular}\implies\text{$m$-Du Bois}.\]
    By Proposition \ref{prop:negative-regular-implies-DB} applied with $s=0$, we know $X$ is pre-$m$-Du Bois. If $m\le d-2$, then $X$ is $m$-Du Bois by Theorem \ref{thm:reflexive-Kahler-diff}. Thus we may assume $m\ge d-1$. We consider several cases:
    \begin{itemize}
        \item If $d\ge 4$, then one can check that
        \[m\ge d-1 \ge \floor{\frac{d+1}{2}}\text{ and } \floor{\frac{d+1}{2}}\le d-2,\]
        so $X$ is $\floor{\frac{d+1}{2}}$-Du Bois, hence smooth by Theorem \ref{thm:MP-codim-bound};
        \item If $d=3$, then $m\ge 2$. By assumption $X$ is $K_2$-regular, so
        \begin{itemize}
            \item The map $\Omega_{X/\QQ}^2\to \H^0\DB_{X/\QQ}^2$ is surjective by Theorem \ref{thm:CHWW-local}. Since $X$ is $1$-Du Bois, Lemma \ref{lemma: DB-change-of-field}(1) implies the map $\Omega_{X/F}^2\to \H^0\DB_{X/F}^2$ is surjective;
            \item The map $\Omega_{X/F}^2\to \H^0\DB_{X/F}^2$ is injective by Theorem \ref{thm:reflexive-Kahler-diff}.
        \end{itemize}
        It follows that $X$ is $2$-Du Bois, hence smooth by Theorem \ref{thm:MP-codim-bound}.
        \item If $d=2$, then $m\ge 1$. By assumption $X$ is $K_1$-regular. Similar to the case $d=3$, one checks that $X$ is $1$-Du Bois, hence smooth.
        \item If $d=1$, then $m\ge 0$. If $m=0$, then $X$ is $K_0$-regular, hence $\Pic$-regular because $K_0(X)=\mathbb{Z}\oplus \Pic(X)$ for curves. By Traverso's theorem \cite{Traverso}*{Theorem 3.5}, $X$ is seminormal, hence has Du Bois singularities; when $m\ge 1$, $X$ is $K_2$-regular, hence smooth by Vorst's Theorem \cite{Vorst}*{Theorem 3.4}.
    \end{itemize}
\end{proof}
\end{corollary}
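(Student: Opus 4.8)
The plan is to reduce the asserted triple equivalence to the two implications
\[\text{$m$-Du Bois}\implies\text{$K_{-d+2m+2}$-regular}\qquad\text{and}\qquad\text{$K_{-d+2m+1}$-regular}\implies\text{$m$-Du Bois},\]
since $K_{-d+2m+2}$-regularity formally implies $K_{-d+2m+1}$-regularity, and chaining the three statements then closes the loop. For the first implication I would organize a case analysis according to the sign and size of $n:=-d+2m+2$. When $n<0$ — equivalently $m\le\frac{d-3}{2}$ — the conclusion is immediate from Proposition \ref{prop:DB-implies-neg-regular-affine-case}(1), since $m$-Du Bois singularities are in particular pre-$m$-Du Bois. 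The two boundary values $n=0$ and $n=1$ demand $K_0$- and $K_1$-regularity, which I would extract from parts (2) and (3) of the same proposition: a $d$-dimensional ($d\ge2$) lci with isolated singularities is normal, hence seminormal, and when $n=1$ one has $d\ge3$, so Theorem \ref{thm:reflexive-Kahler-diff}(2) furnishes the isomorphism $\Omega^1_{X/F}\simeq\H^0\DB_{X/F}^1$ that feeds the surjectivity hypothesis of part (3).

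When $n\ge2$ I would argue that $m$-Du Bois singularities are simply too strong to coexist with a genuine singularity: each of $n=2,3$ and $n\ge4$ forces $d\le 2m$, hence $\codim X_{\sing}=d<2m+1$, contradicting the \Mustata--Popa bound of Theorem \ref{thm:MP-codim-bound} unless $X$ is smooth — in which case it is $K_j$-regular for all $j$. For the reverse implication, my first step is to apply Proposition \ref{prop:negative-regular-implies-DB} with $s=0$ to deduce that $K_{-d+2m+1}$-regularity already yields pre-$m$-Du Bois singularities. In the comfortable range $m\le d-2$ this suffices: Theorem \ref{thm:reflexive-Kahler-diff}(2) gives $\Omega^p_{X/F}\simeq\H^0\DB_{X/F}^p$ for every $p\le m$, and combining with the cohomology vanishing $\H^i\DB_{X/F}^p=0$ $(i>0)$ coming from pre-$m$-Du Bois upgrades this to $\Omega^p_{X/F}\simeq\DB_{X/F}^p$, i.e.\ $m$-Du Bois.

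The remaining range $m\ge d-1$ is where I expect the real difficulty: here Theorem \ref{thm:reflexive-Kahler-diff} no longer certifies reflexivity of $\Omega^m$, so the upgrade from pre-$m$-Du Bois to $m$-Du Bois is not automatic. My strategy is to locate an intermediate level where reflexivity still applies and then let the codimension bound do the rest. For $d\ge4$ the level $\floor{\frac{d+1}{2}}$ satisfies $\floor{\frac{d+1}{2}}\le\min(m,d-2)$, so the comfortable range already makes $X$ into a $\floor{\frac{d+1}{2}}$-Du Bois variety, whereupon Theorem \ref{thm:MP-codim-bound} gives $\codim X_{\sing}\ge 2\floor{\frac{d+1}{2}}+1\ge d+1>d$ and forces smoothness. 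The genuinely delicate cases are $d=2$ and $d=3$, where the intermediate level collapses to the top degree $p=d-1$ at which reflexivity fails. There I would exploit that $m\ge d-1$ makes $X$ respectively $K_1$- and $K_2$-regular, so the CHWW description (Theorem \ref{thm:CHWW-local}) forces $\Omega^{d-1}_{X/\QQ}\to\H^0\DB_{X/\QQ}^{d-1}$ to be surjective; transporting this surjectivity to the field of definition $F$ through Lemma \ref{lemma: DB-change-of-field}(1) (valid because $X$ is already $(d-2)$-Du Bois) and pairing it with torsion-freeness from Theorem \ref{thm:reflexive-Kahler-diff}(1) yields $\Omega^{d-1}_{X/F}\simeq\H^0\DB_{X/F}^{d-1}$, hence $(d-1)$-Du Bois, hence once more smooth via Theorem \ref{thm:MP-codim-bound}. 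The main obstacle throughout is precisely this large-$m$ regime: keeping straight exactly which degree of low $K$-regularity is on hand in each small-dimensional case, and managing the $\QQ$-versus-$F$ change-of-field bookkeeping, is where the argument needs the most care.
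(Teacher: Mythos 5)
Your proposal is correct and follows essentially the same route as the paper's proof: the same two implications, the same case analysis on $-d+2m+2$ for the forward direction (using Proposition \ref{prop:DB-implies-neg-regular-affine-case} and Theorem \ref{thm:MP-codim-bound}), and the same structure for the converse (Proposition \ref{prop:negative-regular-implies-DB} with $s=0$, the range $m\le d-2$ via Theorem \ref{thm:reflexive-Kahler-diff}, the intermediate level $\floor{\frac{d+1}{2}}$ for $d\ge4$, and the CHWW-surjectivity-plus-torsion-freeness argument for $d=2,3$). Your only deviations are cosmetic streamlinings — merging the cases $-d+2m+2\in\{2,3\}$ and $\ge 4$ into one codimension-bound contradiction, and treating $d=2,3$ uniformly at level $p=d-1$ — which are sound.
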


\begin{corollary}[$\implies$ Theorem \ref{intro-thm:char-of-K-regularity}]
    \label{cor:K-regularity-min-exponent-lci}
    Let $X$ be a complex local complete intersection with isolated singularities of dimension $d$. Let $r$ be the codimension of $X$ in its embedding, and $\tilde{\alpha}_X$ be the minimal exponent of $X$ (which depends on the embedding). Then
    \[X \text{ is $K_m$-regular} \iff \tilde{\alpha}_X 
    \ge \ceil{\frac{m+d}{2}}+r-1.\]
    \begin{proof}
        Combining Corollary \ref{cor:equiv-lci-isolated} with the main theorems of \cites{JKSY, MOPW, CDM}, which states that
        \[\text{$X$ has $m$-Du Bois singularities} \iff \tilde{\alpha}_X\ge m+r,\]
        we obtain the result.
    \end{proof}
\end{corollary}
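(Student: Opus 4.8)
The plan is to combine the dimension-parity equivalence in Corollary \ref{cor:equiv-lci-isolated}, which relates $K$-regularity of a local complete intersection with isolated singularities to the $m$-Du Bois condition, with the known characterization of $m$-Du Bois singularities in terms of the minimal exponent $\tilde{\alpha}_X$. The two halves of Corollary \ref{cor:equiv-lci-isolated} say precisely that, for a local complete intersection $X$ of dimension $d\ge 2$ with isolated singularities, $X$ is $K_m$-regular if and only if $X$ has $\ell$-Du Bois singularities for the largest $\ell$ with $-d+2\ell+2\le m$ (equivalently, the two $K$-regularity bounds $-d+2\ell+1$ and $-d+2\ell+2$ collapse because $K_n$-regularity is automatically inherited downward). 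So the strategy is purely a bookkeeping exercise: translate the inequality governing $K_m$-regularity into an inequality on $\ell$, feed that $\ell$ into the minimal-exponent criterion, and simplify.

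Concretely, I would first fix the arithmetic dictionary. Given $m$, the relevant integer is $\ell = \left\lceil \frac{m+d}{2}\right\rceil - 1$: this is the largest integer $\ell$ such that $2\ell \le m+d-2$, i.e. $-d+2\ell+2 \le m$, so that $K_m$-regularity is equivalent (via Corollary \ref{cor:equiv-lci-isolated}, reading off the appropriate one of the two bounds according to the parity of $m+d$) to $X$ having $\ell$-Du Bois singularities. I would then invoke the main theorems of \cites{JKSY,MOPW,CDM}, quoted in the proof, namely
\[
\text{$X$ has $\ell$-Du Bois singularities} \iff \tilde{\alpha}_X \ge \ell + r,
\]
where $r$ is the embedding codimension. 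Substituting $\ell = \lceil \frac{m+d}{2}\rceil - 1$ gives
\[
\tilde{\alpha}_X \ge \ceil{\tfrac{m+d}{2}} + r - 1,
\]
which is exactly the claimed threshold.

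The one subtlety worth pinning down, and the step I expect to require the most care, is making sure the ceiling function correctly absorbs both parities in Corollary \ref{cor:equiv-lci-isolated}. That corollary asserts the equivalence of $K_{-d+2m+1}$-regularity and $K_{-d+2m+2}$-regularity with the $m$-Du Bois condition simultaneously; the point is that whether the target exponent $m$ is congruent to $d$ or to $d+1$ modulo $2$, the value $\ell = \lceil \frac{m+d}{2}\rceil - 1$ picks out the correct Du Bois level because $K_{n}$-regularity always implies $K_{n-1}$-regularity (as recalled in the background section). I would verify the two parity cases explicitly: when $m+d$ is even one reads $m = -d+2\ell+2$, and when $m+d$ is odd one reads $m=-d+2\ell+1$, and in both cases Corollary \ref{cor:equiv-lci-isolated} supplies the equivalence with $\ell$-Du Bois. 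Once this is confirmed, no further geometric input is needed, and the minimal-exponent criterion closes the argument.

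I should also note that the hypothesis $d \ge 2$ is inherited directly from Corollary \ref{cor:equiv-lci-isolated}, and that the dependence of $\tilde{\alpha}_X$ on the chosen embedding is harmless since the criterion for $\ell$-Du Bois singularities quoted from \cites{JKSY,MOPW,CDM} carries the matching $r$-shift, so the two embedding-dependent quantities cancel in the final inequality. This keeps the proof short and entirely formal, which matches the brevity of the author's own argument.
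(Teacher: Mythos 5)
Your proof is correct and takes essentially the same route as the paper: Corollary \ref{cor:equiv-lci-isolated} combined with the minimal-exponent characterization of $\ell$-Du Bois singularities from \cites{JKSY, MOPW, CDM}, with the ceiling-function arithmetic made explicit. One slip in your setup, harmless only because your final parity check bypasses it: when $m+d$ is odd, $\ell=\ceil{\frac{m+d}{2}}-1$ is \emph{not} the largest integer satisfying $-d+2\ell+2\le m$ (that integer is $\floor{\frac{m+d}{2}}-1$, and applying Corollary \ref{cor:equiv-lci-isolated} to it would only characterize $K_{m-1}$-regularity). In the odd case your $\ell$ instead satisfies $-d+2\ell+1=m$, so $K_m$-regularity is read off from the \emph{first} of the two bounds in the corollary rather than from $-d+2\ell+2$ --- which is exactly what your explicit parity casework at the end invokes, so the argument as completed there is sound.
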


\begin{remark}\label{rmk:equiv-higher-dim-sing-locus}
    Similar arguments as in Corollary \ref{cor:equiv-lci-isolated} show that for an affine local complete intersection $X$ with $\dim X_{\sing}=1$, one has
    \[\begin{tikzcd}
        &K_{-d+1}\text{-regular}\ar[d, Leftarrow] & \\
        &K_{-d+2}\text{-regular} \ar[r,Leftrightarrow]\ar[d, Leftarrow] &\text{Du Bois}\\
        &K_{-d+3}\text{-regular}\ar[d, Leftarrow] & \\
        &K_{-d+4}\text{-regular}\ar[d, Leftarrow] \ar[r,Leftrightarrow] &1\text{-Du Bois} \\
        &\cdots
    \end{tikzcd}\]
    In particular, 
    \[\tilde{\alpha}_X\ge m+r \iff X \text{ is } K_{-d+2m+2}\text{-regular}.\]
    More generally, suppose $X$ is an affine local complete intersection $\dim X_{\sing}=s$. Then 
    \begin{align*}
        \text{$X$ is $K_{-d+2m+1+s}$-regular}\implies 
        &\text{$X$ has $m$-Du Bois singularities} \\
        \implies &\text{$X$ is $K_{-d+2m+2}$-regular}.
    \end{align*}
    Thus $K_m$-regularity can be read off, at least partially, from the minimal exponent $\tilde{\alpha}_X$ and the codimension $r$ of $X$ in its embedding:
    \begin{enumerate}
        \item $X$ is $K_m$-regular for $m\le -d+2\tilde{\alpha}_X-2r+2$;
        \item $X$ is not $K_m$-regular for $m=-d+2\ceil{\tilde{\alpha}_X}-2r+s$.
    \end{enumerate}
\end{remark}

\section{Application to regularity criteria}\label{sec:regularity}

In this section, we prove the following regularity criterion. Recall that $K_m$-regularity implies $K_{m-1}$-regularity for all $m$ \cites{Vorst,Dayton-Weibel}, hence it immediately implies Theorem \ref{intro-thm:regularity}. 

\begin{theorem}($\implies$ Theorem \ref{intro-thm:regularity})\label{thm:regularity}
Let $X$ be an affine local complete intersection, with singular locus $X_{\sing}$. Then
    \begin{enumerate}
        \item If $\codim X_{\sing}$ is odd, and $X$ is $K_2$-regular, then it is regular;
        \item If $\codim X_{\sing}$ is even, and $X$ is $K_1$-regular, then it is regular.
    \end{enumerate}
\end{theorem}

 The proof is similar to that of Corollary \ref{cor:equiv-lci-isolated}, and relies on three main ingredients:
\begin{enumerate}
    \item A regularity criterion due to \Mustata-Popa (Theorem \ref{thm:MP-codim-bound}). The relevance of this result in relation to extensions of Vorst's conjecture is observed in \cite{K-DB};
    \item Results due to Graf and Miller-Vassiliadou about the torsion and cotorsion of the K\"ahler differentials of local complete intersections (Theorem \ref{thm:reflexive-Kahler-diff});
    \item The relationship between $K_m$-regularity and pre-$k$-Du Bois singularities (Proposition \ref{prop:negative-regular-implies-DB}), which in turn relies on a vanishing theorem of \Mustata-Popa (Theorem \ref{thm:MP-vanishing}).
\end{enumerate}

\begin{proof}[Proof of Theorem \ref{intro-thm:regularity}]
    Let $F$ be the field of definition of $X$. By Theorem \ref{thm:MP-codim-bound}, regularity of $X$ is implied by having $m$-Du Bois singularities for $m> \frac{d-1}{2}$.

(1) Suppose $\codim X_{\sing}=2m+1$ is odd. To show $X$ is smooth, it suffices to show $X$ has $(m+1)$-Du Bois singularities, i.e. the following two conditions hold:
\begin{itemize}
    \item $X$ has pre-$(m+1)$-Du Bois singularities. By Proposition \ref{prop:negative-regular-implies-DB}, this is implied by $K_{-d+2(m+1)+1+s} = K_2$-regularity;
    \item The maps $\Omega_{X/F}^p\to \H^0\DB_{X/F}^p$ are isomorphisms for $p\le m+1$. By Theorem \ref{thm:reflexive-Kahler-diff}, this is the case when $p\le \codim X_{\sing} - 2$. Thus we have the desired isomorphism when $m+1\le \codim X_{\sing}-2 = (2m+1)-2$, i.e. when $m\ge 2$.
    
    When $m=1$, we need to show $\Omega_{X/F}^2\to \H^0\DB_{X/F}^2$ is an isomorphism. By Theorem \ref{thm:CHWW-main}, $K_2$-regularity implies that this map is surjective with $F$ replaced by $\QQ$, and Lemma \ref{lemma: DB-change-of-field}(2) implies the surjectivity over $F$; on the other hand Theorem \ref{thm:reflexive-Kahler-diff} implies that $\Omega_{X/F}^2$ is torsion-free, so the map is an isomorphism.

    Finally, it is impossible that $m=0$, as $K_2$-regularity implies $X$ has $1$-Du Bois singularities, so $\codim X_{\sing} \ge 3$.
\end{itemize}
Thus $X$ has $(m+1)$-Du Bois singularities, hence smooth.

(2) Suppose $\codim X_{\sing}=2m$ is even. To show $X$ is smooth, it suffices to show $X$ has $m$-Du Bois singularities, i.e. the following two conditions hold:
\begin{itemize}
    \item $X$ has pre-$m$-Du Bois singularities. By Proposition \ref{prop:negative-regular-implies-DB}, this is implied by $K_{-d+2m+1+s} = K_1$-regularity;
    \item The maps $\Omega_{X/F}^p\to \H^0\DB_{X/F}^p$ are isomorphisms for $p\le m$. By Theorem \ref{thm:reflexive-Kahler-diff}, this is the case when $p\le \codim X_{\sing} - 2$. Thus we have the desired isomorphism when $m\le \codim X_{\sing}-2 = 2m-2$, i.e. when $m\ge 2$.
    
    When $m=1$, we need to show $\Omega_{X/F}^1\to \H^0\DB_{X/F}^1$ is an isomorphism. By Theorem \ref{thm:CHWW-main} and Lemma \ref{lemma: DB-change-of-field}(1), $K_1$-regularity implies that this map is surjective; on the other hand the results \cites{Kahler-diff-hypersurface,Kahler-diff-lci} show that $\Omega_X^1$ is torsion-free, so the map is injective.
\end{itemize}
Thus $X$ has $m$-Du Bois singularities, hence smooth.
\end{proof}

\begin{remark}
    The proof above can be used to show the following fact: if $X$ is a local complete intersection that is pre-$m$-Du Bois for all $m$, then either $\codim X_{\sing}\le 3$ or $X$ is smooth.
\end{remark}

The result above can be improved for local complete intersections of minimal embedding codimension $r\ge 2$:

\begin{definition}\label{def:minimal-embedding-codim}
    The \textit{minimal embedding codimension} of a local complete intersection $X$ is the minimal integer $r$ such that for every $x\in X$, there exists a regular local ring $R$, and a regular sequence $(f_1, \cdots, f_r)$ in $R$, such that
    \[\O_{X,x}\simeq R/(f_1, \cdots, f_r).\]
\end{definition}

The following theorem, which I learned from B. Dirks, shows that the codimension bound in Theorem \ref{thm:MP-codim-bound} can be improved when $X$ is a local complete intersection of minimal embedding codimension $r$. The proof follows as in \cite{CDM}*{Corollary 1.3}. See \cite{K-cones}*{Theorem 3.5} for a detailed argument. 

\begin{theorem}[Bradley Dirks]\label{thm:codim-bound}
    Let $X$ be a local complete intersection of minimal embedding codimension $r$. Suppose $X$ has $m$-Du Bois singularities. Then
    \[\codim X_{\sing}\ge 2m+r.\]
\end{theorem}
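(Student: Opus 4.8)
The plan is to reduce the statement to a single inequality for the minimal exponent and then import the restriction and symmetry properties of that invariant. Write $d=\dim X$. The starting point is the characterization of higher Du Bois singularities for local complete intersections recorded in Example \ref{ex: non-lci-K-regular}(1): once we fix a local embedding of $X$ as a codimension $r$ complete intersection in a smooth variety, $X$ has $m$-Du Bois singularities if and only if $\tilde\alpha_X\ge m+r$, where $\tilde\alpha_X$ is the minimal exponent computed for \emph{that} embedding. Since we take $r$ to be the minimal embedding codimension, it suffices to prove the purely Hodge-theoretic inequality
\[\codim X_{\sing}\ge 2\tilde\alpha_X-r,\]
for then $\codim X_{\sing}\ge 2(m+r)-r=2m+r$. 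Note that for $r=1$ this reduces to Theorem \ref{thm:MP-codim-bound}, so the real content is the gain for $r\ge 2$, which is precisely what minimality of the embedding codimension will supply.

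The displayed inequality is local on $X$, so I would work near a point $x\in X_{\sing}$ at which $\dim_x X_{\sing}$ attains the value $s:=\dim X_{\sing}$ and at which $\O_{X,x}\simeq R/(f_1,\dots,f_r)$ with $R$ regular local and $(f_1,\dots,f_r)$ a minimal regular sequence. The first step is to reduce to an isolated singularity by slicing: cutting the ambient smooth variety with $s$ general hyperplanes through $x$ produces a complete intersection $X'$ of dimension $d-s=\codim X_{\sing}$ with an isolated singularity at $x$, still of minimal embedding codimension $r$ (a general hyperplane drops both the dimension and the embedding dimension by one, so the codimension, and its minimality, are preserved). The essential input at this step is the restriction theorem for the minimal exponent of a complete intersection under transverse general sections, giving $\tilde\alpha_{X'}=\tilde\alpha_X$; this is the complete-intersection analogue of the hypersurface restriction theorem, established via the $V$-filtration and used in \cite{CDMO} and \cite{CDM}.

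For the isolated case I would invoke the symmetry of the Hodge spectrum of an isolated complete intersection singularity. In the minimal embedding the ambient smooth space (after slicing) has dimension $(d-s)+r$, and the spectrum is symmetric about $\tfrac{(d-s)+r}{2}$. Since $\tilde\alpha_{X'}$ is the smallest spectral number and the largest is at least as large, symmetry forces
\[\tilde\alpha_{X'}\le \frac{(d-s)+r}{2}.\]
Combined with $\tilde\alpha_X=\tilde\alpha_{X'}$ this gives $\codim X_{\sing}=d-s\ge 2\tilde\alpha_X-r$, as required. The role of minimality of $r$ is exactly to pin the symmetry center at $\tfrac{(d-s)+r}{2}$: passing to a non-minimal embedding shifts $\tilde\alpha_X$ by the change in codimension and the local cohomology module acquires extra slack, so the clean symmetry, and hence the sharp bound, is available only for the minimal embedding.

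I expect the main obstacle to be the careful handling of the two analytic inputs for genuine (non-hypersurface) complete intersections: the preservation of the minimal exponent under general hyperplane sections, and the spectral symmetry for isolated complete-intersection singularities with center governed by the minimal embedding. Both are controlled by the $V$-filtration on the local cohomology $\H^r_X\O_Y$ (equivalently, the Bernstein–Sato theory of the tuple $(f_1,\dots,f_r)$), and once they are granted the remaining arithmetic is exactly the computation above. A secondary point requiring attention is that the slicing preserves the minimality of the embedding codimension, not merely the codimension itself, which follows from the generality of the hyperplanes. The detailed execution of this argument is carried out in \cite{K-cones}*{Theorem 3.5}, following \cite{CDM}*{Corollary 1.3}.
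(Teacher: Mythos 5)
Your plan has the right skeleton, and in outline it is the same reduction the paper gestures at (the paper gives no details, deferring to \cite{CDM}*{Corollary 1.3} and a separate write-up): recast the claim as the upper bound $\tilde\alpha_X\le\frac{(d-s)+r}{2}$ for a minimal presentation, reduce to isolated singularities by slicing, and conclude by a symmetry statement there. However, two of your steps contain genuine gaps, and the first one breaks the argument as you have arranged it.

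The slicing step goes in the wrong direction. You cut with general hyperplanes \emph{through the fixed point} $x$ at which the embedding codimension equals $r$, and assert $\tilde\alpha_{X'}=\tilde\alpha_X$. But restriction to a general hyperplane through a prescribed singular point can only decrease the minimal exponent, and often strictly does: for the quadric cone $x_1^2+\cdots+x_n^2$ one has $\tilde\alpha=n/2$, while any hyperplane section through the vertex has $\tilde\alpha=(n-1)/2$. The restriction theorems of \cite{MP-lci} and \cite{CDMO} that preserve $\tilde\alpha$ concern general members of basepoint-free systems, i.e.\ hyperplanes \emph{not} forced through $x$; such hyperplanes meet $X_{\sing}$ only at general points of its top-dimensional components, and at those points the local embedding codimension may be strictly smaller than $r$. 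This is not a technicality, because the point where $r$ is attained need not be a general point of the top-dimensional stratum. Concretely, take $X=V(x_1x_2,\,x_3x_4)\subset\AA^4$ (a product of two nodal curves): it is a Du Bois local complete intersection whose minimal embedding codimension in the sense of Definition \ref{def:minimal-embedding-codim} is $2$ (attained only at the origin), while $\codim X_{\sing}=1$. A general hyperplane section of $X$ through the origin is the cone over four general points of $\PP^2$, which is not even seminormal, hence not Du Bois; so the claimed equality of minimal exponents fails badly, and your argument would ``prove'' that $X$ is not Du Bois. Products of $1$-Du Bois factors (e.g.\ the $3$-fold quadric cone times the cone over a smooth intersection of two quadrics in $\PP^5$) exhibit the same phenomenon for $m\ge 1$. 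What the slicing method actually yields is the componentwise statement — each component $W$ of $X_{\sing}$ satisfies $\codim_X W\ge 2m+r_W$, where $r_W$ is the embedding codimension at the \emph{generic} point of $W$ — and any correct execution (this is how the argument of \cite{CDM} is organized) must work at generic points of the singular locus rather than at a special point of maximal embedding codimension.

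Second, the input you call an importable ``analytic'' fact at the isolated step is not available for $r\ge 2$. For isolated hypersurface singularities the bound $\tilde\alpha\le n/2$ follows from Saito's duality for the microlocal $b$-function (equivalently, symmetry of the spectrum), but for genuine isolated complete intersection singularities neither the symmetry of a spectrum about $\mathrm{edim}/2$ nor the identification of the minimal exponent of \cite{CDMO} with a smallest spectral number is an established result; this is precisely why \cite{CDMO} and \cite{CDM} run all such bounds through the auxiliary hypersurface $g=\sum_i y_if_i$ on $Y\times\AA^r$ and its $V$-filtration, never through an ICIS spectrum. So the step you treat as a known input is in fact the main content and would itself require proof.
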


As a consequence, we have

\begin{corollary}\label{cor:regularity-min-dim-r}
    Let $X$ be an affine local complete intersection of minimal embedding codimension $r$. Then
    \begin{enumerate}
        \item If $\codim X_{\sing} - r\ge 6-2r$ is even, and $X$ is $K_{3-r}$-regular, then $X$ is smooth.
        \item If $\codim X_{\sing} - r\ge 5-2r$ is odd, and $X$ is $K_{2-r}$-regular, then $X$ is smooth.
    \end{enumerate}
\begin{proof}
    By Theorem \ref{thm:codim-bound}, a variety $X$ with $m$-Du Bois singularities satisfies $\codim X_{\sing} \ge 2m+r$. Thus,

    (1) When $\codim X_{\sing} = 2m+r$, to show smoothness we need to show $X$ is $(m+1)$-Du Bois, which is the case if
    \begin{itemize}
        \item $X$ is pre-$(m+1)$-Du Bois. This is implied by $K_{-d+2(m+1)+1+s}=K_{3-r}$-regularity.
        \item $\Omega_X^p$ is reflexive for $p\le m+1$. This is the case if $m+1\le \codim X_{\sing} - 2 = 2m+r-2$, i.e. when $m\ge 3-r$.
    \end{itemize}
    The proof of (2) is similar.
\end{proof}
\end{corollary}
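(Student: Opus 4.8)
The plan is to argue by contradiction using the sharpened codimension bound of Theorem~\ref{thm:codim-bound}: if $X$ has $k$-Du Bois singularities, then $\codim X_{\sing}\ge 2k+r$. Assuming $X$ is not smooth, I would show that the $K$-regularity hypothesis forces $X$ to be $(m+1)$-Du Bois for an appropriate $m$, yielding the strict inequality $\codim X_{\sing}\ge 2(m+1)+r > \codim X_{\sing}$, a contradiction. This mirrors the strategy of Theorem~\ref{intro-thm:regularity} and Corollary~\ref{cor:equiv-lci-isolated}, with Theorem~\ref{thm:codim-bound} replacing Theorem~\ref{thm:MP-codim-bound}.

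First I would fix the parametrization forced by the parity hypotheses. In case (1) write $\codim X_{\sing}=2m+r$, so that the bound $\codim X_{\sing}-r\ge 6-2r$ becomes $m\ge 3-r$; in case (2) write $\codim X_{\sing}=2m+1+r$, so that $\codim X_{\sing}-r\ge 5-2r$ becomes $m\ge 2-r$. Setting $s=\dim X_{\sing}=d-\codim X_{\sing}$, the $K$-regularity index needed to invoke Proposition~\ref{prop:negative-regular-implies-DB} for pre-$(m+1)$-Du Bois, namely $-d+2(m+1)+1+s$, collapses to $3-r$ in case (1) and to $2-r$ in case (2), matching the hypotheses exactly.

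With pre-$(m+1)$-Du Bois singularities secured, I would upgrade to genuine $(m+1)$-Du Bois singularities. Pre-$(m+1)$-Du Bois gives $\DB_{X/F}^p\simeq \H^0\DB_{X/F}^p$ for $p\le m+1$, and Theorem~\ref{thm:reflexive-Kahler-diff}(2) supplies the identification $\Omega_{X/F}^p\simeq \H^0\DB_{X/F}^p$, hence $\Omega_{X/F}^p\simeq \DB_{X/F}^p$, whenever $p\le \codim X_{\sing}-2$. The binding constraint $p=m+1\le \codim X_{\sing}-2$ unwinds to $m\ge 3-r$ in case (1) and $m\ge 2-r$ in case (2)---precisely the numerical lower bounds in the statement. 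Thus $X$ is $(m+1)$-Du Bois, contradicting Theorem~\ref{thm:codim-bound}, and $X$ must be smooth.

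The main obstacle is purely arithmetic: the argument succeeds only because the two numerical requirements---the regularity index collapsing to $3-r$ or $2-r$, and the reflexivity range $m+1\le \codim X_{\sing}-2$---are satisfied simultaneously, which is exactly what the parity-and-size conditions on $\codim X_{\sing}-r$ are engineered to guarantee. In contrast to the proof of Theorem~\ref{intro-thm:regularity}, the stated lower bounds keep $p=m+1$ strictly inside the reflexivity range of Theorem~\ref{thm:reflexive-Kahler-diff}(2), so no separate boundary analysis (combining surjectivity from Theorem~\ref{thm:CHWW-main} with torsion-freeness) is required, and the proof of (2) runs verbatim parallel to that of (1).
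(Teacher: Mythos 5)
Your proposal is correct and follows essentially the same route as the paper: parametrize $\codim X_{\sing}$ as $2m+r$ (resp.\ $2m+1+r$), use Proposition~\ref{prop:negative-regular-implies-DB} to get pre-$(m+1)$-Du Bois from the collapsed regularity index $3-r$ (resp.\ $2-r$), upgrade to $(m+1)$-Du Bois via the reflexivity range of Theorem~\ref{thm:reflexive-Kahler-diff}, and conclude smoothness from the codimension bound of Theorem~\ref{thm:codim-bound}. The only cosmetic difference is that you make the contradiction structure explicit and spell out case (2), both of which the paper leaves implicit.
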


\begin{remark}
    It is desirable to know about the implications of $K_2$-regularity beyond affine local complete intersections. For local complete intersections with isolated singularities (that are not necessarily affine), Theorem \ref{intro-thm:regularity} and Corollary \ref{cor:regularity-min-dim-r} continue to hold, thanks to the degeneration of the local-to-global spectral sequence
    \[E_2^{pq} = H^p(X, \N\K_q(X)) \implies NK_{q-p}(X).\]
    When the singular locus has higher dimension, it is not known whether $K_2$-regular implies smooth; see Question \ref{ques:K2-reg-non-affine}.
\end{remark}

\begin{remark}\label{rmk:K2-for-non-lci}
    By Theorem \ref{thm:CHWW-main}, if an affine variety $X$ is $K_2$-regular, then the natural map
    \[\Omega_X^1\to \H^0\DB_X^1\]
    is an isomorphism. In the study of higher Du Bois singularities beyond the local complete intersection case, one difficulty is that there is no known variety satisfying this condition; see Remark \ref{rmk:no-non-lci-example}. This suggests also that $K_2$-regular varieties that are not local complete intersections (if there are any) are tricky to find.
\end{remark}

\begin{remark}\label{rmk:K1-regular-surface}
    By Theorem \ref{intro-thm:regularity}, a normal $K_1$-regular surface is smooth if it is a local complete intersection. It is natural to ask if there is any non-lci surface that is $K_1$-regular but not smooth. By results in \cite{Srinivas} and \cite{K-cones}, one can show that no affine cone over a smooth curve can be $K_1$-regular. I thank V. Srinivas for telling me about these references.

    Note that the normality assumption is necessary: the surface $(xyz=0)\subset \AA^3$ is $K_1$-regular. In fact, any affine simple normal crossings divisor is $K_1$-regular.
\end{remark}

\section{Bass' question and the Du Bois invariants}\label{sec:DB-table}
Let $X$ be a variety over $F$. The Du Bois invariants, defined as
\[b^{p,q} = \dim_F \H^q \DB_{X/F}^p \]
for $q>0$, are introduced by Steenbrink \cite{Steenbrink-isolated} when $X$ has isolated singularities. It follows from Proposition \ref{prop:DB-complex-support} that $b^{p,q}=0$ for all $q\ge d:=\dim X$. We arrange the invariants $b^{p,q}$ for $0\le p \le d$ and $1\le q\le d-1$ into a $(d-1)\times (d+1)$ table, and refer to it as the \textit{Du Bois table}.

For surfaces over $\QQ$, we have a $1\times 3$ table of the form:
\[
    \begin{tabular}{|c c c}
        $b^{0, 1}$ & $b^{1,1}$ & $b^{2,1}$ \\
        \hline
    \end{tabular}
\]

In fact, it is well known that for surfaces, the only interesting invariants are $b^{0,1}$ and $b^{1,1}$:
\begin{lemma}[\cite{K-Bass-negative}*{Proposition 3.2}]
    Let $X$ be a surface over $F$. For $q>0$ and $p\ge 0$, we have 
    \[\H^q\DB_{X/F}^p = 0\]
    unless $(p,q)=(0,1)$ or $(1,1)$.
    \begin{proof}
        By Steenbrink's vanishing theorem (see Theorem \ref{thm:Steenbrink-vanishing}), we have $\H^q\DB_{X/F}^p = 0$ when $p+q>2$. The vanishing when $(p,q)=(0,2)$ follows from Proposition \ref{prop:DB-complex-support}(1).
    \end{proof}
\end{lemma}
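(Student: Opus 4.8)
The plan is to bound the region of the $(p,q)$-plane in which the cohomology sheaves $\H^q\DB^p_{X/F}$ can be nonzero, using the two vanishing results already at our disposal, and then to check that after imposing both constraints only the pairs $(0,1)$ and $(1,1)$ survive. Since $\dim X = 2$, Steenbrink's vanishing theorem (Theorem \ref{thm:Steenbrink-vanishing}) immediately gives $\H^q\DB^p_{X/F} = 0$ whenever $p+q>2$. Among the pairs with $q>0$, this leaves only those with $p+q\le 2$, namely $(0,1)$, $(1,1)$ and $(0,2)$; every pair with $p\ge 2$, as well as $(0,q)$ for $q\ge 3$ and $(1,q)$ for $q\ge 2$, is already eliminated.

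It then remains to rule out the single pair $(0,2)$, i.e. to show $\H^2\DB^0_{X/F}=0$. For this I would invoke the support-dimension bound of Proposition \ref{prop:DB-complex-support}(1): for $q>0$ one has $\dim\supp\H^q\DB^p_{X/F}\le \dim X - q - 1$. Taking $p=0$, $q=2$ and $\dim X = 2$ gives $\dim\supp\H^2\DB^0_{X/F}\le -1$, so the support is empty and the sheaf vanishes. Equivalently, one may cite directly the ``in particular'' clause of that proposition, which states $\H^i\DB^p_{X/F}=0$ for all $i\ge d=\dim X$, applied with $i=2$. Combining the two vanishing statements leaves exactly $(0,1)$ and $(1,1)$ as the only pairs with $q>0$ at which the sheaf may be nonzero.

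There is essentially no obstacle here beyond choosing the correct two inputs, as the entire content is packaged into Theorem \ref{thm:Steenbrink-vanishing} and Proposition \ref{prop:DB-complex-support}. The only subtlety worth flagging is that Steenbrink's theorem alone does not suffice: the pair $(0,2)$ lies exactly on the boundary $p+q=\dim X$, where that theorem is silent, and it is precisely the complementary support bound that closes this last case. For sharpness I would also remark that the two surviving pairs $(0,1)$ and $(1,1)$ genuinely cannot be excluded---both can be nonzero, for instance on an affine cone over a smooth projective curve of positive genus---so the exceptional set appearing in the statement is optimal.
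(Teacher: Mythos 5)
Your proof is correct and takes exactly the same route as the paper's: Steenbrink's vanishing theorem eliminates all pairs with $p+q>2$, and Proposition \ref{prop:DB-complex-support}(1) disposes of the one remaining case $(p,q)=(0,2)$. The added remark on sharpness (both $b^{0,1}$ and $b^{1,1}$ can be nonzero, e.g.\ for cones over positive-genus curves) is accurate but not needed for the statement.
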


Applications of the Du Bois invariants to $K$-theory started in \cite{K-Bass-negative}, where the authors used an example of surface with $b^{0,1}=1$ and $b^{1,1}=0$ to give negative answers to Bass' question \cite{Bass}, which asks whether
\[K_m(X)\simeq K_m(X\times \AA^1) \implies K_m(X)\simeq K_m(X\times \AA^2).\]

Indeed, for surfaces, the main theorem of \cite{K-Bass} implies that Bass' question for the negative $K$-groups has an affirmative answer if and only if $b^{0,1}$ and $b^{1,1}$ are both zero, or both nonzero. In view of the potential applications of these invariants to $K$-theory, it is desirable to have a better understanding of them. In this section, we give a complete classification of these invariants for surfaces.

First, along the lines of \cite{PSV}*{Proposition C}, we show that $b^{0,1}$ and $b^{1,1}$ are constrained by the inequality $b^{0,1}\ge b^{1,1}$:
\begin{lemma}
    \label{lemma:DB-table-constraint}
    Let $X$ be a surface over $F$. Then the map
    \[d: \H^1\DB_{X/F}^0\to \H^1\DB_{X/F}^1\]
    is surjective. In particular, we have $b^{0,1}\ge b^{1,1}$ when $X$ has isolated singularities.
\begin{proof}
    By Lemma \ref{lemma: DB-change-of-field}(2), it suffices to prove the result when $F=\CC$. In this case, we have by the Poincar\'e Lemma that
    \[\CC\xto{\sim} (\DB_{X/\CC}^{\bullet})^{\an}\]
    as sheaves in the analytic topology on $X(\CC)$.

    By construction of the Du Bois complex (in the analytic category, see \cite{DB}), we have a spectral sequence
    \[E_1^{pq} = \H^q(\DB_{X/\CC}^p)^{\an} \mapsto \H^{p+q}(\DB_{X/\CC}^{\bullet})^{\an}\]
    converging to $(\DB_{X/\CC}^{\bullet})^{\an}\simeq \CC[0]$ supported in cohomological degree zero. Analyzing the maps in the spectral sequence shows that there is a surjection
    \[d^{\an}: \H^1(\DB_{X/\CC}^0)^{\an}\to \H^1(\DB_{X/\CC}^1)^{\an}.\]
    Using Serre's GAGA theorem, we then conclude that the map $d$ is surjective.
\end{proof}
\end{lemma}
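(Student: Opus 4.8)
The plan is to reduce to the case $F=\CC$ via the base-change isomorphism of Lemma \ref{lemma: DB-change-of-field}(2), and then to extract the surjectivity directly from the Hodge filtration spectral sequence attached to the analytic Du Bois complex. The key input is the analytic Poincar\'e lemma: the filtered de Rham complex of Du Bois \cite{DB} satisfies $(\DB_{X/\CC}^{\bullet})^{\an}\simeq \CC_X[0]$ in the analytic topology. The Hodge filtration $F$, whose graded pieces are $\gr_F^p(\DB_{X/\CC}^{\bullet})^{\an}=(\DB_{X/\CC}^p)^{\an}[-p]$, then yields a spectral sequence of analytic sheaves
\[E_1^{p,q}=\H^q(\DB_{X/\CC}^p)^{\an}\implies \H^{p+q}\bigl((\DB_{X/\CC}^{\bullet})^{\an}\bigr),\]
whose abutment is $\CC_X$ in total degree $0$ and vanishes in every other total degree. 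After identifying analytification with the algebraic cohomology sheaves, the map in question $d\colon \H^1\DB_{X/\CC}^0\to \H^1\DB_{X/\CC}^1$ is precisely the first differential $d_1\colon E_1^{0,1}\to E_1^{1,1}$.

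Next I would isolate the entry at position $(1,1)$ and show its cokernel is forced to vanish. Since $\dim X=2$, one has $\DB_{X/\CC}^p=0$ for $p<0$ and for $p>2$, so every $E_1$-term in columns $p\le -1$ or $p\ge 3$ is zero; moreover $E_1^{2,1}=\H^1\DB_{X/\CC}^2=0$ by Steenbrink vanishing (Theorem \ref{thm:Steenbrink-vanishing}), as $2+1>2$. Consequently $E_2^{1,1}=\coker\bigl(d_1\colon E_1^{0,1}\to E_1^{1,1}\bigr)=\coker(d^{\an})$. I would then verify that no higher differential meets the spot $(1,1)$: for $r\ge 2$ the outgoing map lands in column $1+r\ge 3$ and the incoming map originates in column $1-r\le -1$, both of which are zero. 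Hence $E_\infty^{1,1}=E_2^{1,1}=\coker(d^{\an})$. As $E_\infty^{1,1}$ is a graded piece of the total-degree-$2$ abutment, which vanishes, we obtain $\coker(d^{\an})=0$, i.e.\ $d^{\an}$ is surjective as a map of analytic coherent sheaves.

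Finally I would descend to the algebraic category: analytification is exact and faithful on coherent sheaves, so Serre's GAGA gives $(\coker d)^{\an}=\coker(d^{\an})=0$ and hence $\coker d=0$, proving $d$ is surjective. For the numerical consequence, when $X$ has isolated singularities the sheaves $\H^1\DB_{X/F}^0$ and $\H^1\DB_{X/F}^1$ have finite length, so surjectivity of $d$ forces $b^{0,1}=\dim_F\H^1\DB_{X/F}^0\ge \dim_F\H^1\DB_{X/F}^1=b^{1,1}$. The main obstacle is the spectral-sequence bookkeeping: one must fix the index conventions so that $d$ is genuinely the $d_1$ differential out of $E_1^{0,1}$, and then confirm that the combination of dimension vanishing (no $\DB^p$ outside $0\le p\le 2$) and Steenbrink vanishing ($\H^1\DB^2=0$) isolates $E_\infty^{1,1}=\coker(d)$ with no interference from neighbouring or higher differentials.
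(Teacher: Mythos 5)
Your proposal is correct and follows essentially the same route as the paper's proof: reduce to $F=\CC$ by base change, invoke the analytic Poincar\'e lemma, run the Hodge filtration spectral sequence for $(\DB_{X/\CC}^{\bullet})^{\an}$, and descend via GAGA. The only difference is that you make explicit the bookkeeping the paper compresses into ``analyzing the maps in the spectral sequence'' --- namely that $E_1^{2,1}=0$ by Steenbrink vanishing, that columns outside $0\le p\le 2$ vanish so no higher differential touches $(1,1)$, and that $E_\infty^{1,1}=0$ because the abutment is concentrated in degree $0$ --- which is exactly the intended argument.
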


In fact, the invariants $b^{0,1}$ and $b^{1,1}$ can take on arbitrary values:
\begin{proposition}
    \label{prop:DB-invariants}
    Let $X$ be a normal surface over $\QQ$.
    \begin{enumerate}
        \item If $X$ is an affine cone over a projective curve, or a quasi-homogeneous hypersurface, then $b^{0,1}=b^{1,1}$; in these cases, Bass' question for $K_m$ has positive answer when $m\le 0$. 
        \item Let $m\ge n\ge 0$ be any pair of nonnegative integers. The hypersurface $(x^3+y^3+z^{3m}+xyz^{m+n}=0)$ has Du Bois invariants
        \[b^{0,1}= m \quad \text{ and }\quad b^{1,1}=n.\]
    \end{enumerate}
    \begin{proof}
        (1) When $X$ is an affine cone over a projective curve $C$ associated to a line bundle $L$ on $C$, explicit calculation as in \cite{K-cones} and \cite{PSh} shows that
        \[\H^1\DB_{X/\QQ}^0=\H^1\DB_{X/\QQ}^1\simeq \bigoplus_{m\ge 0} \HH^1(C,\DB_{C/\QQ}^0\otimes L^m).\]
        For quasi-homogeneous hypersurfaces, we use van Straten-Steenbrink's computation of the geometric genus $p_g\le b^{0,1}$ and the \textit{alpha invariant} $\alpha:=b^{0,1}-b^{1,1}$: by \cite{Steenbrink-V-filtration}*{Theorem 3.1, Corollary 6.3}, for a surface $X=(f=0)\subset \AA^3=Y$ with isolated singularities at $0$, we have
        \[p_g = \dim Q^f/V_{>0}\]
        and 
        \[p_g+\alpha = \dim Q^f/\overline{K}\cap V_{>0},\]
        where $Q^f = \Omega_{Y,0}^3/df\wedge \Omega_{Y,0}^2$ and $\overline{K}$ is the kernel of the multiplication by $f$ on $Q^f$. If $f$ is quasi-homogeneous, multiplication by $f$ is zero on $Q^f$, so $\overline{K}=Q^f$, it follows that $\alpha = b^{0,1}-b^{1,1}=0$, as claimed. 
        
        (2) For semi-quasi-homogeneous hypersurfaces of weight $(w_x, w_y, w_z)$ with isolated singularities, the $V$-filtration on $Q^f$ is computed in \cite{Saito-V-filtration}. One has that $V_{>l} Q^f$ is generated by monomials $x^a y^b z^c$ satisfying
        \[w_x (a+1)+w_y(b+1)+w_z(c+1) - 1 > l.\]
        In particular, for $f=x^3+y^3+z^{3m}+xyz^{m+n}$, we have
        \[V_{>0} = (x, y, z^m)\]
        and a Gr\"obner basis computation shows that
        \[\overline{K} = (x, y, z^{2m-n})\]
        This gives that
        \[p_g = \dim Q^f/V_{>0} = m \ge b^{0,1}\]
        and 
        \[p_g+\alpha = \dim Q^f/\overline{K}\cap V_{>0} = 2m-n.\]
        Next, using the deformation argument in \cite{K-Bass-negative} (see Theorem 2.14 and the proof of Proposition 4.3 in \textit{loc. cit.}), we see that $b^{0,1}$ is independent of $n$. When $n=0$, we have $\alpha=m$, and since $0\le b^{1,1}=b^{0,1}-\alpha=b^{0,1}-m\le 0$, we have $b^{0,1}=m$ and $b^{1,1}=0$. It follows that for $n\ge 1$, we have
        \[b^{0,1}= m\quad \text{ and } \quad b^{1,1} = n,\]
        as required.
    \end{proof}
\end{proposition}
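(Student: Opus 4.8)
The plan is to prove the two parts by entirely separate mechanisms: part (1) by two direct computations of the cohomology sheaves of the Du Bois complex, and part (2) by a single explicit family analyzed through the $V$-filtration machinery of Steenbrink and Saito. For the cone case of part (1), I would take $X$ to be the affine cone over a smooth projective curve $C$ polarized by a line bundle $L$ and invoke the weight-space decomposition of the Du Bois complex of a cone as developed in \cite{K-cones} and \cite{PSh}: the cohomology sheaves at the vertex split into graded pieces, and on a surface cone one reads off that both $\H^1\DB_{X/\QQ}^0$ and $\H^1\DB_{X/\QQ}^1$ are isomorphic to $\bigoplus_{k\ge0}\HH^1(C,\DB_{C/\QQ}^0\otimes L^k)$. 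Equality of these two sheaves is exactly $b^{0,1}=b^{1,1}$.

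For the quasi-homogeneous case I would pass to the description of the alpha invariant $\alpha:=b^{0,1}-b^{1,1}$ via the $V$-filtration on $Q^f=\Omega_{Y,0}^3/df\wedge\Omega_{Y,0}^2$ from \cite{Steenbrink-V-filtration}, namely $p_g=\dim Q^f/V_{>0}$ and $p_g+\alpha=\dim Q^f/(\overline{K}\cap V_{>0})$, where $\overline{K}=\ker(\cdot f\colon Q^f\to Q^f)$. The key observation is that Euler's identity for a weight-one quasi-homogeneous $f$ gives $f=\sum_i w_i x_i\,\partial_i f\in\Jac(f)$, so under the identification $Q^f\cong\mathcal O_{Y,0}/\Jac(f)$ multiplication by $f$ is the zero map; hence $\overline{K}=Q^f$, $\overline{K}\cap V_{>0}=V_{>0}$, and $\alpha=0$. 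The Bass consequence is then formal: for a normal surface over $\QQ$ with isolated singularities, the criterion recorded in the introduction (from \cite{K-Bass}) says that Bass' question for $K_m$ with $m\le0$ has an affirmative answer precisely when $b^{0,1}$ and $b^{1,1}$ are simultaneously zero or simultaneously nonzero, and $b^{0,1}=b^{1,1}$ makes this automatic.

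For part (2) I would take the explicit family $f_a=x^3+y^3+z^{3m}+xyz^{m+a}$, which is semi-quasi-homogeneous for the weights $(w_x,w_y,w_z)=(\tfrac13,\tfrac13,\tfrac1{3m})$ with Brieskorn--Pham leading part $x^3+y^3+z^{3m}$, has an isolated singularity, and is $\mu$-constant in $a$. First I would apply Saito's formula \cite{Saito-V-filtraiton}: $V_{>0}Q^f$ is generated by the monomials $x^iy^jz^k$ with $w_x(i+1)+w_y(j+1)+w_z(k+1)-1>0$, which solves to $V_{>0}=(x,y,z^m)$ and hence $p_g=\dim Q^f/V_{>0}=m$; combined with the inequalities relating $p_g$ and $b^{0,1}$ this pins $b^{0,1}=m$. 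Next I would compute $\overline{K}=\ker(\cdot f)$ by a Gröbner-basis calculation in $Q^f$, obtaining $\overline{K}=(x,y,z^{2m-a})$; since $a\le m$ forces $2m-a\ge m$, one gets $\overline{K}\cap V_{>0}=(x,y,z^{2m-a})$ and $\dim Q^f/(\overline{K}\cap V_{>0})=2m-a$, so $\alpha=m-a$ and therefore $b^{1,1}=b^{0,1}-\alpha=a$. Taking $a=n$ realizes the prescribed pair $(b^{0,1},b^{1,1})=(m,n)$, with the extreme value controlled via the deformation-invariance input of \cite{K-Bass-negative}.

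The main obstacle I anticipate is concentrated in part (2) and is twofold. First, the Gröbner-basis identification $\overline{K}=(x,y,z^{2m-a})$ is the one genuinely computational step and must be carried out in the non-quasi-homogeneous members $a\ge1$, where $f\notin\Jac(f)$ so that $\cdot f$ is a nonzero nilpotent endomorphism of $Q^f$. Second, and more delicate, is pinning down the \emph{absolute} value of $b^{0,1}$ rather than merely the difference $\alpha$: the bounds relating $p_g$ and $b^{0,1}$ should give $b^{0,1}=m$, but one must invoke the deformation argument of \cite{K-Bass-negative} to guarantee that $b^{0,1}$ does not jump with $a$, and then reconcile this with the boundary member $a=0$, where $f_0$ is quasi-homogeneous and part (1) already forces $\alpha=0$. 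Keeping the bookkeeping of $V_{>0}$, the annihilator $\overline{K}$, and the constancy of $b^{0,1}$ mutually consistent across the family is precisely where the care is required; everything else reduces to the black-box formulas of Steenbrink and Saito.
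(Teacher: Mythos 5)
Your part (1) and the skeleton of your part (2) are essentially the paper's own proof. For the cone case you invoke the same decomposition of $\H^1\DB^0_{X/\QQ}$ and $\H^1\DB^1_{X/\QQ}$ from \cite{K-cones} and \cite{PSh}; for the quasi-homogeneous case you make the same argument (the paper asserts that multiplication by $f$ kills $Q^f$, and your Euler-identity justification $f\in\Jac(f)$ is exactly the reason); and the reduction of the Bass statement to the ``both zero or both nonzero'' criterion of \cite{K-Bass} is the intended one. In part (2) you use the same three tools as the paper: Saito's description of $V_{>0}Q^f$ giving $V_{>0}=(x,y,z^m)$ and $p_g=m$, the Gr\"obner identification $\overline{K}=(x,y,z^{2m-a})$ giving $p_g+\alpha=2m-a$ (your observation that $a\le m$ forces $\overline{K}\subseteq V_{>0}$ is a correct detail the paper leaves implicit), and the deformation argument of \cite{K-Bass-negative}.

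The genuine gap is the normalization $b^{0,1}=m$, which your proposal asserts but never proves. The inequality relating the two invariants goes the wrong way for your purposes: as the paper itself uses in part (2), one has $p_g\ge b^{0,1}$, and this can be strict --- any Du Bois but non-rational surface singularity (a simple elliptic or cusp singularity, say) has $p_g\ge 1$ but $b^{0,1}=0$ --- so Saito's computation $p_g=m$ yields only the upper bound $b^{0,1}\le m$. The deformation argument gives constancy of $b^{0,1}$ in $a$, but constancy is vacuous without an anchor member of the family where $b^{0,1}$ is actually known, and your proposal supplies none. The paper manufactures its anchor at $a=0$: there it combines $b^{0,1}\le p_g=m$ with $b^{1,1}=b^{0,1}-\alpha\ge 0$, where $\alpha=(2m-a)-p_g=m$ is read off from the Gr\"obner formula at $a=0$, forcing $b^{0,1}=m$ and $b^{1,1}=0$, and then propagates $b^{0,1}=m$ to $a\ge 1$. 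You cannot import this anchor as written, because of a fact you yourself (correctly) point out: the member $a=0$, namely $x^3+y^3+z^{3m}+xyz^{m}$, is quasi-homogeneous of weight one, so by the part (1) mechanism $\overline{K}=Q^f$ and $\alpha=0$ there; in particular the identification $\overline{K}=(x,y,z^{2m-a})$, and with it the value $\alpha=m$, cannot be valid at $a=0$. You flag this as a ``reconciliation'' to be carried out, but the proposal never carries it out, and without it the only lower bound available from the members $a\ge1$ is $b^{0,1}\ge\alpha=m-a$, which does not close the argument. So the one step of the proof that is not a black-box citation --- pinning down $b^{0,1}=m$, which requires deciding precisely for which $a$ the Gr\"obner computation of $\overline{K}$ applies and then anchoring the deformation argument there --- is missing from your proposal.
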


Moving to higher dimensions, we show that for homogeneous hypersurfaces with isolated singularities, Bass' question has positive answer for the non-positive $K$-groups. The proof relies on the computation of the cohomology groups $H^q(S, \Omega_S^p(l))=0$ for smooth hypersurfaces in projective spaces. Complete computations can be found in \cite{Duc}. We state part of the theorem below in the form of what we need:
\begin{theorem}\label{thm:Bott-vanishing}
    Let $S$ be a smooth hypersurface in $\PP^n$ of degree $d$. Then 
    \begin{enumerate}
        \item For $l\ge 0$, we have $H^q(S, \Omega_S^p(l)) = 0$ if $p+q\neq \dim S$;
        \item For $p+q=\dim S$,
        \[H^q(S, \Omega_S^p(l)) = 0 \text{ for all $l\ge 1$ }\iff (p+1)d \le n+1.\]
    \end{enumerate}
    \begin{proof}
        The result follows from \cite{Duc}*{Theorem D, Proposition 1, Remark 1}, and also the paragraphs before the proof of Theorem D, when $d\ge 2$ and $n\ge 4$.

        When $d=1$, the result follows from Bott vanishing for projective spaces; when $n=1$, the statement is vacuous; when $n=2,3$, one can compute $H^q(S, \Omega_S^p(l))$ using the long exact sequence associated to the following short exact sequences (see \cite{Duc}*{Proposition 1} for one implication in (2) when $n=3$):
        \[0\to \O_{\PP^n}(-d+l)\to \O_{\PP^n(l)}\to \O_S(l)\to 0, \quad n=2,3\]
        \[0\to \Omega_{\PP^3}^1|_S(l)\to \O_S(l-1)^{\oplus 4}\to \O_S(l)\to 0\]
        \[0\to \O_S(l-d)\to \Omega_{\PP^3}^1|_S(l)\to \Omega_S^1(l)\to 0\]
        and verify that the results hold true.
    \end{proof}
\end{theorem}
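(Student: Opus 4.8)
The plan is to reduce every one of these groups to Bott vanishing on $\PP^n$, using two short exact sequences together with the adjunction identity $\omega_S\cong\O_S(d-n-1)$; throughout write $N=\dim S=n-1$, and it suffices to treat the twists $l\ge 1$. The first tool is the restriction sequence, obtained by tensoring $0\to\O_{\PP^n}(-d)\to\O_{\PP^n}\to\O_S\to 0$ with the locally free sheaf $\Omega_{\PP^n}^p(l)$:
\[0\to \Omega_{\PP^n}^p(l-d)\to \Omega_{\PP^n}^p(l)\to \Omega_{\PP^n}^p(l)|_S\to 0.\]
The second is the $p$-th exterior power of the conormal sequence $0\to\O_S(-d)\to\Omega_{\PP^n}^1|_S\to\Omega_S^1\to 0$:
\[0\to \Omega_S^{p-1}(l-d)\to \Omega_{\PP^n}^p(l)|_S\to \Omega_S^p(l)\to 0.\]
Splicing the second sequence down the form degree produces an exact complex resolving $\Omega_S^p(l)$ by restricted forms,
\[0\to \O_S(l-pd)\to \Omega_{\PP^n}^1(l-(p-1)d)|_S\to\cdots\to \Omega_{\PP^n}^p(l)|_S\to \Omega_S^p(l)\to 0,\]
and the first sequence computes the cohomology of each restricted form $\Omega_{\PP^n}^{p-i}(l-id)|_S$ from the cohomology of $\Omega_{\PP^n}^{p-i}$ on $\PP^n$ at the twists $l-id$ and $l-id-d$, where Bott's formula (valid for all twists) applies. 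The associated hypercohomology spectral sequence has $E_1$-terms $H^{\bullet}(S,\Omega_{\PP^n}^{p-i}(l-id)|_S)$ and abuts to $H^{\bullet}(S,\Omega_S^p(l))$.

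For part (1) I would run this reduction for $q\ge 1$ with $p+q\ne N$. The $E_1$-term contributing to $H^q(S,\Omega_S^p(l))$ through the $i$-th step carries $S$-bidegree $(p-i,q+i)$, of total degree $p+q\ne N$, and after passing to $\PP^n$ via the restriction sequence it involves $H^{q+i}(\PP^n,\Omega^{p-i}(l-id))$ and $H^{q+i+1}(\PP^n,\Omega^{p-i}(l-id-d))$. The key observation is that Bott nonvanishing on $\PP^n$ is confined to three loci—global sections at large positive twist, top cohomology at very negative twist, and the degree-$(j,j)$ Hodge class at twist zero—and I must verify that, for $l\ge 1$, none of these can be reached by a term lying off the antidiagonal $p+q=N$. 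Granting this, every $E_1$-term vanishes and hence $H^q(S,\Omega_S^p(l))=0$.

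For part (2) I would isolate the antidiagonal $p+q=N$, where the same reduction keeps every $E_1$-term on the antidiagonal of $S$ (bidegree $(p-i,N-p+i)$). For $l\ge 1$ the intermediate restricted-form terms are killed by Bott on $\PP^n$—they would require the exceptional coincidences $p-i=n/2$ and $l=(i+1)d$, which I would dispose of separately—leaving only the bottom term $i=p$, namely $H^N(S,\O_S(l-pd))$. Serre duality on $S$, with $\omega_S\cong\O_S(d-n-1)$ and $(\Omega_S^p)^\vee\cong\Omega_S^{N-p}\otimes\omega_S^{-1}$, then yields
\[H^N(S,\O_S(l-pd))^\vee\cong H^0\big(S,\O_S((p+1)d-n-1-l)\big).\]
This space of sections vanishes for every $l\ge 1$ precisely when $(p+1)d-n-2<0$, i.e. when $(p+1)d\le n+1$; and when $(p+1)d\ge n+2$ it is already nonzero at $l=1$, which produces the required nonvanishing class. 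As a sanity check, for $p=0$ this is the classical statement that $H^N(S,\O_S(l))=0$ for all $l\ge 1$ if and only if $d\le n+1$, matching $(0+1)d\le n+1$.

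The hardest part will be the antidiagonal bookkeeping in (2): showing that the resolution spectral sequence genuinely collapses to the single bottom term $H^N(S,\O_S(l-pd))$, i.e. that all intermediate restricted-form contributions and the exceptional Bott boundary terms either vanish or cancel, so that the threshold emerges as the clean inequality $(p+1)d\le n+1$ rather than an off-by-one variant. I would first confirm the whole method on the low-dimensional cases $n=1$ (vacuous) and $n=2,3$ directly from the two displayed sequences, since these both settle the theorem there and make the pattern of surviving terms transparent before committing to the general argument.
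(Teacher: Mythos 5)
Your route is genuinely different from the paper's proof, which for $d\ge 2$, $n\ge 4$ is simply a citation of Vo's results together with hand-checking of $d=1$ and $n\le 3$; a self-contained argument along your lines would be worthwhile, and your Serre-duality identification of the bottom term in part (2) is correct: $H^N(S,\O_S(l-pd))^\vee\cong H^0(S,\O_S((p+1)d-n-1-l))$, which vanishes for all $l\ge 1$ exactly when $(p+1)d\le n+1$. However, the pivot of your part (1) — that for $l\ge 1$ no off-antidiagonal $E_1$-term of the resolution spectral sequence meets a Bott nonvanishing locus, so that everything vanishes at $E_1$ — is false, in two ways. In the range $p+q>N$ the top-cohomology locus is hit for many twists: e.g.\ for $n=4$, $p=3$, $q=1$, the term $E_1^{-2,3}=H^3(S,\Omega^1_{\PP^4}(l-2d)|_S)$ involves $H^4(\PP^4,\Omega^1_{\PP^4}(l-3d))$, which is nonzero for every $1\le l<3d-3$; this range should instead be dispatched by Kodaira--Akizuki--Nakano vanishing (available since $\O_S(l)$ is ample), not by the resolution. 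More seriously, in the remaining range $p+q<N$ the Hodge-class locus is hit whenever $l$ is a positive multiple of $d$: for $l=id$ with $1\le i\le p$, the restriction sequence embeds $\CC=H^{p-i}(\PP^n,\Omega^{p-i}_{\PP^n})$ into $E_1^{-i,\,q+i}=H^{q+i}(S,\Omega^{p-i}_{\PP^n}|_S)$, where $q=p-2i$. Concretely, for $S\subset\PP^7$, $p=3$, $i=1$, $q=1$, $l=d$, one gets $E_1^{-1,2}\cong\CC$ in total degree $1$, although $p+q=4\ne 6=N$.

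Since the abutment $H^1(S,\Omega^3_S(d))$ is in fact zero, that class must die, and the only available differential is $d_1\colon E_1^{-1,2}\to E_1^{0,2}=H^2(S,\Omega^3_{\PP^7}(d)|_S)\cong\CC$. So the spectral sequence genuinely does not degenerate, and the proof requires a computation your outline omits: the $d_1$-differential is wedge with $df$, and on these classes it becomes, under the connecting isomorphisms, cup product with $c_1(\O_{\PP^n}(d))$, i.e.\ $d$ times the hyperplane class, hence an isomorphism in characteristic zero. This cancellation cannot be avoided — it is exactly the mechanism that keeps $h^{p,p}(S)=1$, rather than $0$, below the middle dimension — and the same point is what your part (2) needs: the nonzero bottom class in $E_1^{-p,N}$ must be shown to survive the differentials $d_r\colon E_r^{-p,N}\to E_r^{-p+r,\,N-r+1}$, whose targets are nonzero precisely at the coincidences you defer (and your list of these misses the family coming from the first term of the restriction sequence, namely $p-i=(n-1)/2$, $l=id$, in addition to $p-i=n/2$, $l=(i+1)d$). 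Finally, your opening claim that ``it suffices to treat $l\ge 1$'' silently alters the statement: as printed, part (1) is false at $q=0$ (where $H^0(S,\O_S(l))\ne 0$ for $l\ge 1$) and at $l=0$, $p=q$ (Hodge classes); the scope actually used in Proposition \ref{prop:Bass-for-homog-hypsurf} is $q\ge 1$, $l\ge 1$, so any correct write-up must state these corrected hypotheses explicitly rather than assume them.
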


\begin{proposition}\label{prop:Bass-for-homog-hypsurf}
    Let $X$ be a homogeneous hypersurface with at worst isolated singularities. Then for every $m$,
    \[K_m(X)\xto{\sim} K_m(X\times \AA^1) \implies K_m(X)\xto{\sim} K_m(X\times \AA^r)\]
    for all $r\ge 1$. In particular, if $K_2(X)\simeq K_2(X\times \AA^1)$, then $X$ is smooth.

    \begin{proof}
        Since $X$ has isolated singularities, we may assume it is affine. Let $F$ be the field of definition of $X$. If $F$ has infinite transcendence degree over $\QQ$, then by \cite{K-Bass}*{Theorem 0.2(b)}, Bass' question has positive answer. Thus we may assume $F$ is algebraic over $\QQ$. 

        By \cite{K-Bass}*{Theorem 0.1}, the implication we need to show is equivalent to
        \[NK_m(X)=0\implies NK_i(X)=0 \text{ for $i<m$}. \]
        Applying \cite{K-Bass}*{Theorem 0.4}, this is in turn equivalent to  
        \[\H^qC^p=0 \text{ for $q-p=-m$} \implies \H^qC^p=0 \text{ for $q-p=-m-1$},\]
        where $C^p = \Cone(\LL_{X/\QQ}^p\to \DB_{X/\QQ}^p)$. Since $F$ is algebraic over $\QQ$, we have $\Omega_{F/\QQ}^p=0$ for all $p\ge 1$, so the spectral sequence in Lemma \ref{lemma: DB-change-of-field}(1) degenerates, giving that
        \[\H^q\DB_{X/F}^p \simeq \H^q\DB_{X/\QQ}^p\]
        and
        \[\H^q\LL_{X/F}^p \simeq \H^q\LL_{X/\QQ}^p.\]
        Note that since $X$ is a hypersurface with isolated singularities, as in the proof of Lemma \ref{lemma:lci-DB-def}, we have $\LL_{X/F}^p \simeq \Omega_{X/F}^p$ for $p\le \dim X$. Moreover, by Theorem \ref{thm:reflexive-Kahler-diff}, we have $\H^0 C^p=0$ for $p\le \dim X-2$ and $\H^{-1} C^p = 0$ for $p\le \dim X-1$. Consequently, the exact sequences in \cite{K-cones}*{Proposition 1.9} gives the isomorphisms
        \[\tors \Omega_{X/F}^{n} \simeq \tors \Omega_{X/F}^{n + 1}\]
        and
        \[\H^0\DB_{X/F}^{n-1}/\Omega_{X/F}^{n-1} \simeq \H^0\DB_{X/F}^{n}/\Omega_{X/F}^{n},\]
        where $n=\dim X$. Therefore, it suffices for us to check
        \[\H^q\DB_{X/F}^p=0 \text{ for $q-p=-m$} \implies \H^q\DB_{X/F}^p=0 \text{ for $q-p=-m-1$}.\]

        Since $X$ is a homogeneous hypersurface with isolated singularities, we can view it as an affine cone over a smooth projective hypersurface $S$. Using the computation of $\H^q\DB_{X/F}^p$ for cones \cite{K-cones}, as well as Theorem \ref{thm:Bott-vanishing}, we see that the Du Bois table of $X$ takes the following form:
        \begin{center}
        \adjustbox{scale=1}{
    \begin{tabular}{|c c c c}
        $b^{0, d} = 0$ & &\\
        $b^{0,d-1}=\underset{m\ge 1}{\bigoplus} H^{d-1}(\O_S(m))$ & $b^{1,d-1}=\underset{m\ge 1}{\bigoplus} H^{d-1}(\O_S(m))$ & \\
        & $b^{1,d-2}=\underset{m\ge 1}{\bigoplus} H^{d-2}(\Omega^1_S(m))$ & $b^{2,d-2}=\underset{m\ge 1}{\bigoplus} H^{d-2}(\Omega^1_S(m))$ \\
        & & $b^{2, d-3} =\underset{m\ge 1}{\bigoplus} H^{d-3}(\Omega_S^2(m))$ \\
        & & &$\cdots$ \\
        \hline
    \end{tabular}}\end{center}
        Thus, we only need to check
        \[H^q(\Omega^p_S(m))=0 \text{ for all $m\ge 1$} \implies H^{q+1}(\Omega^{p-1}_S(m))=0\text{ for all $m\ge 1$}\]
        for $p+q = \dim S$. This follows from Theorem \ref{thm:Bott-vanishing}(2).
    \end{proof}
\end{proposition}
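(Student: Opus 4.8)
The plan is to translate the statement about $\AA^1$-invariance of $K$-groups into a purely sheaf-cohomological implication on a smooth projective hypersurface, where it follows from the monotonicity of the Bott-type vanishing condition in Theorem \ref{thm:Bott-vanishing}. A homogeneous hypersurface with isolated singularities is the affine cone over a smooth projective hypersurface $S \subset \PP^n$, so $X$ is affine with its only singular point at the vertex; in particular the local-to-global spectral sequence degenerates as in Proposition \ref{prop:relation-general}(2) and there is no loss in working affinely. Let $F$ be the field of definition. If $F$ has infinite transcendence degree over $\QQ$ the conclusion is immediate from \cite{K-Bass}*{Theorem 0.2(b)}, so I may assume $F$ is algebraic over $\QQ$. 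By \cite{K-Bass}*{Theorem 0.1} the desired implication is equivalent to $NK_m(X)=0 \implies NK_i(X)=0$ for all $i<m$, and by Theorem \ref{thm:CHWW-main} together with Remark \ref{rmk:CHWW-main} this reduces to showing
\[\H^q C^p = 0 \text{ for } q-p=-m \implies \H^q C^p = 0 \text{ for } q-p=-m-1,\]
where $C^p = \Cone(\LL^p_{X/\QQ}\to \DB^p_{X/\QQ})$.

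Next I would strip away the nuisance cohomology so that only the Du Bois sheaves matter. Since $F$ is algebraic over $\QQ$ we have $\Omega^i_{F/\QQ}=0$ for $i\ge 1$, so the base-change spectral sequences of Lemma \ref{lemma: DB-change-of-field}(1) degenerate and identify $\H^q\DB^p_{X/\QQ}\simeq \H^q\DB^p_{X/F}$, and likewise for $\LL^p$. Because $X$ is a hypersurface with isolated singularities, $\LL^p_{X/F}\simeq \Omega^p_{X/F}$ in the relevant range, and Theorem \ref{thm:reflexive-Kahler-diff} controls the torsion and cotorsion of the K\"ahler differentials, giving $\H^0 C^p=0$ for $p\le \dim X-2$ and $\H^{-1}C^p=0$ for $p\le \dim X-1$; the remaining boundary terms near $p=\dim X$ are matched to one another via the exact sequences of \cite{K-cones}*{Proposition 1.9}. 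The upshot is that it suffices to prove the implication
\[\H^q\DB^p_{X/F}=0 \text{ for } q-p=-m \implies \H^q\DB^p_{X/F}=0 \text{ for } q-p=-m-1.\]

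Finally, realizing $X$ as the affine cone over $S\subset \PP^n$ of degree $d$, I would compute the Du Bois table of $X$ from the cone formulas of \cite{K-cones} and Theorem \ref{thm:Bott-vanishing}(1): the nonzero entries lie along the antidiagonal $p+q=\dim S$ and are given by $\bigoplus_{l\ge 1}H^q(S,\Omega^p_S(l))$. The implication to be checked thus becomes
\[H^q(\Omega^p_S(l))=0 \ (\forall\, l\ge 1) \implies H^{q+1}(\Omega^{p-1}_S(l))=0 \ (\forall\, l\ge 1)\]
for $p+q=\dim S$. This is exactly where Theorem \ref{thm:Bott-vanishing}(2) delivers the conclusion: the left-hand vanishing is equivalent to $(p+1)d\le n+1$, while the right-hand vanishing (noting $(p-1)+(q+1)=\dim S$) is equivalent to $pd\le n+1$, and the former trivially implies the latter.

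The main obstacle, and the conceptual heart of the argument, is precisely this monotonicity: the Bott vanishing threshold $(p+1)d\le n+1$ only becomes easier to satisfy as $p$ decreases, so that $\AA^1$-invariance detected at level $m$ propagates downward to all smaller indices. Everything preceding it — the split into infinite versus algebraic transcendence degree, the degeneration of the base-change spectral sequences, and the torsion/cotorsion bookkeeping at the edge of the Du Bois table — is essentially formal once the cone computation is in hand. For the last assertion, $K_2(X)\cong K_2(X\times\AA^1)$ gives $NK_2(X)=0$; the main statement then upgrades this to $K_2$-regularity, and since $X$ is an affine local complete intersection, Theorem \ref{intro-thm:regularity-into} forces $X$ to be smooth.
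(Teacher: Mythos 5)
Your proposal is correct and follows essentially the same route as the paper's own proof: the same reduction to the affine cone and the transcendence-degree split, the same translation via \cite{K-Bass}*{Theorems 0.1, 0.4} and degeneration of the base-change spectral sequences, the same torsion/cotorsion bookkeeping via Theorem \ref{thm:reflexive-Kahler-diff} and \cite{K-cones}*{Proposition 1.9}, and the same final reduction to the antidiagonal Du Bois table and Theorem \ref{thm:Bott-vanishing}(2). Your only additions — spelling out the monotonicity $(p+1)d\le n+1 \implies pd\le n+1$ and deducing smoothness from Theorem \ref{intro-thm:regularity-into} — are correct elaborations of steps the paper leaves implicit.
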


\begin{remark}
    With notations as in the proof above, we have that for $p\le \dim X-2$, the following are equivalent:
    \begin{enumerate}
        \item $X$ has (pre-)$p$-Du Bois singularities;
        \item $X$ is $K_{-d+2+2p}$-regular;
        \item $NK_{-d+2+2p}(X)=0$;
        \item $\deg f\le \frac{d+1}{p+1}$.
    \end{enumerate}
    For $X$ to be singular, one needs $2\le \deg f\le \frac{d+1}{p+1}$, in which case $-d+2+2p\le 1$, so $X$ can be at most $K_1$-regular, as predicted by Theorem \ref{intro-thm:regularity}.
\end{remark}

\section{\texorpdfstring{$K$-regularity for projective varieties}{K-regularity for projective varieties}}
\label{sec:K-reg-for-proj-var}
In this section, we apply the main theorem of \cites{K-Weibel} to characterize $K$-regularity for projective varieties. I thank Dori Bejleri, Anh Duc Vo, and especially Elden Elmanto for discussions that shaped the contents presented here.

Our main result is the following characterization of $K_m$-regularity for projective varieties over a field $F$ of characteristic zero. 
\begin{theorem}[$\implies$ Theorem \ref{intro-thm:CHWW-projective}]
    \label{thm:CHWW-projective}
    Let $X$ be a projective variety over $F$. Then $X$ is $K_m$-regular if and only if
    \[\HH^i(X,\LL_{X/F}^p)\xto{\sim} \HH^i(X, \DB_{X/F}^p)\]
    are isomorphisms for $i-p\ge -m+1$.
\end{theorem}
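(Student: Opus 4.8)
The plan is to reduce the projective case to the affine characterization of $K$-regularity established in Theorem \ref{thm:CHWW-local} via the main theorem of \cite{K-Weibel}, specifically \cite{K-Weibel}*{Theorem 5.5}, which relates the $K$-theory of a projective variety to the cohomology of the sheafified $NK$-groups and ultimately to the cones $C^p = \Cone(\LL^p_{X/\QQ} \to \DB^p_{X/\QQ})$. The strategy mirrors the arguments already used for the affine statements, but now the local-to-global spectral sequence
\[
E_2^{ab} = H^a(X, \N\K_b(X)) \implies NK_{b-a}(X)
\]
no longer degenerates, so I must carry the full hypercohomology of the cones rather than just their cohomology sheaves. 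The key technical upgrade I expect to need is the degeneration of the Hodge-to-de Rham spectral sequence, which is available here precisely because $X$ is projective and we are in characteristic zero; this is what converts the statement about $\HH^i$ of the individual graded pieces $\LL^p$ and $\DB^p$ into a clean equivalence.

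First I would translate $K_m$-regularity into the vanishing of $NK_q(X)$ for $q \le m$, using \cite{K-Bass}*{Theorem 0.1} (the reduction of $N^rK$-regularity to vanishing of the $NK$-groups). Next, I would feed the local description $\N\K_q(X) \simeq \bigoplus_{b-a=-q} \H^b C^a \otimes t\QQ[t]$ from Theorem \ref{thm:CHWW-local} into the local-to-global spectral sequence, so that $NK_{q-p}(X) = 0$ for all relevant $q,p$ becomes equivalent to the vanishing of the hypercohomology groups $\HH^i(X, C^p)$ in the appropriate range. Concretely, collecting the contributions, $K_m$-regularity should become equivalent to
\[
\HH^i(X, C^p) = 0 \quad \text{for all } i - p \ge -m+1,
\]
where the shift by $1$ compared to the affine statement in Remark \ref{rmk:CHWW-main} is exactly the cost of passing through the global hypercohomology (the extra row index in the spectral sequence). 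Since $C^p$ is the cone of $\LL^p_{X/F} \to \DB^p_{X/F}$, the vanishing of $\HH^i(X, C^p)$ in this range is equivalent to the stated isomorphisms $\HH^i(X, \LL^p_{X/F}) \xto{\sim} \HH^i(X, \DB^p_{X/F})$ for $i - p \ge -m+1$.

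\emph{The subtle point} is passing between coefficients over $\QQ$ and over the field $F$: the cones $C^p$ governing $NK$ are defined relative to $\QQ$, whereas the statement is phrased relative to $F$. Here I would invoke Lemma \ref{lemma: DB-change-of-field}, and more importantly the degeneration of the Hodge-to-de Rham spectral sequence for the projective variety $X$. The degeneration ensures that $\HH^i(X, \dR_{X/F})$ splits as $\bigoplus_p \HH^{i}(X, \LL^p_{X/F}[-p])$, so that comparing the derived de Rham cohomology with its Du Bois (Hodge-filtered de Rham) analogue reduces term-by-term to the individual graded pieces. This is what allows the single global equivalence of spectral sequences over $\QQ$ to be unwound into the clean statement over $F$ about each $\LL^p_{X/F}$ and $\DB^p_{X/F}$ separately.

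\textbf{The main obstacle} I anticipate is bookkeeping the index shift correctly and justifying that the non-degeneration of the local-to-global spectral sequence does not obstruct the equivalence — that is, showing that the vanishing of the \emph{associated-graded} pieces $H^a(X, \H^b C^p)$ in the right range is genuinely equivalent to the vanishing of the \emph{abutment} $\HH^i(X, C^p)$, rather than merely implying it in one direction. In the affine case this issue evaporates because only the $a=0$ row survives; in the projective case I expect to need the Hodge-to-de Rham degeneration not just as a convenience but as the essential input that collapses the relevant spectral sequence and makes the two-sided equivalence go through. Verifying this collapse, and confirming that no differentials can produce spurious cancellation among the $\HH^i(X, C^p)$ across different $p$, is where the real work lies.
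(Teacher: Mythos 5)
Your proposal shares the paper's starting point (the fibration sequence of \cite{K-Weibel}*{Theorem 5.5}) but the mechanism you actually run — Theorem \ref{thm:CHWW-local} fed into the local-to-global spectral sequence $E_2^{ab}=H^a(X,\NK_b(X))\Rightarrow NK_{b-a}(X)$ — has a genuine gap at its center. That spectral sequence only gives one implication: vanishing of the $E_2$-terms in the relevant total degrees forces vanishing of the abutment. The forward direction of the theorem needs the converse, namely that $NK_q(X)=0$ for $q\le m$ forces the vanishing of the cone (hyper)cohomology, and nothing you invoke supplies this. Your proposed fix — that Hodge-to-de Rham degeneration ``collapses the relevant spectral sequence'' — conflates unrelated spectral sequences: degeneration concerns the Hodge filtration on $\HH^*(X,\DB_{X/\CC}^{\bullet})$ and says nothing about the local-to-global sequence for the sheaves $\NK_q$, which genuinely fails to degenerate for projective varieties (this failure is exactly the phenomenon behind Examples \ref{ex:pre-DB-but-not-K-1-regular} and \ref{ex:locally-K-regular-does-not-imply-K-regular}). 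Moreover, your claim that degeneration splits $\HH^i(X,\dR_{X/F})$ as $\bigoplus_p \HH^i(X,\LL_{X/F}^p[-p])$ is false for singular $X$: the filtration spectral sequence on \emph{derived} de Rham cohomology does not degenerate; only the Du Bois side does. This asymmetry is precisely why the available Hodge-theoretic input is a surjectivity statement (Proposition \ref{prop:surjectivity-over-C}), not a splitting. There is also an indexing error: comparing with Remark \ref{rmk:CHWW-main}, the cone-level reformulation of $K_m$-regularity is $\HH^i(X,C^p)=0$ for $i-p\ge -m$ (no shift relative to the affine case), and since $\HH^i(X,C^p)=0$ in a range is equivalent to surjectivity in that range plus injectivity one step above, your range $i-p\ge -m+1$ would, granting the automatic surjectivity, characterize $K_{m-1}$-regularity rather than $K_m$-regularity.

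What the paper does instead is to avoid sheaf-level $NK$-groups entirely: it takes the long exact sequence (\ref{eqn:K-regularity-LES}) coming from the global fibration sequence, so that $K_m$-regularity translates directly into surjectivity of $H^i(\tilde{C}_j\HC(X))\to H^i_{cdh}(\tilde{C}_j\HC(X))$ for $i\ge -m$ together with injectivity for $i\ge -m+1$, for all $j$. Proposition \ref{prop:surjectivity-over-C} (degeneration on the Du Bois side plus Bhatt's identification of derived de Rham cohomology with Betti cohomology) then plays two roles your outline misses: it supplies the boundary surjectivity for free — which is exactly how ``cone vanishing for $i-p\ge -m$'' gets converted into ``isomorphisms for $i-p\ge -m+1$'' in the statement — and it powers the inductions (Claims 1--3 of the paper) needed to pass from the filtered complexes $L\Omega_{X/\QQ}^{\le p}$, $\DB_{X/\QQ}^{\le p}$ appearing in cyclic homology to the individual graded pieces $\LL_{X/F}^p$, $\DB_{X/F}^p$ over $F$, and to strip off the $\AA^j$-factors hidden in $\tilde{C}_j$. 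These filtered-to-graded, $\QQ$-to-$F$, and $\AA^j$-descent steps are where most of the actual work lies, and they cannot be compressed into a citation of Lemma \ref{lemma: DB-change-of-field}.
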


\begin{remark}
    Let $X$ be a projective variety of dimension $d$. It follows from Theorem \ref{thm:CHWW-projective} that
    \[\text{$X$ is $K_{-d+1}$-regular} \iff \HH^d(X,\O_X)\to \HH^d(X,\DB_X^0)\text{ is an isomorphism}.\]
    In particular, if $X$ has Du Bois singularities then it is $K_{-d+1}$-regular, which is consistent with Proposition \ref{prop:relation-general}(1).
\end{remark}

By \cite{K-Weibel}*{Theorem 5.5}, there is a homotopy fibration sequence
\[\tilde{C_j}\HC\to \tilde{C}_jL_{cdh}\HC\to \tilde{C_j} \K,\]
where $\K$ denotes the non-connective $K$-theory spectrum of perfect complexes on $\rm Sch/F$, $\tilde{C}_j \E$ denotes the cofiber of the map $\E\to \E(-\times \AA^j)$ for a presheaf of spectra on $\rm Sch/F$, and $\HC$ is the presheaf of complexes computing cyclic cohomologies over $\QQ$. Following \textit{loc. cit.}, we write $H^i_{cdh}$ for the cohomology taken in cdh topology, and by $H^i$ the cohomology taken in Zariski topology. By definition, a scheme $X$ is $K_m$-regular if and only if $\pi_m\tilde{C_j}\K(X)=\ker(K_m(X)\to K_m(X\times \AA^j))=0$ for all $j\ge 0$.

Taking cohomology, we obtain a long exact sequence
\begin{align}\label{eqn:K-regularity-LES}
\begin{split}
\cdots \to &H^{d-1}(\tilde{C}_j\HC(X))\to H^{d-1}_{cdh}(\tilde{C}_j \HC(X))\to \pi_{-d+1}\tilde{C}_j \K(X) \\
\to & H^{d}(\tilde{C}_j\HC(X))\to H^{d}_{cdh}(\tilde{C}_j \HC(X))\to \pi_{-d}\tilde{C}_j \K(X) \to H^{d+1}(\tilde{C}_j\HC(X))
\end{split}
\end{align}

Thus, Theorem \ref{thm:CHWW-projective} can be proved by analyzing the surjectivity of the maps
\[H^i(\tilde{C}_j\HC(X))\to H^i_{cdh}(\tilde{C}_j \HC(X)).\]
The main Hodge-theoretic input for the proof is the following result, whose statement goes back to at least Du Bois \cite{DB} when $F=\CC$ and $p=0$:

\begin{proposition}\label{prop:surjectivity-over-C}
    Let $X$ be a projective variety over $F$. Then there is a surjection
    \[\HH^i(X, L\Omega_{X/F}^{\le p})\to \HH^i(X, \DB_{X/F}^{\le p})\]
    for all integers $i$ and $p\ge 0$.
    \begin{proof}
        By standard base change arguments (see the footnote to the proof of Lemma \ref{lemma: DB-change-of-field}), we reduce to the case when $F=\CC$.
        When $X$ is a complex projective variety, the Hodge-to-de-Rham spectral sequence
        \[E_1^{p,q}=H^q(X,\DB_{X/\CC}^p)\implies \HH^{p+q}(X,\CC)\]
        degenerates at $E_1$, giving a surjection
        \[H^i(X,\CC)\to H^i(X,\CC)/F^{p+1}H^i(X,\CC) = \HH^i(X,\DB_{X/\CC}^{\bullet}/F^{p+1}\DB_{X/\CC}^{\bullet})=:\HH^i(X,\DB_{X/\CC}^{\le p})\]
        for all integers $i$ and $p\ge 0$. 

        By \cite{Bhatt-derived}, the derived de Rham complex $\widehat{\rm dR}_{X/\CC}$ computes the Betti cohomology of $X(\CC)$, so we have the following commutative diagram
        \[\begin{tikzcd}
            &H^i(X,\CC) \ar[r,two heads] &H^i_{cdh}(X,\Omega_{X/\CC}^{\le p})\simeq \HH^i(X,\DB_{X/\CC}^{\le p}) \\
                &H^i(X,\widehat{\rm dR}_{X/\CC})\ar[u,no head, "\sim"] \ar[r] &H^i(X,L\Omega_{X/\CC}^{\le p}),\ar[u]
        \end{tikzcd}\]
        Since the top horizontal row is a surjection, we have that the maps
        \[H^i(X,L\Omega_{X/\CC}^{\le p})\to H^i_{cdh}(X,\Omega_{X/\CC}^{\le p})\simeq \HH^i(X,\DB_{X/\CC}^{\le p})\]
        are surjective, for all integers $i$ and $p\ge 0$.
    \end{proof}
\end{proposition}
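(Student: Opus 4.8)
The plan is to realize the asserted surjection as the map induced on hypercohomology by the natural filtered comparison $\varphi\colon \dR_{X/F}\to \DB_{X/F}^{\bullet}$ from Section \ref{subsec:derived-dR}. Passing to the quotients by $\Fil_H^{p+1}$ on the source and by the Hodge filtration $F^{p+1}$ on the target produces the map $L\Omega_{X/F}^{\le p}\to \DB_{X/F}^{\le p}$, and it is its surjectivity on each $\HH^i$ that must be shown. First I would reduce to the case $F=\CC$: both $\HH^i(X, L\Omega_{X/F}^{\le p})$ and $\HH^i(X, \DB_{X/F}^{\le p})$ are compatible with extension of the base field — on the Du Bois side this is Lemma \ref{lemma: DB-change-of-field}(2), and the derived de Rham side behaves identically — so it suffices to check surjectivity after the faithfully flat base change to $\CC$, exactly as in the base-change argument recorded in the footnote to the proof of Lemma \ref{lemma: DB-change-of-field}.

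The heart of the argument is then Hodge-theoretic. For $X$ a complex projective variety, the Hodge-to-de Rham spectral sequence
\[E_1^{p,q}=H^q(X,\DB_{X/\CC}^p)\implies \HH^{p+q}(X,\DB_{X/\CC}^{\bullet})=H^{p+q}(X,\CC)\]
degenerates at $E_1$ by the theorem of Du Bois \cite{DB}, building on Deligne's mixed Hodge theory; equivalently, the Hodge filtration on $H^i(X,\CC)=\HH^i(X,\DB_{X/\CC}^{\bullet})$ is strict. Strictness means precisely that the quotient map
\[H^i(X,\CC)=\HH^i(X,\DB_{X/\CC}^{\bullet})\longrightarrow \HH^i(X,\DB_{X/\CC}^{\bullet}/F^{p+1})=\HH^i(X,\DB_{X/\CC}^{\le p})\]
is surjective for every integer $i$ and every $p\ge 0$. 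On the other side, I would invoke Bhatt's comparison \cite{Bhatt-derived}, which identifies the Hodge-completed derived de Rham cohomology with Betti cohomology, giving a natural isomorphism $\HH^i(X,\dR_{X/\CC})\simeq H^i(X,\CC)$ compatible with $\varphi$.

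Assembling these inputs, I would form the commutative square whose left column is the isomorphism $\HH^i(X,\dR_{X/\CC})\xto{\sim} H^i(X,\CC)$, whose top arrow is the strictness surjection above, and whose vertical arrows are induced by $\varphi$ on the quotients $L\Omega_{X/\CC}^{\le p}$ and $\DB_{X/\CC}^{\le p}$. Since the composite $\HH^i(X,\dR_{X/\CC})\to \HH^i(X,\DB_{X/\CC}^{\le p})$ is surjective and factors through $\HH^i(X,L\Omega_{X/\CC}^{\le p})$, the map $\HH^i(X,L\Omega_{X/\CC}^{\le p})\to \HH^i(X,\DB_{X/\CC}^{\le p})$ is forced to be surjective, as desired. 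The main obstacle is not the diagram chase but correctly locating and invoking the two black-box ingredients — the $E_1$-degeneration for (possibly singular) projective varieties and the Betti comparison for Hodge-completed derived de Rham cohomology — and verifying that both are compatible with the \emph{single} natural filtered map $\varphi$, so that one genuine surjection propagates through the square; once this compatibility is in hand, the base change and the strictness argument are routine.
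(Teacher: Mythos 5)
Your proposal is correct and follows essentially the same route as the paper's proof: reduce to $F=\CC$ via the base-change argument in the footnote to Lemma \ref{lemma: DB-change-of-field}, use $E_1$-degeneration of the Hodge-to-de Rham spectral sequence to get the surjection $H^i(X,\CC)\to \HH^i(X,\DB_{X/\CC}^{\le p})$, invoke Bhatt's Betti comparison for $\dR_{X/\CC}$, and conclude by the factorization through $\HH^i(X,L\Omega_{X/\CC}^{\le p})$ in the commutative square induced by $\varphi$. The only cosmetic difference is that you phrase the degeneration as strictness of the Hodge filtration, which is equivalent to what the paper uses.
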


We are now ready to prove the main theorem of this section.

\begin{proof}[Proof of Theorem \ref{thm:CHWW-projective}]
    First assume $X$ is $K_m$-regular. We want to show
    \[\HH^i(X,\LL_{X/F}^p)\xto{\sim} \HH^i(X, \DB_{X/F}^p)\]
    are isomorphisms when $i-p\ge -m+1$. By (\ref{eqn:K-regularity-LES}), we have that
    \[H^i(X, \tilde{C}_j\HC(X))\xto{\sim} H^i_{cdh}(X,\tilde{C}_j\HC(X))\]
    are isomorphisms for $i\ge -m+1$. Using the decomposition of cyclic homology
    \[\HC(X) = \bigoplus_{p\ge 0} L\Omega_{X/\QQ}^{\le p}[2p],\]
    and the fact that the cdh-sheafification of $L\Omega_{X/\QQ}^{\le p}$ is $\DB_{X/\QQ}^{\le p}$, we see that 
    \[\HH^{i+2p}(X, \tilde{C}_jL\Omega_{X/\QQ}^{\le p})\xto{\sim} \HH^{i+2p}_{cdh}(X,\tilde{C}_j\DB_{X/\QQ}^{\le p})\]
    are isomorphisms for all $i\ge -m+1$ and $p\ge 0$\footnote{It is unclear that this directly implies
    \[\HH^{i+2p}(X, L\Omega_{X/\QQ}^{\le p})\xto{\sim} \HH^{i+2p}_{cdh}(X,\DB_{X/\QQ}^{\le p})\]
    are isomorphisms for all $i\ge -m+1$ and $p\ge 0$, so the argument that follows takes a somewhat roundabout path.}. 
    \vskip \medskipamount
    \noindent\textit{Claim 1:} The maps $\HH^{i+2p}(X, \tilde{C}_jL\Omega_{X/\QQ}^{\le p})\xto{\sim} \HH^{i+2p}_{cdh}(X,\tilde{C}_j\DB_{X/\QQ}^{\le p})$ are isomorphisms for all $i\ge -m+1$ and $p\ge 0$. By induction, it suffices to show $\HH^{-m+1+2p}(X, \tilde{C}_jL\Omega_{X/\QQ}^{\le p})\xto{\sim} \HH^{-m+1+2p}_{cdh}(X,\tilde{C}_j\DB_{X/\QQ}^{\le p})$ are isomorphisms for all $p\ge 0$.
    
    Applying $\tilde{C}_j$ in Lemma \ref{lemma: DB-change-of-field}(1), we obtain spectral sequences
    \[E_1^{ij}=\Omega^i_{F/\QQ}\otimes \HH^j(X, L\Omega_{X/F}^{\le p-i})\implies \HH^{i+j}(X, L\Omega_{X/\QQ}^{\le p})\]
    \[E_1^{ij}=\Omega^i_{F/\QQ}\otimes \HH^j(X, \DB_{X/F}^{\le p-i})\implies \HH^{i+j}(X, \DB_{X/\QQ}^{\le p})\]

    Analyzing the spectral sequence, we see that it is enough to show the maps
    \[\HH^{-m+2p-1-i}(X,\tilde{C}_j L\Omega_{X/F}^{\le p-i})\to \HH^{-m+2p-1-i}(X,\tilde{C}_j \DB_{X/F}^{\le p-i})\]
    are surjective for all $i\ge 0$. This is implied by the surjectivity of the maps
    \[\HH^{-m+2p-1-i}(X,L\Omega_{X\times \AA^j/F}^{\le p-i})\to \HH^{-m+2p-1-i}(X,\DB_{X\times \AA^j/F}^{\le p-i})\]
    for $i\ge 0$. To prove the latter surjectivity, we analyze the spectral sequences, proved along the same lines as Lemma \ref{lemma: DB-change-of-field}:
    \[E_1^{ij}= \HH^j(p^*L\Omega^i_{X/F}\otimes \Omega_{X\times \AA^j/X}^{\le p-i})\implies \HH^{i+j}(L\Omega_{X\times \AA^j/k}^{\le p})\]
    \[E_1^{ij}= \HH^j(p^*\DB^i_{X/F}\otimes\Omega_{X\times \AA^j/X}^{\le p-i})\implies \HH^{i+j}(\DB_{X\times \AA^j/k}^{\le p})\]
    We need to check surjectivity of the maps $E_1^{ij}\to \tilde{E}_1^{ij}$ for $i+j=-m+2p-1-i$. That is, we need to show the maps
    \[\HH^{-m+2p-1-i-k}(p^*L\Omega_{X/F}^k\otimes \Omega_{X\times \AA^j/X}^{\le p-i-k})\to \HH^{-m+2p-1-i-k}(p^*\DB_{X/F}^k\otimes \Omega_{X\times \AA^j/X}^{\le p-i-k})\]
    are surjective for $0\le k\le p-i$. By induction using long exact sequences associated to
    \[\Omega_{X\times \AA^j/X}^{p-i-k}[-p+i+k]\to \Omega_{X\times \AA^j/X}^{\le p-i-k}\to \Omega_{X\times \AA^j/X}^{\le p-i-k-1}\xto{+1},\]
    it suffices to show the maps
    \[\HH^{-m+p-1}(L\Omega_{X/F}^k)\to \HH^{-m+p-1}(\DB_{X/F}^k)\]
    are surjective, for all $p\ge 0$ and $k\le p$. This follows by induction using
    \[\LL_{X/F}^k[-k]\to L\Omega_{X/F}^{\le k}\to L\Omega_{X/F}^{\le k-1}\xto{+1}\]
    \[\DB_{X/F}^k[-k]\to \DB_{X/F}^{\le k}\to \DB_{X/F}^{\le k-1}\xto{+1}\]
    and Proposition \ref{prop:surjectivity-over-C}. This finishes the proof of Claim 1.

    \vskip \medskipamount
    \noindent\textit{Claim 2:} The maps $\HH^{i+p}(\tilde{C}_j\LL_{X/F}^{p})\xto{\sim} \HH^{i+p}_{cdh}(\tilde{C}_j\DB_{X/F}^{p})$ are isomorphisms for all $i\ge -m+1$ and $p\ge 0$. 

    Indeed, this follows by induction using
    \[\tilde{C}_j\LL_{X/F}^p[-p]\to \tilde{C}_jL\Omega_{X/F}^{\le p}\to \tilde{C}_jL\Omega_{X/F}^{\le p-1}\xto{+1}\]
    \[\tilde{C}_j\DB_{X/F}^p[-p]\to \tilde{C}_j\DB_{X/F}^{\le p}\to \tilde{C}_j\DB_{X/F}^{\le p-1}\xto{+1}\]
    and Proposition \ref{prop:surjectivity-over-C}. 

    \vskip \medskipamount
    \noindent\textit{Claim 3:} The maps $\HH^{i+p}(\LL_{X/F}^{p})\xto{\sim} \HH^{i+p}_{cdh}(\DB_{X/F}^{p})$ are isomorphisms for all $i\ge -m+1$ and $p\ge 0$. 

    We have a split exact triangle (the splitting is induced by the zero section $s: X\to X\times \AA^j$):
    \[p^*\Omega^1_{X/F} \to \Omega^1_{X\times \AA^j/F}\to p^*\O_X\otimes \Omega_{X\times \AA^j/X}^1\xto{+1}\]
    Taking wedge product gives
    \[\Omega^p_{X\times \AA^j/F}\simeq \bigoplus_{i=0}^p p^*\Omega_X^i\otimes \Omega^{p-i}_{X\times \AA^j/X}\]
    (with the convention $\Omega_{X\times \AA^j/X}^p = \O_{X\times \AA^j}$ when $p=0$).

    Taking derived functor and cdh-sheafification gives
    \[\LL^p_{X\times \AA^j/F}\simeq \bigoplus_{i=0}^p p^*\LL_X^i\otimes \Omega^{p-i}_{X\times \AA^j/X}\]
    \[\DB^p_{X\times \AA^j/F}\simeq \bigoplus_{i=0}^p p^*\DB_X^i\otimes \Omega^{p-i}_{X\times \AA^j/X}\]
    Thus
    \[\HH^{i+p}(\tilde{C}_j\LL^p_{X\times \AA^j/F})\simeq \HH^{i+p}(\LL_X^p\otimes V_j)\oplus \bigoplus_{i=0}^{p-1} \HH^{i+p}(p^*\LL_X^i\otimes \Omega^{p-i}_{X\times \AA^j/X})\]
    \[\HH^{i+p}(\tilde{C}_j\DB^p_{X\times \AA^j/F})\simeq \HH^{i+p}(\DB_X^p\otimes V_j)\oplus \bigoplus_{i=0}^{p-1} \HH^{i+p}(p^*\DB_X^i\otimes \Omega^{p-i}_{X\times \AA^j/X})\]
    where $V_j = F[t_1,\cdots, t_j]/F$.

    Since the maps in claim 2 are compatible with this direct sum decomposition, the maps $\HH^{i+p}(\LL_{X/F}^{p})\xto{\sim} \HH^{i+p}_{cdh}(\DB_{X/F}^{p})$ are isomorphisms for all $i\ge -m+1$ and $p\ge 0$. 

    This completes the proof in one direction. For the converse, assume that the maps
    \[\HH^i(X,\LL_{X/F}^p)\xto{\sim} \HH^i(X, \DB_{X/F}^p)\]
    are isomorphisms for $i-p\ge -m+1$. We will show $X$ is $K_m$-regular. By (\ref{eqn:K-regularity-LES}) and the decomposition of cyclic homology, we need to show:
    \begin{enumerate}
        \item $\HH^{-m+1+2p}(X, \tilde{C}_jL\Omega_{X/\QQ}^{\le p})\xto{\sim} \HH^{-m+1+2p}_{cdh}(X,\tilde{C}_j\DB_{X/\QQ}^{\le p})$ are isomorphisms for all $p\ge 0$.
        \item $\HH^{-m+2p}(X, \tilde{C}_jL\Omega_{X/\QQ}^{\le p})\xto{\sim} \HH^{-m+2p}_{cdh}(X,\tilde{C}_j\DB_{X/\QQ}^{\le p})$ are surjective for all $p\ge 0$.
    \end{enumerate}

    Analyzing the spectral sequences
    \[E_1^{ij}=\Omega^i_{F/k}\otimes \tilde{C}_j\HH^j(X, L\Omega_{X/F}^{\le p-i})\implies \tilde{C}_j\HH^{i+j}(X, L\Omega_{X/\QQ}^{\le p})\]
    \[E_1^{ij}=\Omega^i_{F/k}\otimes \tilde{C}_j\HH^j(X, \DB_{X/F}^{\le p-i})\implies \tilde{C}_j\HH^{i+j}(X, \DB_{X/\QQ}^{\le p})\]
    shows that it is enough to show
    \begin{enumerate}
        \item $\HH^{-m+1+2p}(X, \tilde{C}_jL\Omega_{X/F}^{\le p})\xto{\sim} \HH^{-m+1+2p}_{cdh}(X,\tilde{C}_j\DB_{X/F}^{\le p})$ are isomorphisms for all $p\ge 0$.
        \item $\HH^{-m+2p}(X, \tilde{C}_jL\Omega_{X/F}^{\le p})\xto{\sim} \HH^{-m+2p}_{cdh}(X,\tilde{C}_j\DB_{X/F}^{\le p})$ are surjective for all $p\ge 0$.
    \end{enumerate}
    By definition of $\tilde{C}_j$, it suffices to show
    \begin{enumerate}
        \item $\HH^{-m+1+2p}(X, L\Omega_{X/F}^{\le p})\xto{\sim} \HH^{-m+1+2p}(X,\DB_{X/F}^{\le p})$ are isomorphisms for all $p\ge 0$.
        \item $\HH^{-m+1+2p}(X\times \AA^j, L\Omega_{X\times \AA^j/F}^{\le p})\xto{\sim} \HH^{-m+1+2p}(X\times \AA^j,\DB_{X\times \AA^j/F}^{\le p})$ are isomorphisms for all $p,j\ge 0$.
        \item $\HH^{-m+2p}(X\times \AA^j, L\Omega_{X\times \AA^j/F}^{\le p})\xto{\sim} \HH^{-m+2p}(X\times \AA^j,\DB_{X\times \AA^j/F}^{\le p})$ are surjective for all $p,j\ge 0$.
    \end{enumerate}
    Part (1) follows by assumptions. For (2) and (3), analyzing the spectral sequences
    \[E_1^{ij}= \HH^j(p^*L\Omega^i_{X/F}\otimes \Omega_{X\times \AA^j/X}^{\le p-i})\implies \HH^{i+j}(L\Omega_{X\times \AA^j/F}^{\le p})\]
    \[E_1^{ij}= \HH^j(p^*\DB^i_{X/F}\otimes\Omega_{X\times \AA^j/X}^{\le p-i})\implies \HH^{i+j}(\DB_{X\times \AA^j/F}^{\le p})\]
    we see that it suffices to show
    \begin{enumerate}
        \item $\HH^{-m+1+2p-k}(p^*L\Omega_{X/F}^k\otimes \Omega_{X\times \AA^j/X}^{\le p-k})\to \HH^{-m+1+2p-k}(p^*\DB_{X/F}^k\otimes \Omega_{X\times \AA^j/X}^{\le p-k})$ are isomorphisms for all $p\ge 0$ and $k\le p$.
        \item $\HH^{-m+2p-k}(p^*L\Omega_{X/F}^k\otimes \Omega_{X\times \AA^j/X}^{\le p-k})\to \HH^{-m+2p-k}(p^*\DB_{X/F}^k\otimes \Omega_{X\times \AA^j/X}^{\le p-k})$ are surjective for all $p\ge 0$ and $k\le p$.
    \end{enumerate}
    By induction using long exact sequences associated to
    \[\Omega_{X\times \AA^j/X}^{p-k}[-p+k]\to \Omega_{X\times \AA^j/X}^{\le p-k}\to \Omega_{X\times \AA^j/X}^{\le p-k-1}\xto{+1},\]
    it suffices to show 
    \begin{enumerate}
        \item $\HH^{-m+1+p}(L\Omega_{X/F}^k)\to \HH^{-m+1+p}(\DB_{X/F}^k)$ are isomorphisms for all $p\ge 0$ and $k\le p$.
        \item $\HH^{-m+p}(L\Omega_{X/F}^k)\to \HH^{-m+p}(\DB_{X/F}^k)$ are surjective for all $p\ge 0$ and $k\le p$.
    \end{enumerate}
    We have (1) by assumption, and (2) follows by Proposition \ref{prop:surjectivity-over-C}.    
\end{proof}

As an application, we construct an example that shows local $K_m$-regularity does not imply $K_m$-regularity in general.

\begin{example}\label{ex:locally-K-regular-does-not-imply-K-regular}
    For this example, we assume all varieties are defined over $\CC$, and we write $\DB_X^p$ for $\DB^p_{X/\CC}$.

    Let $X$ be any projective surface with $\H^1\DB_{X}^0\simeq \CC$ (for example, the projective cone over a curve of genus $2$), and $Y$ be a curve of genus $g\ge 1$, so that $H^1(Y,\O_Y)\cong \CC$. Take $Z=X\times Y$. Then there is an exact triangle
    \[\O_Z\to \DB_Z^0\to \O_{Y}[-1]\xto{+1}.\]
    Since $\H^2\DB_Z^0=0$, we see that $Z$ is locally $K_{-2}$-regular. However $Z$ is not $K_{-2}$-regular by Theorem \ref{thm:CHWW-projective}, because 
    \[\HH^2(\O_Y[-1])\simeq H^1(\O_Y)\neq 0.\]
\end{example}

For this reason, we do not expect pre-$m$-Du Bois varieties to be $K_{-d+m+2}$-regular in general. We summarize the relationships between these concepts when $m=0$ as follows:
\[\begin{tikzcd}
	{\text{Du Bois}} & {\text{$K_{-d+2}$-regular}} & {\text{$K_{-d+1}$-regular}} \\
	& {\text{locally $K_{-d+2}$-regular}} & {\text{locally $K_{-d+1}$-regular}}
	\arrow["{/}"{marking, allow upside down}, Rightarrow, from=1-1, to=1-2]
	\arrow[Rightarrow, from=1-1, to=1-3, bend left = 20]
    \arrow[Rightarrow, from=1-1, to=2-2]
	\arrow[Rightarrow, from=1-2, to=1-3]
	\arrow["{/}"{marking, allow upside down}, Rightarrow, from=2-2, to=1-2]
	\arrow[Rightarrow, from=2-2, to=2-3]
	\arrow["{/}"{marking, allow upside down}, Rightarrow, from=2-3, to=1-3]
\end{tikzcd}\]

We conclude with the following example, which shows that Proposition \ref{prop:relation-general}(2) need not hold if $X$ is not affine, and has higher dimensional singular locus.
\begin{example}\label{ex:pre-DB-but-not-K-1-regular}
    We construct a 3-fold $X$ that is $K_{-1}$-regular, but not pre-$0$-Du Bois. It is non-affine with 1-dimensional singular locus. For this example, we assume all varieties are defined over $\CC$, and we write $\DB_X^p$ for $\DB^p_{X/\CC}$.

    Let $C$ be a genus 2 curve in $\PP^2$, and let $S=C\times \PP^1$. Denote by $\pi: S\to \PP^1$ the projection map. The line bundle
    \begin{align*}
        L &= \omega_S\otimes (\pi^*\omega_{\PP^1}(1))^{\vee} \\
        &= \omega_{S/\PP^1}\otimes (\pi^*\O_{\PP^1}(1))^{\vee}
    \end{align*}
    is relatively ample over $\PP^1$, so we can consider the relative affine cone 
    \[Y = {\bf Spec}_{\PP^1} \bigoplus_{m\ge 0} \pi_* L^m\]
    over $S$ defined by $L$. One checks that the arguments in \cite{K-cones} computing the Du Bois complexes of cones carry over to the relative case, giving that
    \[\H^1\DB_Y^0 \simeq \bigoplus_{m\ge 1} R^1 \pi_* L^m.\]
    is a sheaf supported on $Y_{\sing}\simeq \PP^1$. We have $R^1\pi_* L \simeq \O_{\PP^1}(-1)$ by Grothendieck duality, and $R^1\pi_* L^m =0$ by Kodaira vanishing. It follows that
    \[\H^1\DB_Y^0 \simeq \O_{\PP^1}(-1).\]
    Similarly, one checks that
    \[\H^1\DB_Y^1 \simeq \O_{\PP^1}(-1).\]
    Now, let $X$ be a projectivization of $Y$ that is smooth away from the singular points of $Y$. Then, we have
    \[\H^1\DB_X^0 \simeq \H^1\DB_X^1 \simeq \O_{\PP^1}(-1).\]
    In particular, $X$ is not pre-$0$-Du Bois. We claim that $X$ is $K_{-1}$-regular. Indeed, by Theorem \ref{thm:CHWW-projective}, this is equivalent to the following maps being isomorphisms: 
    \[\HH^3(\O)\xto{\sim} \HH^3(\DB_X^0)\]
    \[\HH^2(\O)\xto{\sim} \HH^2(\DB_X^0)\]
    \[\HH^3(\LL_X)\xto{\sim} \HH^3(\DB_X^1)\]
    Since the cone of the map $\O_X\to \DB_X^0$ is $\O_{\PP^1}(-1)[-1]$, which has no cohomology, we have the two isomorphisms. The cone of the map $\LL_X\to \DB_X^1$ is $\O_{\PP^1}(-1)[-1]\oplus C$, where the complex $C$ is supported on the 1-dimensional subvariety $X_{\sing}$, and in cohomological degrees $\le 0$. Analyzing the spectral sequence
    \[E_2^{ij} = H^i(\H^j C)\implies \HH^{i+j}(C)\]
    gives the last isomorphism.
\end{example}

\section{Further questions}
\noindent (1) As noted in \cite{K-DB}, being $K_m$-regular also implies \textit{strict $(m-1)$-Du Bois}, i.e. the maps
    \[\Omega_{X/\CC}^p\to \DB_{X/\CC}^p\]
    are isomorphisms for $p\le m-1$. Since there is no known example of a variety with strict-$1$-Du Bois singularities that is not a local complete intersection, this leads to the following question:
    \begin{question}
        Is there a $K_2$-regular variety that is not a local complete intersection?
    \end{question}
   \medskip 
\noindent (2) We showed in Theorem \ref{intro-thm:regularity} that $K_2$-regular affine local complete intersections are smooth. We do not know if this is the case when the variety is not necessarily affine.
    \begin{problem}\label{ques:K2-reg-non-affine}
        Give examples of singular non-affine varieties that are $K_2$-regular.
    \end{problem}

    One approach could be to construct projective varieties that are $K_2$-regular using Theorem \ref{thm:CHWW-projective}.

    \medskip

\noindent (3) The notion of \textit{higher-$F$-injectivity} is introduced in \cite{higher-F-inj} as an analogue of higher Du Bois singularities in positive characteristic, extending the well-known notion of $F$-injectivity (analogous to the Du Bois singularities for complex algebraic varieties). One can ask 
    \begin{question}
        Is there any relationship between higher $F$-injectivity and $K$-regularity in positive characteristic?
    \end{question}

\medskip
\noindent (4) In Section \ref{sec:DB-table}, we classified all possible Du Bois tables in dimension $2$. We ask if one can extend this classification to higher dimensions. 
\begin{question}
    What are all possible constraints in the Du Bois table in higher dimensions?
\end{question}

For threefolds over $\QQ$, the Du Bois table takes the form
\[
\begin{tabular}{|c c c c}
    $b^{0, 2}$ & $b^{1,2}$ & 0 &0 \\
    $b^{0, 1}$ & $b^{1,1}$ & $b^{2,1}$ &0 \\
    \hline
\end{tabular}
\]
Using the spectral sequence argument as in Lemma \ref{lemma:DB-table-constraint}, one can show that 
    \[b^{0,2}\ge b^{1,2} \quad\quad  b^{0,2}-b^{1,2}\ge b^{2,1}-b^{1,1}.\]
The question amounts to asking whether there are further relations among the entries in the Du Bois table. By the main theorem of \cite{K-Bass}, answers to this question will in particular give answers to Bass' question.

\end{document}